\definecolor{colorJRblue}{rgb}{0.,0.,1.}
\definecolor{colorJRred}{rgb}{1.,0.,0.}
\newtheorem{hypothesis}{Hypothesis}
\newtheorem{remark}{Remark}
\newenvironment{notation}{\vspace{2mm}\textsc{Notation.\hspace{3mm}}\it}{\vspace{2mm}}
\renewcommand{\Re}{\mathrm{Re}}
\renewcommand{\Im}{\mathrm{Im}}
\newcommand{\Cos}{\mathrm{Cos}}
\newcommand{\rmred}{\mathrm{red}}
\newcommand{\rmf}{\mathrm{f}}
\newcommand{\rmt}{\mathrm{t}}
\newcommand{\actI}{I}
\newcommand{\virI}{J}
\newcommand{\rmo}{\mathrm{o}}
\newcommand{\rmb}{\mathrm{b}}
\newcommand{\rmL}{\mathrm{L}}
\newcommand{\rmE}{\mathrm{E}}
\newcommand{\rmP}{\mathrm{P}}
\newcommand{\om}{\omega}
\newcommand{\eps}{\varepsilon}
\newcommand{\couplin}{\mathfrak{c}}
\newcommand{\coupnl}{\mathfrak{d}}
\newcommand{\rmi}{\mathrm{i}}
\newcommand{\calS}{\mathcal{S}}
\newcommand{\calU}{\mathcal{U}}
\newcommand{\eqa}{\zeta}
\newcommand{\eqb}{\xi}
\newcommand{\calY}{\mathcal{Y}}
\newcommand{\calR}{\mathcal{R}}
\newcommand{\Proj}{\mathrm{Proj}}
\newcommand{\calB}{\mathcal{B}}
\newcommand{\calC}{\mathcal{C}}
\newcommand{\calG}{\mathcal{G}}
\newcommand{\calL}{\mathcal{L}}
\newcommand{\calM}{\mathcal{M}}
\newcommand{\calO}{\mathcal{O}}
\newcommand{\calT}{\mathcal{T}}
\newcommand{\calV}{\mathcal{V}}
\newcommand{\calW}{\mathcal{W}}
\newcommand{\dblx}{{x \choose \hat x}}
\newcommand{\dy}{\mathrm{d}y}
\newcommand{\Id}{\mathrm{Id}}
\newcommand{\N}{\mathbb{N}}
\newcommand{\rmc}{\mathrm{c}}
\newcommand{\rmd}{\mathrm{d}}
\newcommand{\rme}{\mathrm{e}}
\newcommand{\rms}{\mathrm{s}}
\newcommand{\rmu}{\mathrm{u}}
\newcommand{\R}{\mathbb{R}}
\newcommand{\Rg}{\mathrm{Rg}}
\newcommand{\spec}{\mathrm{spec}}
\def\enum{\setcounter{Lcount}{1}
\begin{list}{\arabic{Lcount}.}{\usecounter{Lcount}\leftmargin=1.2em}}
\numberwithin{remark}{section}
\newcounter{Lcount}
\begin{document}

\title{Lyapunov-Schmidt Reduction for Unfolding Heteroclinic Networks of Equilibria and\\ Periodic Orbits with Tangencies}

\author{Jens D.M. Rademacher\thanks{Centrum Wiskunde en Informatica, Science Park 123, 1098 XG Amsterdam, the Netherlands, rademach@cwi.nl [\today]
}}

\maketitle

\begin{abstract}
This article concerns arbitrary finite heteroclinic networks in any
phase space dimension whose vertices can be a random mixture of
equilibria and periodic orbits. In addition, tangencies in the
intersection of un/stable manifolds are allowed. The main result is a
reduction to algebraic equations of the problem to find all solutions
that are close to the heteroclinic network for all time, and their
parameter values. A leading order expansion is given in terms of the
time spent near vertices and, if applicable, the location on the
non-trivial tangent directions. The only difference between a periodic
orbit and an equilibrium is that the time parameter is discrete for a
periodic orbit. The essential assumptions are hyperbolicity of the
vertices and transversality of parameters. Using the result,
conjugacy to shift dynamics for a generic homoclinic orbit to a
periodic orbit is proven. Finally, equilibrium-to-periodic orbit
heteroclinic cycles of various types are considered.
\end{abstract}

\section{Introduction}\label{s:intro}

Heteroclinic networks in ordinary differential equations organise the
nearby dynamics in phase space for closeby parameters. They thus act
as organising centres and explain qualitative properties of solutions,
and predict variations upon parameter changes. This makes heteroclinic
networks a valuable object in studies of models for applications. When
all vertices in the network are equilibria much about such
bifurcations is known. Recently, heteroclinic networks whose vertices
can also be periodic orbits have found increasing attention. 

This article concerns the unfolding of finite heteroclinic networks consisting of hyperbolic equilibria and periodic orbits in an ordinary differential equation
\begin{equation}\label{e:ode}
\frac{\rmd}{\rmd x}u(x) = f(u(x);\mu),
\end{equation}
with $x\in \R$, $u(x)\in \R^n$ and parameter $\mu\in \R^d$ for arbitrary $n$ and sufficiently large $d$.

Any solution that remains close to the heteroclinic network of \eqref{e:ode} assumed at $\mu=0$ for all time can be cast in terms of its itinerary in the heteroclinic network. See Figure~\ref{f:2-hom} for a simple example. For any  itinerary, bifurcation equations will be derived for the locus of parameters of all corresponding solutions to \eqref{e:ode}. 
The idea to formulate the unfolding in this way is borrowed from previous studies of heteroclinic chains of \emph{equilibria}  \cite{Lin90,bjorndiss}. 

Bifurcation studies from heteroclinic chains with equilibria mainly concerned homoclinic orbits and generated a huge amount of literature, see, e.g., \cite{chow,denghom,homkraus,knobtang,Lin90,bjorndiss,Shilnikov,vanfie} just to name a few, and heteroclinic loops between two equilibria, see, e.g., \cite{bykov,chow1,deng,glenSpa,kokubu,Lin90,Shilnikov,zhu1,zhu}. Heteroclinic cycles with periodic orbits have found increasing attention recently, see, e.g., \cite{becks,krock,KrausOlde,KrausRiess,myhombif,Riess,sieber,simpson}. 

The main new contributions of the present work are rigorous results allowing for periodic orbits in general heteroclinic networks, for tangent heteroclinic connections, and to formulate the bifurcation equations in a general form that can be used as the basic building block for a specific study.
It is hoped that this makes the results useful for readers with applications to specific cases in mind.

\begin{figure}
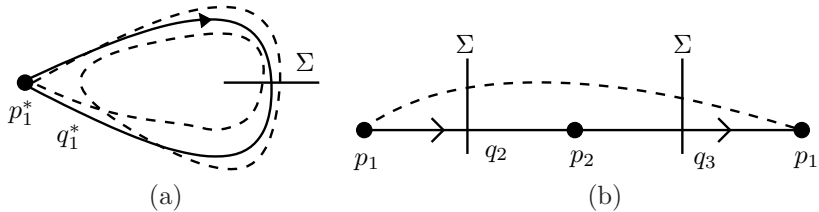

\begin{center}
\begin{tabular}{cc}
\input{2-hom.pstex_t}
& \input{2-hom-b.pstex_t}\\
(a) & (b)
\end{tabular}
\caption{Sketch of an itinerary for a 2-homoclinic orbit. (a) The homoclinic orbit $q_1^*$ (solid) and its asymptotic state $p_1^*$ at $\mu=0$, and a 2-homoclinic orbit (dashed). The Poincar\'e-section $\Sigma$ is just for orientation here. (b) Schematic plot of the itinerary (solid) for a 2-homoclinic orbit (dashed) where $q_2=q_3=q_1^*$, $p_1=p_2=p_1^*$.  }
\label{f:2-hom}
\end{center}
\end{figure}

A large field of applications are travelling waves in evolutionary partial differential equations in one space dimension whose profiles solve an equation of the form \eqref{e:ode}, but also for instance Laser models are often reduced to this form. There is a very large amount of such analytic and numerical studies in the literature, e.g., \cite{conley,KrausOlde,bar,langer, knobport,sanTAMS,ssglue,sieber,simpson} to hint at some.

In the applied literature such bifurcation equations are frequently derived formally by a geometric decomposition in terms of local and global maps, e.g., \cite{krock,glenSpa,knobport}. The justification in particular of the local map is an issue and, if linear, requires non-resonance conditions on eigenvalues or else dimension dependent normal form computations. Other issues are the form of parameter dependence and the persistence of solutions upon inclusion of the higher order terms of the original vector field. These problems do not arise in the approach taken here, and the results can provide a rigourous foundation of formal reductions.

The reduction to bifurcation equations in this paper is motivated by the so-called `Lin-method' described in \cite{Lin90}, which is a Lyapunov-Schmidt reduction for boundary value problems of the itinerary. This method has been used and modified in a number of ways and contexts for equilibria, e.g., \cite{knobdiscr,knobtang,vanfie}. Tangent intersection of stable and unstable manifolds have been considered mostly for homoclinic orbits, i.e., homoclinic tangencies, a paradigm of chaotic dynamics; Lin's method in this context has been used in \cite{knobtang}. Periodic orbits introduce technical complications and for Lin's method these have been overcome in \cite{myhombif} and, using Poincar\'e-maps, in \cite{Riess}. Transversality studies with respect to parameters in related cases were done in \cite{HaleLin}. An ergodic theory point of view is taken in \cite{Bam94,Diaz,Labarca, LaSa97,MorPac98, PaRo93}, and further papers by these authors, looking for instance at properties of non-wandering sets. More recently \cite{becks} treated periodic orbits in a very promising way using Fenichel-coordinates. 

Here \cite{myhombif} is used as a starting point, and equilibria or periodic orbits as vertices are treated in an essentially unified manner. Symmetries or conserved quantities are not used, but a generic setting is assumed. In contrast to \cite{myhombif}, winding numbers of heteroclinic sets are not considered, and the underlying heteroclinic network is held fixed. Together with \cite{myhombif} this exposition is self-contained, but somewhat technical, and parts of \cite{myhombif} have to be repeated and improved in order to track higher order terms in this extended setting. The precise statement of the main result Theorem~\ref{t:2} can only be given rather late after a number of preparatory steps, notation and definitions. In particular, this includes \S\ref{s:coord} where we obtain suitable coordinates near the vertices. We next describe the main result, and refer to \S\ref{s:bifana} for sample applications.

\subsection{Description of the main result}\label{s:maindescr}

For a chosen itinerary the method is a Lyapunov-Schmidt reduction which yields algebraic equations that relate system parameters $\mu$ to certain geometric characteristics $L_j$, $v_j$ at each heteroclinic connection $q_j$ that the solution follows, perhaps repeatedly, and which connects vertex $p_{j-1}$ to $p_j$.  The time spent between Poincar\'e sections $\Sigma_{j-1}$ at $q_{j-1}(0)$, and $\Sigma_j$ at $q_j(0)$ is $2L_j$ for $L_j\in [L^*,\infty)$ if $p_j$ is an equilibrium. If $p_j$ is a periodic orbit, this time is in general only approximately $2L_j$ since we normalise $L_j\in \{\ell T_j/2 : \ell\in \N,\, \ell T_j/2\geq L^*\}$ for the minimal period $T_j$ of vertex $j$.  For tangent heteroclinic connections or more than one-dimensional heteroclinic sets, un/stable manifolds of $p_{j-1}$ and $p_j$ have a more than one-dimensional common tangent space at $q_j(0)$, and $v_j$ are the coordinates on that space, except the flow direction. The location of $L_j, v_j$ in the itinerary is illustrated in Figure~\ref{f:ext-chain}. 

The system parameters $\mu\in \R^d$ can generically be assumed to unfold each heteroclinic connection by a separate set of parameters $\mu_j^*\in\R^{d_j}$ which, however, must coincide at repeated connections. Here $d_j$ is the codimension of the $j$th connection and $d=\sum_j d_j$  \emph{without} repeating the same connection in the sum. The geometric characteristics couple these parameter sets, but to leading order only to the nearest neighbours $j\pm1$. If $d_j=0$ for all $j$, then all heteroclinic connections are transverse, and the result proves the existence of solutions for any itinerary, and an expansion for the coordinates in the Poincar\'e-sections. 
Otherwise, an expansion of $\mu_j^*$ in terms of $v_r$, $L_r$ is provided as described next. 

Let $\kappa_j^{\rmu/\rms}$ and $\sigma_j^{\rmu/\rms}$ be the real and imaginary parts of the leading un/stable eigenvalue or Floquet exponent at vertex $j$. For $\gamma\in\R^r$ let $\Cos(\gamma) = (\cos(\gamma_1),\ldots,\cos(\gamma_r))$. For all $j$ with $d_j\geq 1$ and for sufficiently large $\min_r\{L_r\}$ and small $\sup_r\{|v_r|\}$, there exist $\beta_j, \gamma_j \in\R^{d_j}$, and linear maps $\beta_j', \gamma_j', \beta_j'', \gamma_j'', \eqa_j, \eqb_j, \eqa_j', \eqb_j', \eqa_j'', \eqb_j''$, as well as quadratic maps $\calT_j$ (zero if $t_j=0$), so that a solution follows the chosen itinerary, if and only if
\begin{equation}\label{e:intro}
\begin{array}{rl}
\mu_j^* &=  \calT_j(v_j) +
\rme^{-2\kappa_j^\rmu L_j}\Cos(2\sigma_j^\rmu L_j + \beta_j^*(v_{j+1},L_{j+1})) \eqa_j^*(v_{j+1}, L_{j+1}) \\ 
&+ \rme^{-2\kappa_{j-1}^\rms L_{j-1}} \Cos(2\sigma_{j-1}^\rms L_{j-1} + \gamma_j^*(v_{j-1}, L_{j-2})\eqb_j^*(v_{j-1}, L_{j-2}) + \calR_j.
\end{array}
\end{equation}
Here $\mu_j^* = \mu_{j'}^*$ whenever the $j$ and $j'$ in the itinerary correspond to the same actual heteroclinic connection. 
The coupling to the nearest neighbours is given by
\begin{equation}\label{e:intro2}
\begin{array}{rl}
\beta_j^*(v,L) &= \beta_j + \beta_j'v +\beta_j'' B_{j+1}^\rmu(L), \\
\gamma_j^*(v,L) &= \gamma_j + \gamma_j'v +\gamma_j'' B_{j-2}^\rms(L), \\
\eqa_j^*(v,L) &= \eqa_j + \eqa_j'v +\eqa_j'' B_{j+1}^\rmu(L), \\
\eqb_j^*(v, L) &= \eqb_j +\eqb_j' v + \eqb_j'' B_{j-2}^\rms(L), \\
B_r^{\rms/\rmu}(L) &= \rme^{-2\kappa_r^{\rms/\rmu}L}\Cos(2\sigma_r^{\rms/\rmu} L + \beta^{\rms/\rmu}_r)\eqa^{\rms/\rmu}_r,
\end{array}
\end{equation}
where $\beta_r^{\rms/\rmu}\in\R^{d_r}$ and $\eqa_r^{\rms/\rmu}$ are linear maps. 

Negative Floquet multipliers, i.e., negative eigenvalues of the period map, have Floquet exponent with imaginary part $\pi/T_j$. Since $L = \ell T_j/2$, $\ell\in\N$, the argument in the cosine terms is $\pi\ell$ which generates an oscillating sign as $\ell$ is incremented. 

Note that if the itinerary has repetitions, then $v_j, L_j$ have to satisfy solvability conditions from repeating the corresponding equations. Each repetition yields new parameters $v_j$$, L_j$, but all other quantities in \eqref{e:intro} are the same if the underlying heteroclinic connections is the same.

\begin{figure}
\begin{center}
\input{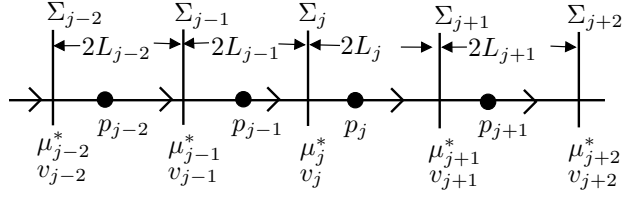}
\caption{Schematic illustration of an itinerary near index $j$, and the location of $\mu_j^*, v_j, L_j$. The arrows indicate the flow direction, connections can be copies of the same actual heteroclinic connection.}
\label{f:ext-chain}
\end{center}
\end{figure}

The significance of \eqref{e:intro} lies in the order of the remainder term $\calR_j$, which, for certain $\delta_j^{\rmu/\rms}>0$, and arbitrary $\delta>0$, is given by
\begin{equation*}
\begin{array}{rl}
&|v_j|^3 + \rme^{-2\kappa_j^\rmu L_j}\left(\rme^{-\delta_j^\rmu L_j} + |v_{j+1}|(\rme^{(\delta-\kappa^\rmu_{j+1})L_{j+1}} + |v_{j+1}|) + \rme^{(\delta-3\kappa_{j+1}^\rmu) L_{j+1}}\right)\\
&+ \rme^{-2\kappa_{j-1}^\rms L_{j-1}}\left( \rme^{-\delta_{j-1}^\rms L_{j-1}}+ |v_{j-1}|(\rme^{(\delta-\kappa^\rms_{j-2})L_{j-2}} + |v_{j-1}|) + \rme^{(\delta-3\kappa_{j-2}^\rms) L_{j-2}}\right).
\end{array}
\end{equation*}
In particular, $\calR_j$ is higher order with respect to at least one of the cosine terms if $L_j\sim L_r$, $r=j-1,j-2,j+1$. 

The application of the above result to a Shil'nikov-homoclinic orbit to an equilibrium yields the same bifurcation equations as in \cite{Lin90}, and as in that paper, many of the seminal results by Shil'nikov \cite{Shilnikov} follow from leading order analyses. Note that the resonant case $\kappa_j^\rmu = \kappa_{j-1}^\rms$ can be treated by the above result as well. See \cite{chow} for resonance at homoclinic bifurcations.

\begin{remark}\label{r:lead}
Under the `flip' condition $\mathrm{Rank}(\eqa_j)=0$ or $\mathrm{Rank}(\eqb_j)_j=0$ for the leading eigenvalue,  the next leading terms need to be taken into account for an unfolding. Viewing $\eqa_j$, $\eqb_j$ as parameters, the bifurcation of solutions can be understood from Theorem~\ref{t:2} if $2\kappa_j^\rmu < 2\kappa_{j-1}^\rms + \delta_{j-1}^\rms$ for $\eqa_j=0$ or $2\kappa_{j-1}^\rms < 2\kappa_{j}^\rmu + \delta_{j}^\rmu$ for $\eqb_j=0$. To overcome the barriers involving $\rho_{j-1}^\rms$ and $\rho_j^\rmu$ to the next order eigenvalues requires a more refined setup beyond the scope of this article. See \cite{bjorndiss} and also \cite{homkraus,flip,nii} for such considerations in case of equilibria applied to homoclinic bifurcations.
\end{remark}

\medskip
The main result unifies the treatment of equilibria and periodic orbits as vertices of a network: for the reduced equations the only difference between equilibria and periodic orbits is that $L_j$ is a semi-infinite interval for an equilibrium, but the above defined discrete infinite sequence for a periodic orbit. The discrete sequence essentially counts the number of rotations that the solution makes about the periodic orbit. Note that replacing a periodic orbit by an equilibrium has consequences for the codimensions of heteroclinic connections.

The reduced equations for a specific case can be determined in three steps. First, choose the itinerary of the solution type of interest. Second, determine the codimensions, including tangencies, of all visited heteroclinic connections. Third, copy the equations from Theorem~\ref{t:2} for each element in the itinerary with positive codimension, and remove geometric characteristics that do not occur according to the type of itinerary and tangencies.
In case of repetition in the itinerary, the locus of parameter values for the solutions should be found by analysing the arising algebraic solvability conditions (which can be highly non-trivial). Similarly, in case of tangencies, the locus of turning points or folds can be determined. 

To illustrate this and the applicability of the abstract results, some sample applications for specific heteroclinic networks are presented in \S\ref{s:bifana}. In particular, for a generic homoclinic orbit to a periodic orbit conjugacy to (suspended) shift dynamics is proven. 
In addition, equilibrium-to-periodic orbit heteroclinic cycles of various types are considered, and 2-homoclinic orbits are studied for the first time in this context.

\medskip
Note that the main result separately concerns each solution type as encoded in each itinerary. This is suitable, for instance, when looking for the aforementioned travelling waves. In some cases a whole class of solutions or even the entire invariant set can be characterised directly. However, our results do not provide stability information of the bifurcating solutions or hyperbolicity of invariant sets, or other ergodic properties. See \cite{sanTAMS} for stability results in homoclinic bifurcations using Lin's method (where additionally PDE spectra are considered). As mentioned, the above result does not provide an expansion in general in the case of vanishing leading order terms  (`flip conditions').

\medskip
This paper is organised as follows. Section \ref{s:set} contains details about the setting and some preparatory results. In \S\ref{s:coord} a suitable coordinate system is established for trajectories that pass near an equilibrium or periodic orbit or lie in un/stable manifolds. The main result is formulated and proven in \S\ref{s:bifeq}. Finally, \S\ref{s:bifana} contains sample bifurcation analyses and illustrate how to use the main result. 

\medskip \textbf{Acknowledgement.}
This research has been supported in part by NWO cluster NDNS+. The author thanks Bj\"orn Sandstede, Ale Jan Homburg and Alan Champneys for helpful discussions.

\section{Setting and Preparation}\label{s:set}

The basic assumption is that at $\mu=0$ \eqref{e:ode} possesses a finite heteroclinic network $\calC^*=(\calC_1^*,\calC_2^*)$ with vertices $p_i^*\in\calC_1^*$ being equilibria or periodic orbits, $i\in \actI_1$, and edges $q_i^*\in\calC_2^*$, $i\in \actI_2$, being heteroclinic connections. Rather than unfolding $\calC$ as a whole, we consider the following paths within $\calC$ separately. Here we set $\virI^\rmo:= \virI\setminus\min\virI$ (where $\min\virI=\emptyset$ if there is no minimum).

\begin{definition}
A possibly infinite set $\calC= \left( (p_j)_{j\in\virI},(q_j)_{j\in\virI^\rmo} \right)$ with  $p_j\in\calC^*_1$, $j\in\virI$ and $q_j\in\calC^*_2$, $j\in\virI^\rmo$, 
is called \emph{itinerary} if for all $j\in \virI^\rmo$ the edge $q_j$ is a heteroclinic connection from $p_{j-1}$ to $p_j$  (or homoclinic if $p_{j-1}=p_j$) and either $\virI=\{-\infty, \ldots 0,1\}$, $\virI=\{1,2,\ldots\infty\}$, $\virI=\mathbb{Z}$ or $\virI=\{1,\ldots,m\}$ for some $m>1$.

For ease of notation we say that a sequence $y_j$ of `objects' (numbers, vectors or maps given in the context) with $j\in\virI^\rmo$ has \emph{reducible indexing} (with respect to $\calC$) if $y_j=y_{j'}$ whenever $q_j=q_{j'}$ for $j,j'\in\virI^\rmo$.
\end{definition}

Note that an itinerary can cycle arbitrarily and perhaps infinitely long within the heteroclinic network viewed as a directed graph, and the labelling can differ from that in $\calC^*$. Any itinerary has a (possibly non-unique) reduced index set $\virI_\rmred\subset \virI$ so that $q_j\neq q_{j'}$ as well as $p_j\neq p_{j'}$ for $j\neq j'$, with $j,j'\in \virI_\rmred^\rmo:=\virI_\rmred\cap \virI^\rmo$, and $j,j'\in \virI_\rmred$, respectively. Associated to this is $\calC_\rmred =  \left( (p_j)_{j\in\virI_\rmred},(q_j)_{j\in\virI_\rmred^\rmo} \right) \subset \calC$,
which may not itself be an itinerary (though it contains one).

Let $\virI_\rmE\subset \virI$ be the index set of all equilibria $p_j$ in $\calC$ and $\virI_\rmP = \virI\setminus\virI_\rmE$ that of all periodic orbits. We set $\virI_\rmE^\rmo:=\virI_\rmE\cap \virI^\rmo$ and $\virI_\rmP^\rmo:=\virI_\rmP\cap \virI^\rmo$. Finally, $T_j>0$ denotes the minimal period of $p_j$ for $j\in \virI_\rmP$, and we set $T_j=0$ for $j\in \virI_\rmE$.

\medskip
In the following, unless noted otherwise, we consider an arbitrary fixed itinerary $\calC$. However, until \S\ref{s:bifeq} only neighbours $q_j$ and $q_{j-1}$ are relevant. 

\medskip
For $j\in \virI$ let $\tilde\Psi_j(x,0) = A_j(x)\rme^{F_jx}$ be the Floquet representation of the evolution of the linearization $\dot v=\partial_u f(p_j(x);0)v$ of \eqref{e:ode} in $p_j$. Here $\dot v=\rmd v/\rmd x$, and the matrices $A_j(x)$ satisfy $A_j(0)=\Id$, $A_j(x+T_j) = A_j(x)$ for $j\in \virI_\rmP$ and $x\in \R$, and $A(x)\equiv\Id$ for $j\in \virI_\rmE$ (in which case $F_j = \partial_u f(p_j;0)$).

The basic assumption about \eqref{e:ode} and the heteroclinic network is

\begin{hypothesis}\label{h:basic}
The vector field $f$ in \eqref{e:ode} is of class $C^{k+2}$ for $k\geq 1$ in $u$ and $\mu$. The equilibria or periodic orbits $p_i^*$, $i\in \actI_1$, are hyperbolic at $\mu=0$, i.e., for any $\calC$ the matrices $F_j$ have no eigenvalues on the imaginary axis, except for a simple eigenvalue at the origin (modulo $2\pi\rmi$) if $j\in\virI_\rmP$.
\end{hypothesis}

Here a simple eigenvalue has algebraic and geometric multiplicity one. Hypothesis~\ref{h:basic} implies that the spectrum $\spec(F_j)$ of $F_j$ has the un/stable gaps
\begin{eqnarray*}
\kappa^\rmu_j &:=& \min\left\{\Re(\nu) : 
\nu\in \spec(F_j), \Re(\nu)> 0\right\} > 0\\
\kappa^\rms_j &:=& -\max\left\{\Re(\nu) : 
\nu\in \spec(F_j), \Re(\nu)< 0\right\} > 0.
\end{eqnarray*}
Since $\calC^*_1$ is finite, the gaps $\kappa_j^{\rms/\rmu}$ are uniformly bounded from below in $j\in\virI$. For convenience we choose arbitrary $\kappa_j>0$, $j\in \virI$ with reducible indexing, such that $\kappa_j < \min\{\kappa_j^\rms, \kappa_j^\rmu\}$.
We also need the gap to the next leading eigenvalues/Floquet exponents. Let $\nu_r$, $r=1,\ldots,n$ be the eigenvalues of $F_j$ and define
\begin{eqnarray*}
\rho_j^\rms &:=& \min\{|\Re(\nu_r)|-\kappa_j^\rms : \Re(\nu_r)<0, \Re(\nu_r)\neq \kappa_j^\rms, r=1\ldots,n\},\\
\rho_j^\rmu &:=& \min\{\Re(\nu_r)-\kappa_j^\rmu : \Re(\nu_r)>0, \Re(\nu_r)\neq \kappa_j^\rmu, r=1\ldots,n\}.
\end{eqnarray*}

Leading stable eigenvalues of a matrix are those with the largest strictly negative real part, and leading unstable those with the smallest strictly positive real part. For the main result, we will assume that these leading eigenvalues are simple as expressed in the following hypotheses. 

\begin{hypothesis}\label{h:simplestable}
Consider the leading stable eigenvalues or Floquet exponents at $p_j$. Assume that this is either a simple real eigenvalue $\nu_j$ or a simple complex conjugate pair $\nu_j$, $\bar\nu_j$ with $\Im(\nu_j)\neq 0$.
\end{hypothesis}

\begin{hypothesis}\label{h:simpleunstable}
Consider the leading unstable eigenvalues or Floquet exponents at $p_j$. Assume that this is either a simple real eigenvalue $\nu_j$ or a simple complex conjugate pair $\nu_j$, $\bar\nu_j$ with $\Im(\nu_j)\neq 0$.
\end{hypothesis}

To emphasise where these hypotheses enter we will not assume them globally, which has the effect that a priori exponential rates for estimates are not $\kappa_j^{\rms/\rmu}$, but $\kappa_j^{\rms/\rmu}-\delta_j$ for an arbitrary $\delta_j>0$ due to possible secular growth. In the following $\delta_j$ denotes a priori an arbitrarily small positive number, which may vanish under Hypothesis~\ref{h:simplestable}, \ref{h:simpleunstable}.

Hence, for suitable $x_j\in[0,T_j)$ as well as asymptotic phases $\alpha_j\in [0,T_j)$ of $q_j$ with respect to $p_j$ we obtain the estimates (see, e.g., \cite{CodLev})
\begin{eqnarray}
|q_j(x) - p_j(x+\alpha_j)| &\leq& C \rme^{(\delta_j-\kappa^\rms_j) x}\;,\, x\geq 0\label{e:asyphase}\\
|q_{j+1}(x+x_j) - p_j(x+\alpha_j)| &\leq& C \rme^{(\delta_j-\kappa^\rmu_j) |x|}\;,\, x\leq 0,\nonumber
\end{eqnarray}
where $C>0$ depends only on $q_j, q_{j+1}$ and $\delta_j$. For $j\in \virI_\rmP$ the requirement \eqref{e:asyphase} of equal asymptotic phase for $q_j(0)$ and $q_{j+1}(x_j)$ determines $x_j$ up to multiples of $T_j$ and uniquely in $[0,T_j)$. For $j\in \virI_\rmE$ we have $p_j(x)\equiv p_j$ and set $\alpha_j=x_j=0$.

To distinguish in- and outflow at $p_j$ we denote $\hat q_j(x):=q_{j+1}(x+x_j)$ for $j-1\in\virI^\rmo$. 

Let $\Phi_j(x,y)$ denote the evolution of $\dot v=\partial_u f(q_j(x);0)v$ and $\hat\Phi_j(x,y)$ that of $\dot v=\partial_u f(\hat q_j(x);0)v$. Hyperbolicity of $p_j$ gives the following exponential dichotomies for $j\in \virI_\rmE^\rmo$ and trichotomies for $j\in\virI_\rmP^\rmo$ (see, e.g., \cite{myhombif}). 

\begin{notation}
Indices separated by one or more `slashes' as in $\kappa_j^{\rms/\rmu}$ indicate alternative choices for the statement with all these indices chosen equal at a time.
\end{notation}

There exist projections $\Psi_j^{\rms/\rmc/\rmu}(x)$, continuous in $x\geq 0$, and $\hat\Psi_j^{\rms/\rmc/\rmu}(x)$, continuous in $x\leq 0$, such that the following holds. Set $\Phi_j^{\rms/\rmc/\rmu}(x,y) :=  \Phi_j(x,y)\Psi_j^{\rms/\rmc/\rmu}(y)$ and $\hat\Phi_j^{\rms/\rmc/\rmu}(x,y) :=  \hat\Phi_j(x,y)\hat\Psi_j^{\rms/\rmc/\rmu}(y)$, respectively.

\begin{itemize}
\item For $j\in \virI_\rmP^\rmo$ : $\Rg(\Psi_j^\rmc(x)) = \mathrm{span}\left(
\frac{\rmd}{\rmd x}q_j(x) \right)$,
\item For $j\in \virI_\rmE^\rmo$ : $\Psi_j^\rmc\equiv \hat\Psi_j^\rmc\equiv 0$,
\item The projections are complementary: $\Psi_j^\rms + \Psi_j^\rmu + \Psi_j^\rmc \equiv \Id$, $\Psi^\rms(\Psi^\rmu + \Psi_j^\rmc)\equiv 0$, $\Psi^\rmu(\Psi^\rms + \Psi_j^\rmc)\equiv 0$, $\Psi^\rmc(\Psi^\rms + \Psi_j^\rmu)\equiv 0$; analogous for $\hat\Psi_j^{\rms/\rmc/\rmu}$,
\item The spaces $\Rg(\Psi^\rms(x))$ and $\Rg(\hat\Psi^\rmu(x))$ are unique, the spaces $\Rg(\Psi^\rmu(x))$ and $\Rg(\hat\Psi^\rms(x))$ arbitrary complements such that the previous holds,
\item The projections commute with the linear evolution:\\ 
$\Phi_j^{\rms/\rmc/\rmu}(x,y) =  \Psi_j^{\rms/\rmc/\rmu}(x)\Phi_j(x,y)$ and $\hat\Phi_j^{\rms/\rmc/\rmu}(x,y) =  \hat\Psi_j^{\rms/\rmc/\rmu}(x)\hat\Phi_j(x,y)$,
\item They distinguish un/stable and center direction: there is $C>0$ depending on $\delta_j>0$ and $q_j$, $\hat q_j$ such that for all $u\in \R^n$
\begin{eqnarray}
|\Phi_j^\rms(x,y)u| &\leq& C \rme^{(\delta_j-\kappa_j^\rms)|x-y|}|u|\;,\; x\geq y \geq 0,\nonumber\\
|\Phi_j^\rmu(x,y)u| &\leq& C \rme^{(\delta_j-\kappa_j^\rmu)|x-y|}|u|\;,\; y\geq x\geq 0,\nonumber\\
|\Phi_j^\rmc(x,y)u| &\leq& C |u|\;,\; x, y \geq 0,\label{e:trichest}\\
|\hat\Phi_j^\rmu(x,y)u| &\leq& C \rme^{(\delta_j-\kappa_j^\rmu)|x-y|}|u|\;,\; x\leq y \leq 0,\nonumber\\
|\hat\Phi_j^\rms(x,y)u| &\leq& C \rme^{(\delta_j-\kappa_j^\rms)|x-y|}|u|\;,\; y\leq x\leq 0,\nonumber\\
|\hat\Phi_j^\rmc(x,y)u| &\leq& C |u|\;,\; x, y \leq 0.\nonumber
\end{eqnarray}
\end{itemize}

We denote the un/stable and center spaces, respectively, by 
\[
E_j^{\rmu/\rms/\rmc}(x) := \Rg(\Psi_j^{\rmu/\rms/\rmc}(x))\,,\; \hat E_j^{\rmu/\rms/\rmc}(x) := \Rg(\hat\Psi_j^{\rmu/\rms/\rmc}(x)).
\]

\begin{definition}
For a decomposition $E \oplus F = \R^n$ we denote by $\Proj(E,F):\R^n\to \R^n$ the unique projection with kernel $E$ and image $F$. \end{definition}

In order to link the trichotomies of the in- and outflow near $p_j$, we define 
\begin{eqnarray*}
P_j^\rms(L) &:=& \Proj(E_j^\rmu(L) + \hat E_j^\rmc(-L), \hat E_j^\rms(-L))\\
P_j^\rmu(L) &:=& \Proj(\hat E_j^{\rms\rmc}(-L), E_j^\rmu(L))\\
P_j^\rmc(L) &:=& \Proj(E_j^\rmu(L) + \hat E_j^\rms(-L),\hat E_j^\rmc(-L)).
\end{eqnarray*}
We also define the aforementioned sets of travel time parameters
\begin{equation*}
K_j(L) := 
\begin{cases}
[L,\infty) & \mbox{ for }j\in \virI_\rmE\\
\{\ell T_j/2 : \ell T_j/2\geq L, \ell\in\N\}& \mbox{ for }j\in\virI_\rmP.
\end{cases}
\end{equation*}

It is shown in \cite{myhombif}, Lemma 2, that there is $L^0>0$ such that for $L\in K_j(L^0)$ the $P_j^{\rms/\rmc/\rmu}(L)$ are complementary projections, $P_j^\rmc \equiv 0$ for $j\in \virI_\rmE^\rmo$, and the norms $|P_j^{\rms/\rmc/\rmu}(L)|$ are uniform in $L$. 

\medskip
In order to control the leading order terms in the bifurcations equations we make the following change of coordinates locally near all $p_j$, $j\in \virI_\rmred$. In the new `straight' coordinates the un/stable manifolds locally coincide with the un/stable eigenspaces of the linearization in $p_j$, respectively. For periodic orbits the strong un/stable fibers locally coincide with the un/stable eigenspaces. Since these are graphs over the eigenspaces and tangent at the equilibrium or periodic orbit this is straightforward. See e.g., (3.27) in \cite{bjorndiss}. However, as in \cite{bjorndiss} this change of coordinates is an obstacle to apply the method within the class of semilinear parabolic partial differential equations.
However, in \cite{becks} this problem has been circumvented in a way that should apply here as well.

To emphasise the effect of this coordinate change and to make the notation of estimates throughout the text more readable, we define for $j\in \virI$ and any $\delta_j>0$, and $\delta_j=0$ if explicitly mentioned, the terms

\begin{align*}
R_j &:= 
\begin{cases}
\rme^{(\delta_j-\kappa_j)L_j} \mbox{ a priori},\\
\rme^{(\delta_j-\kappa_j^\rmu)L_j}\mbox{ in straight coordinates},
\end{cases}\\
\hat R_j &:= 
\begin{cases}
\rme^{(\delta_j-\kappa_j)L_j} \mbox{ a priori},\\
\rme^{(\delta_j-\kappa_j^\rms)L_j}\mbox{ in straight coordinates},
\end{cases}
\end{align*}
and set $R_j=\hat R_j=0$ for $j\not\in\virI$.

\begin{notation} In the following we use the usual order notation $a=\calO(b)$ if there is a constant $C>0$ such that $|a|\leq C|b|$ for all large or small enough $b$ and norms as given in the context. In terms of $L_j$ this is always as $L_j\to\infty$. In a chain of inequalities for such order computations we allow the constant $C$ to absorb other constant factors and take maximum values of several constants without giving explicit notice.
\end{notation}

\medskip
The next lemma is the basis for estimating error terms in the following sections.

\begin{lemma}\label{l:proj-est} 
There exist $C>0$ and $L^1\geq L^0$ depending only on $q_j, \hat q_j$ and $\delta_j$ such that for $L_, L_j\in K(L^1)$ the following holds for all $j\in \virI^\rmo$.
\begin{enumerate}
\item  
$|P_j^{\rmc\rmu}(L_j) \Psi_j^{\rms}(L_j)| \leq C R_j\,,\quad |P_j^{\rms\rmc}(L_j) \hat\Psi_j^\rmu(-L_j)| 
\leq C \hat R_j$.
\item $\begin{cases}
|P_j^{\rmc\rmu}(L_j) (p_j(\alpha_j + L_j)-q_j(L_j))| 
 \leq C R_j\hat R_j,\\
|P_j^{\rms\rmc}(L_j) (p_j(\alpha_j-L_j)-\hat{q}_j(-L_j))| 
\leq C R_j\hat R_j.
\end{cases}$
\item The above holds under Hypothesis~\ref{h:simplestable}  with $\delta_j=0$ in $\hat R_j$ and under Hypothesis \ref{h:simpleunstable} with $\delta_j=0$ in $R_j$.
\item Under Hypothesis~\ref{h:simplestable} or~\ref{h:simpleunstable}, respectively, there are vectors $v^{\rmu/\rms}_j$ in the leading un/stable eigenspaces of $F_j$ such that
\begin{eqnarray*}
v^\rms_j \neq 0 &\Leftrightarrow& \limsup_{x\to \infty} \ln(q_j(x))/x = -\kappa_j^\rms,\\
v^\rmu_j \neq 0 &\Leftrightarrow& \limsup_{x\to -\infty} \ln(\hat q_j(x))/x = \kappa_j^\rmu,
\end{eqnarray*} 

and such that under Hypothesis~\ref{h:simplestable} and for any $\delta_j^\rms < \min\{\kappa_j^\rms,\rho_j^\rms\}$ we have
\[
\hat\Phi_j^\rms(0,-L)P_j^\rms(L)(p_j(\alpha_j+L) - q_j(L)) = \rme^{2F_j L}v^\rms_j + \calO(\rme^{-2(\kappa_j^\rms + \delta_j^\rms)L}),
\]

and under Hypothesis~\ref{h:simpleunstable} and for any $\delta_j^\rmu < \min\{\kappa_j^\rmu,\rho_j^\rmu\}$ we have
\[
\Phi_j^\rmu(0,L)P_j^\rmu(L)(p_j(\alpha_j-L) - \hat q_j(-L)) = \rme^{-2F_j L}v^\rmu_j + \calO(\rme^{-2(\kappa_j^\rmu +\delta_j^\rmu)L}).
\]
\end{enumerate}
\end{lemma}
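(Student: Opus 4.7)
I would prove the four parts in order, with earlier parts feeding into later ones and relying throughout on the trichotomy bounds \eqref{e:trichest} and the asymptotic phase estimate \eqref{e:asyphase}.

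For part (1), the observation is that both $E_j^\rms(L_j)$ and $\hat E_j^\rms(-L_j)$ converge to the stable spectral subspace of $F_j$ as $L_j\to\infty$, by exponential dichotomy roughness: a priori at rate $\rme^{(\delta_j-\kappa_j)L_j}$, reflecting the weaker of the two gaps. In straight coordinates the local stable manifold of $p_j$ coincides with the stable eigenspace, so $\Rg(\Psi_j^\rms(L_j))$ equals the eigenspace exactly for large $L_j$; only the angle between $\hat E_j^\rms(-L_j)$ and the eigenspace remains, which is governed by the backward convergence rate of $\hat q_j$ to $p_j$ and gives $R_j=\rme^{(\delta_j-\kappa_j^\rmu)L_j}$. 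The companion estimate is symmetric. Part (2) then follows by noting that $p_j(\alpha_j+L)-q_j(L)$ is asymptotically in the leading stable direction (its precise leading term being the content of part~(4)), so $P_j^{\rmc\rmu}(L)$ annihilates the dominant part up to the angle error from (1); the remaining magnitude contributes the second factor, giving the product $R_j\hat R_j$.

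Part (3) is a bookkeeping observation: the $\delta_j$ in $R_j$ and $\hat R_j$ is present only to absorb polynomial-in-$L$ prefactors produced by nontrivial Jordan blocks at the leading eigenvalues, and Hypotheses~\ref{h:simplestable} or~\ref{h:simpleunstable} rule these out.

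The centerpiece is part (4). Setting $w(x) := q_j(x) - p_j(x+\alpha_j)$ in the Floquet frame, one obtains $\dot w = F_j w + N(w,x)$ with $N(w,x) = \calO(|w|^2)$ and bounded/periodic coefficients, and $|w(x)| = \calO(\rme^{-\kappa_j^\rms x})$ by \eqref{e:asyphase}. Variation of constants, together with simplicity of the leading stable eigenvalue (Hypothesis~\ref{h:simplestable}, which forbids secular growth) and the gap $\rho_j^\rms$ to subdominant eigenvalues, produces the expansion $w(x) = \rme^{F_jx}v_j^\rms + \calO(\rme^{-(\kappa_j^\rms+\delta_j^\rms)x})$ for a unique $v_j^\rms$ in the leading stable eigenspace. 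Evaluating at $x=L$ and applying $\hat\Phi_j^\rms(0,-L)P_j^\rms(L)$ uses that in straight coordinates $\hat\Phi_j^\rms(0,-L)$ acts as $\rme^{F_jL}$ on the stable eigenspace near $p_j$, and that $P_j^\rms(L)$ is the identity on the leading stable component up to a part-(1) error, yielding $\rme^{2F_jL}v_j^\rms$ plus the stated remainder. The decay-rate equivalence is immediate from the expansion, since a vanishing $v_j^\rms$ would force $q_j$ to decay strictly faster than $\rme^{-\kappa_j^\rms x}$.

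The main obstacle I anticipate is the rate bookkeeping in part~(4): after two applications of stable evolution with the projection $P_j^\rms(L)$ inserted, the cumulative error must genuinely be $\rme^{-2(\kappa_j^\rms+\delta_j^\rms)L}$ rather than the worse $\rme^{-(2\kappa_j^\rms-\kappa_j^\rmu)L}$ that a naive composition of part~(1) with the magnitude $\rme^{-\kappa_j^\rms L}$ would give. This requires the straight-coordinate structure, so that near $p_j$ the stable linear dynamics literally is $\rme^{F_j\cdot}$, and a careful use of part~(1) so that the leading eigenvector $v_j^\rms$ propagates without contamination by cross-terms. For periodic orbits the discrete constraint $L\in K_j(L^0)$ must also be enforced throughout so that the phase $p_j(\alpha_j\pm L)$ is consistently positioned.
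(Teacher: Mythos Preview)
Your proposal is correct and matches the paper's approach closely for parts~(1)--(3): the paper also compares $E_j^\rms(L)$ and $\hat E_j^\rms(-L)$ to the spectral stable space $\Rg(\tilde\Psi_j^\rms(\pm L))$ of the linearization at $p_j$, uses dichotomy roughness for the a~priori rate, and observes that in straight coordinates $E_j^\rms(L)=\Rg(\tilde\Psi_j^\rms(L))$ exactly so only the $\hat q_j$-side error survives; part~(2) is then obtained exactly as you describe, via the graph property of the (strong) stable manifold and the projection replacement from part~(1).

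For part~(4) the paper does not argue from scratch but defers to Lemma~10 and Eq.~(5.7) of \cite{myhombif}, which give the two pieces $P_j^\rms(L)(p_j(\alpha_j+L)-q_j(L))=A_j(L)\rme^{F_jL}\tilde v_j^\rms+\calO(\rme^{-(\kappa_j^\rms+\delta_j^\rms)L})$ and $\hat\Phi_j^\rms(0,-L)v=\rme^{F_jL}A_j(-L)^{-1}\hat v^\rms+\calO(\rme^{-(\kappa_j^\rms+\delta_j^\rms)L})$, then combines them using $A_j(L)=A_j(-L)$ for $L\in K_j(L^1)$. Your variation-of-constants sketch in the Floquet frame is precisely the content of those cited results, so the substance is the same; just be aware that $\hat\Phi_j^\rms(0,-L)$ is the evolution along $\hat q_j$, not $p_j$, so ``acts as $\rme^{F_jL}$'' is an approximation whose error (coming from $|\hat q_j(-x)-p_j(-x)|$) must itself be shown to land in the stated remainder --- this is exactly the rate-bookkeeping obstacle you anticipated, and the resolution via straight coordinates plus the spectral gap $\rho_j^\rms$ is the right one.
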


\begin{proof}
For readability, we set $\alpha_j=0$, see \eqref{e:asyphase}. Let $\tilde \Psi_j^{\rms/\rmc/\rmu}(x)$ be the stable/center and unstable eigen- or trichotomy projections \emph{on the whole real line}, $x\in \R$, of 
\[
\dot v = \partial_u f(p_j;0)v,
\]
which trivially exist by the Floquet form. Note that these di/trichotomies differ from those of the linearization in $q_j$.

\medskip
\enum
\item First note that 
\[
\begin{cases}
\Psi_j^\rms(L) = \Proj(E_j^{\rmc\rmu}(L), E_j^\rms(L))\\
P_j^{\rmc\rmu}(L) = \Proj(\hat E_j^\rms(-L),E_j^\rmu(L)+\hat E_j^\rmu(-L)),
\end{cases} 
\]
so that $P_j^{\rmc\rmu}(L)\Psi_j^\rms(L)$ is determined by $E_j^\rms(L)-\hat E_j^\rms(-L)$. (An appropriate norm for estimating this difference goes via suitable bases of these linear spaces.)
Since for $L\in K_j(L^0)$ it holds that $\tilde\Psi_j^\rms(-L) = \tilde\Psi_j^\rms(L)$ for all $j\in\virI^\rmo$ (the projections are constant for $j\in\virI_\rmE^\rmo$) we have
\[
E_j^\rms(L)-\hat E_j^\rms(-L) = E_j^\rms(L)
 - \Rg(\tilde \Psi_j^\rms(L)) + \Rg(\tilde \Psi_j^\rms(-L)) 
 -\hat E_j^\rms(-L),
\]
and we will estimate the two differences on the right hand side separately.

General perturbation estimates of dichotomies (e.g. Lemma 1.2(i) in \cite{bjorndiss}) imply $\Psi_j^\rms(L) -\tilde\Psi_j^\rms(L) = \calO(\rme^{(\delta_j-\kappa_j^\rms)L})$ and $ \hat \Psi_j^{\rmu}(-L) - \tilde\Psi_j^{\rmu}(-L) = \calO(\rme^{(\delta_j-\kappa_j^\rmu)L})$.

Hence, on the one hand $E_j^\rms(L) - \Rg(\tilde\Psi_j^\rms(L)) = \calO(\rme^{(\delta_j-\kappa_j^\rms)L})$.

On the other hand we can write 
\[
\tilde\Psi_j^\rms(-L)-\hat \Psi_j^\rms(-L) = 
\Id - \tilde\Psi_j^{\rmc\rmu}(-L)- (\Id - \hat \Psi_j^{\rmc\rmu}(-L))
= \hat \Psi_j^{\rmc\rmu}(-L) - \tilde\Psi_j^{\rmc\rmu}(-L),
\]
and, due to asymptotic phase we have 
$\frac{\rmd}{\rmd x}\hat q_j(-L) - \frac{\rmd}{\rmd x} p_j(-L) = \calO(\rme^{(\delta_j-\kappa_j^\rmu)L})$ in the center direction. Therefore, $\tilde\Psi_j^\rms(L)-\hat \Psi_j^\rms(-L) = \calO(\rme^{(\delta_j-\kappa_j^\rmu)L})$ and $\Rg(\tilde\Psi_j^\rms(L))-\hat E_j^\rms(-L) = \calO(\rme^{(\delta_j-\kappa_j^\rmu)L})$. (And analogously $\Psi_j^\rms(L)-\tilde\Psi_j^\rms(L)=\calO(\rme^{(\delta_j-\kappa_j^\rms)L})$).

In combination, since $\hat E_j^\rms(-L) \subset \ker P_j^{\rmu}(L)$ the weak version of the first estimate follows. The strong version of this estimate in straight coordinates is a consequence of the fact that then $E_j^\rms(L) = \Rg(\tilde\Psi_j^\rms(L))$ for all $L\geq L^1$ for sufficiently large $L^1\geq L^0$. Hence, for $L\geq L_1$ we have
\[
E_j^\rms(L)-\hat E_j^\rms(-L) = \Rg(\tilde \Psi_j^\rms(-L)) 
 -\hat E_j^\rms(-L),
\]
which implies the stronger estimate.

The proof of the second estimate is completely analogous. 
\item Since the stable manifold is a (at least) quadratic graph over the stable eigenspace for $j\in \virI_\rmE$ and center-stable trichotomy space for $j\in \virI_\rmP$ at $p_j$ we have that 
\[
p_j(L)-q_j(L) = 
\tilde\Psi_j^{\rms\rmc}(L)(p_j(L)-q_j(L)) + 
\calO((p_j(L)-q_j(L))^2).
\]
On the other hand, as in the proof of the previous item, we can replace $\tilde\Psi_j^\rms(L)$ by $\hat \Psi_j^\rms(L)$ with error of order $\rme^{(\delta_j-\kappa^\rmu_j)L}$. Since $\hat E_j^\rms(L)$ lies in the kernel of $P_j^{\rmc\rmu}(L)$ and $p_j(L)-q_j(L)= \calO(\rme^{(\delta_j-\kappa_j^\rms)L})$ there are non-negative constants $C^*$ and $C_*$ such that
\begin{eqnarray}
P_j^{\rmc\rmu}(L)(p_j(L)-q_j(L)) &=& 
P_j^{\rmc\rmu}(L)\tilde \Psi_j^\rmc(L)(p_j(L)-q_j(L)) \label{e:err}\\
&& + \calO(C_* \rme^{(\delta_j-\kappa^\rms_j-\kappa_j^\rmu)L}
+ C^*\rme^{2(\delta_j-\kappa^\rms_j)L}).\nonumber
\end{eqnarray}

For $j\in \virI_\rmP^\rmo$ recall that the asymptotic phase as $L\to\infty$ of $q_j(L)$ is that of $p_j(L)$, hence $q_j(L)$ lies in the strong stable fiber with phase $L$. Strong stable fibers are (at least) quadratic graphs over the (strong) stable trichotomy spaces so that 
\[
\tilde \Psi_j^{\rmc}(L)(p_j(L)-q_j(L)) = \calO((p_j(L)-q_j(L))^2) = 
\calO(\rme^{2(\delta_j-\kappa_j^\rms)L}).
\]
The claimed weak estimate follows from combining this estimate with \eqref{e:err}.

The stronger estimate for straight coordinates means $C^*=0$ in \eqref{e:err}. Indeed, since the strong un/stable fibers and (center) un/stable manifold coincide with their tangent spaces at $p_j$ the higher order corrections disappear and \eqref{e:err} holds with $C^*=0$.

The estimate for $p_j(\alpha_j-L)-\hat{q}_j(-L)$ is completely analogous.
\item For simple eigenvalues or Floquet exponents there is no secular growth so that the rates of convergence are in fact the leading rates.
\item This is a reformulation of results in \cite{myhombif} as follows. 

Concerning the right hand sides without $\hat\Phi_j^\rms(0,-L)$ and $\Phi_j^\rmu(0,L)$, respectively, Lemma 10 in \cite{myhombif} yields the expansions 
\begin{align*}
P_j^\rms(L)(p_j(\alpha_j+L) - q_j(L)) &= A_j(L)\rme^{F_j L}\tilde v^\rms_j + \calO(\rme^{-2(\kappa_j^\rms + \delta_j^\rms)L}),\\
P_j^\rmu(L)(p_j(\alpha_j-L) - \hat q_j(-L)) &= A_j(-L)\rme^{-F_j L}\tilde v^\rmu_j + \calO(\rme^{-2(\kappa_j^\rmu +\delta_j^\rmu)L}),
\end{align*}
for certain $\tilde v_j^{\rmu/\rms}$ in the leading un/stable eigenspace of $F_j$. Note that in the present case $\alpha$ and $v$ from that Lemma are constant, and $L\in K_j(L^1)$ so that $p_j(\alpha_j+L) = p_j(\alpha_j-L)$ and $A_j(L) = A_j(-L)$.

Concerning  $\hat\Phi_j(0,-L)$ and $\Phi_j^\rmu(0,L)$, Equation (5.7) in \cite{myhombif} shows that for any $v$ there are $\hat v^{\rmu/\rms}$ in the un/stable eigenspace of $F_j$ such that
\begin{align*}
\hat\Phi_j^\rms(0,-L)v = \rme^{F_jL}A_j(-L)^{-1}\hat v^\rms + \calO\left(\rme^{-(\kappa_j^\rms+ \delta_j^\rms)L}\right),\\
\Phi_j^\rmu(0,L)v = \rme^{-F_jL}A_j(L)^{-1}\hat v^\rmu + \calO\left(\rme^{-(\kappa_j^\rmu+ \delta_j^\rmu)L}\right).
\end{align*}
Since $A_j(L) = A_j(-L)$ for $L\in K_j(L^1)$, and these expansions only depend on the error to the asymptotic vector fields, combination of this with the previous step proves the claim.
\end{list}
~\hfill{}
\end{proof}

\section{Coordinates of trajectories}\label{s:coord}

Following and improving \cite{myhombif}, in this section we establish a suitable coordinate system for the $(n-1)$-dimensional set of trajectories that pass nearby $p_j$. We consider the difference $V=(w,\hat w)$ between solutions $u$ to \eqref{e:ode} and $q_j$, $\hat q_j$, where $u(0)\sim q_j(0)$ and $u(2L)\sim \hat q_j(0)$. (In this section $q_j, \hat q_j$ can be any orbits that lie in the stable and unstable manifolds of $p_j$, respectively.) We determine all such $V$ by an implicit function theorem, where the time shift along a trajectory is removed to make $V$ unique. For equilibria this is done in the usual way of \cite{Lin90} by imposing certain boundary value data of $u$ in Poincar\'e sections attached to the in- and outflowing solutions $q_j$ and $\hat q_j$, and adding a continuous parameter $L$ so that $2L$ is the time spent between the section. 

For periodic orbits it would be natural to do the same, only $L$ would not come from a connected unbounded interval in order for $V$ to be small. 
However, due to our approach via a certain variation-of-constants solution operator, we slightly deviate from this, see also \cite{myhombif}. Briefly, in order to control the integrals over the centre part of the trichotomy projections, we use exponentially weighted spaces. This causes difficulty to control the centre projection of another integral term from coupling in- and outflow, which stems from the deviation of the phase with respect to $p_j$ at $x=L$. Due to asymptotic phase this term vanishes in the limit $L\to \infty$, but integrability requires a good estimate. We avoid this and at the same time remove the phase shift by simply requiring that the centre parts of $w(x)$ and $\hat w(\hat x)$ at time $x=L$, $\hat x=-L$ vanish. This precisely disallows time shifts and the result is equivalent to the approach by Poincar\'e sections. In particular, the resulting $V$ give a parameterisation of all orbits near $p_j$ with normalised travel time sequence. A posteriori, the reconstructed solutions approximately satisfy boundary conditions in suitable linear Poincar\'e sections and $2L$ is the approximate travel time between these.

Notably, this parametrises trajectories in a neighborhood of $\{q_j(x):x\geq 0\}\cup\{\hat q_j(\hat x) : \hat x\leq 0\}$. An alternative to this approach is to parametrise trajectories in a small neighborhood of $p_j$ and then insert a `global' trajectory piece between the inflow and outflow  boundaries of these neighborhoods, see \cite{becks,krock,Shilnikov}.

\subsection{Passage coordinates}
We choose the boundary value data in subsets of Poincar\'e sections defined via the trichotomies. For $j\in\virI_\rmP$ the space $(q_j(0),\hat q_j(0)) + E_j^\rms(0)\times \hat E_j^\rmu(0)$ is $(n-1)$-dimensional since the flow direction counts towards stable \emph{and} unstable directions for periodic orbits. On the other hand, for $j\in \virI_\rmE^\rmo$ we have $\dot q_j(x)\in E_j^\rms(x)$ and $\dot{\hat {q}}(x)\in \hat E_j^\rmu(x)$ so that after removing the flow directions for in- and outflow $n-2$ dimensions are left. As mentioned, this is compensated by a continuous `travel time' parameter $L\in K_j(L^1)$. 
To eliminate the flow direction for $j\in \virI_\rmE^\rmo$ we define
\begin{eqnarray*}
%
Q_j^\rms &:=& \Proj\left(E_j^\rmu(0) + \mathrm{span}(\dot q_j(0)), E_j^\rms(0) \cap \mathrm{span}(\dot q_j(0))^\perp\right),\\
\hat Q_j^\rmu &:=& \Proj\left(\hat E_j^\rms(0) + \mathrm{span}(\dot{\hat q}_j(0)), E_j^\rmu(0) \cap \mathrm{span}(\dot{\hat q}_j(0))^\perp\right).
\end{eqnarray*}
Here $E^\perp$ is the orthogonal complement of a linear space $E$. For the outflow we define $\hat Q_j^\rmu$ accordingly, and set $Q_j^\rms:=\hat Q_j^\rmu:=\Id$ for $j\in \virI_\rmP^\rmo$. 

Now boundary data in $Q_j^\rms E_j^\rms(0)\times \hat Q_j^\rmu \hat E_j^\rmu(0)$ for all $j\in \virI^\rmo$ excludes precisely the flow directions at $q_j(0)$ and $\hat q_j(0)$. Recall that $E_j^\rms(0)$ and $\hat E_j^\rmu(0)$ are determined uniquely by the di-/trichotomy.

Finally, for all $j\in J$ we define the  Poincar\'e sections
\begin{eqnarray*}
\Sigma_j &:=& q_j(0) + Q_j^\rms E_j^\rms(0) + E_j^\rmu(0),\\
\hat\Sigma_j &:=& \hat q_j(0) + \hat E_j^\rms(0) + \hat Q_j^\rmu \hat E_j^\rmu(0).
\end{eqnarray*}

Hence, for $j\in\virI_\rmE^\rmo$ we solve the boundary value problem \eqref{e:ode} subject to $u(0)\in \Sigma_j$ and $u(2L)\in\hat\Sigma_j$.
As mentioned, for $j\in\virI_\rmP^\rmo$ there are technical reasons not to consider this boundary value problem. In addition, the set of travel times has to be `phase coherent' in order for the variations $V$ to be small near the periodic orbit. For convenience we a priori restrict $L$ to the set $K_j(L^1)$ which already appeared in Lemma~\ref{l:proj-est}. For general Poincar\'e sections, the set $K_j$ would need to be adjusted and therefore, in general, differs for each $\mu$, which is inconvenient for the leading order expansion of parameters. The tradeoff is that orbits starting in $\Sigma_j$ do not necessarily lie exactly in $\hat\Sigma_j$ for $L\in K_j(L^1)$: instead of having zero center part at time $2L$ we will require this at time $L$. Since near $q_j(0)$ the flow acts as a diffeomorphism between any choice of hyperplanes transverse to the flow this is no restriction, but rather a normalisation of the discrete travel times.

\medskip
Due to the geometric interpretation, we refer to the boundary data as \emph{coordinate parameters} and denote
\[
\Omega_j:= Q_j^\rms E_j^\rms(0) \times \hat Q_j^\rmu \hat E_j^\rmu(0),
\] 
with elements $\om_j=(\om^\rms_j,\hat\om^\rmu_j)$ for $\om^\rms_j\in Q_j^\rms E_j^\rms(0)$, $\hat\om_j \in \hat Q_j^\rmu \hat E_j^\rmu(0)$, see Figure~\ref{f:notationpar}.
\begin{figure}
\begin{center}
\input{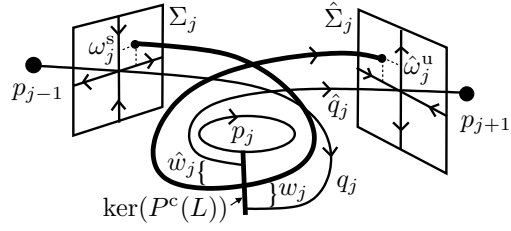}
\caption{Illustration of the notation for the flow near a periodic orbit $p_j$ with minimal period $T_j$, $j\in J_\rmP$. Theorem~\ref{t:1}, see also Corollary~\ref{c:1}, show that trajectories near $\{q_j(x) : x \geq 0\}\cup \{\hat q_j(\hat x) : \hat x \leq 0\}$ are parametrised by $\om_j^\rms\in E_j^\rms(0)$, $\hat \om_j^\rmu\in \hat E_j^\rmu(0)$ and $L\in \{\ell T_j/2 : \ell\in\N, \ell T_j/2 \geq L_0\}$.}\label{f:notationpar}
\end{center}
\end{figure}

Since we also want to parametrise un/stable manifolds including the flow direction for equilibria, we define in addition
\[
\Omega_j^\rmf:=E_j^\rms(0)\times \hat E_j^\rmu(0).
\]

We now turn to the aforementioned difference $V$ between solutions and heteroclinic orbits. Given a solution $u$ for parameters $\mu$ we call any
\[
V(x,\hat x; \mu, u,\sigma, L):= \left(u(\sigma+ x)-q_j(x),u(\sigma + \hat x +2L)-\hat q_j(\hat x)\right)
\] 
a \emph{$j$-variation} of $u$ and denote the components as $V=(w,\hat w)$. A solution $u$ of \eqref{e:ode} for $x\in [0,2L]$ can be reconstructed from a given $j$-variation if on the one hand
\begin{eqnarray}
\frac{\mathrm{d}}{\mathrm{d}x} w(x) &=& \partial_u f(q_j(x);0)w(x) 
+ g_j(w(x),x;\mu),\label{e:vode}\\
\frac{\mathrm{d}}{\mathrm{d}\hat x} \hat w(\hat x) &=& 
\partial_u f(\hat q_j(\hat x);0)\hat w(\hat x) 
+ \hat g_j(\hat w(\hat x),\hat x;\mu),\nonumber
\end{eqnarray}
for $x\in [0,L]$ and $\hat x \in [-L,0]$, where
\begin{eqnarray*}
g_j(w(x),x;\mu) &:=& f(q_j(x) + w(x);\mu) - f(q_j(x);0) 
- \partial_u f(q_j(x);0)w(x),\\
\hat g_j(\hat w(\hat x),\hat x;\mu) &:=& f(\hat q_j(\hat x) 
+ \hat w(\hat x);\mu) - f(\hat q_j(\hat x);0) 
- \partial_u f(\hat q_j(\hat x);0)\hat w(\hat x).
\end{eqnarray*}
On the other hand, the reconstructed orbit is given by
\[
\begin{cases}
q_j(x)+w_j(x) &,\; x\in [0,L]\\
\hat{q}_j(x-2L) + \hat{w}_j(x-2L)&,\; x\in [L,2L]
\end{cases}
\] 
and is continuous, if $w_j(L) - \hat{w}_j(-L) = -q_j(L) + \hat{q}_j(-L)$. Therefore, we define 
\[
b_j(L) := \hat{q}_j(-L) - q_j(L),
\]
which will be the main contribution to the expansion given in \eqref{e:intro}.

We look for solutions that are simultaneously close to $q_j$ and $\hat{q}_j$, so that the $j$-variation $V$ is small and given by an implicit function theorem. Since $b_j(L)$ is asymptotically periodic for $j\in \virI_\rmP^\rmo$ as $L\to \infty$ the aforementioned `phase coherence' condition appears, and for $L\in K_j(L)$ we indeed have smallness:
\[
b_j(L) = p_j(\alpha_j + L) -q_j(L) + \hat q _j(-L) - p_j(\alpha_j + L) 
= \calO(\rme^{-\kappa_j L}).
\]

Moreover, Lemma~\ref{l:proj-est}(4) implies for $L_j\in K_j(L^1)$  that
\begin{eqnarray}
P_j^\rmu(L_j)b_j(L_j) &=& \calO(R_j)\label{e:b-est-u}\\
P_j^{\rms\rmc}(L_j)b_j(L_j) &=& \calO(\hat R_j)\label{e:b-est-s}.
\end{eqnarray}

The trichotomies imply for $x, -\hat x\in [0,L]$ that
\[
\R^n \sim E^\rms(x)\times E^\rmc(x)\times E^\rmu(x)
\sim \hat E^\rms(\hat x) \times \hat E^\rmc(\hat x)
\times \hat E^\rmu(\hat x)
\]
and provide a decomposition $w=w_j^\rms+w_j^\rmc+w_j^\rmu$ into $w_j^{\rms/\rmc/\rmu}(x):= \Psi^{\rms/\rmc/\rmu}_j(x) w(x)$ and $\hat w=\hat w_j^\rms+\hat w_j^\rmc+\hat w_j^\rmu$ into $\hat w_j^{\rms/\rmc/\rmu}(\hat x):= \hat \Psi^{\rms/\rmc/\rmu}_j(\hat x) \hat w(\hat x)$. We use the analogous superscripts for the decomposition of $g_j$ and $\hat g_j$. 

Similar to \cite{bjorndiss}, Lemma 3.4, the following estimates hold. We first change coordinates and rescale time so that (see \cite{myhombif}) there is $\eps_0>0$ such that for $|\mu|\leq \eps_0$ we have $p_j(x;0)=p_j(x;\mu)$, $j\in\virI_\rmred$, i.e., $p_j$ is independent of $\mu$ for $|\mu|\leq \eps_0$.

\begin{lemma}\label{l:g-est} There is $C>0$ depending only on $\delta_j$ and $q_j, \hat q_j$ such that for $x,-\hat x\geq 0$,
\begin{eqnarray}
 |g_j(w,x;\mu)| &\leq& 
   C\left(|w|^2 + |\mu|(|w| + \rme^{(\delta_j-\kappa_j^\rms)x})\right)\\
|g^\rms_j(w,x;\mu)| &\leq& 
C\left( (|w_j^\rms| + \rme^{(\delta_j-\kappa_j^\rms)x})|w|^2 + |\mu|(|w^\rms_j|+\rme^{(\delta_j-\kappa_j^\rms)x} \right) \label{e:g-est-s}\\
 |\hat g_j(w,\hat x;\mu)| &\leq& 
   C\left(|w|^2 + |\mu|(|w| + \rme^{(\delta_j-\kappa_j^\rms)|\hat x|})\right)\\
|\hat g^\rmu_j(\hat w,\hat x;\mu)| &\leq& 
C\left( (|\hat w_j^\rmu| + \rme^{(\delta_j-\kappa_j^\rmu)|\hat x|}|)
|\hat w|^2 + |\mu|(|\hat w^\rmu_j|+ \rme^{(\delta_j-\kappa_j^\rmu)|\hat x|}\right)
\label{e:g-est-u}
\end{eqnarray}
\end{lemma}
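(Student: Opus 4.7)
The proof rests on a joint Taylor expansion of $f$ in $u$ and $\mu$ about $(q_j(x),0)$, combined with two structural inputs: the preparatory coordinate change from \cite{myhombif} that makes $p_j(\cdot;\mu)=p_j(\cdot;0)$ for $|\mu|\le\eps_0$, and the asymptotic phase estimate \eqref{e:asyphase}.

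For the first inequality I write
$$g_j(w,x;\mu)=\bigl[f(q_j;\mu)-f(q_j;0)\bigr]+\bigl[f(q_j+w;\mu)-f(q_j;\mu)-\partial_u f(q_j;0)w\bigr].$$
The first bracket equals $\int_0^1\partial_\mu f(q_j(x);s\mu)\mu\,\mathrm{d}s$. The $\mu$-independence of $p_j$ gives $f(p_j(x);\mu)\equiv\dot p_j(x)$, and hence $\partial_\mu^k f(p_j(x);0)=0$ for every $k\ge1$. Combining this with \eqref{e:asyphase} via the mean value theorem along the segment from $p_j(x+\alpha_j)$ to $q_j(x)$ yields
$$|\partial_\mu^k f(q_j(x);0)|\le C\rme^{(\delta_j-\kappa_j^\rms)x}\quad\text{for every fixed }k\ge1,$$
so the first bracket is $\calO(|\mu|\rme^{(\delta_j-\kappa_j^\rms)x})$. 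The standard Taylor estimate on the second bracket produces $\calO(|w|^2+|w||\mu|)$ (the $|w||\mu|$ term coming from $[\partial_u f(q_j;\mu)-\partial_u f(q_j;0)]w$). Summing gives the claimed estimate for $g_j$. The estimate for $\hat g_j$ follows by the same argument applied to $\hat q_j$, with the unstable rate $\kappa_j^\rmu$ and $|\hat x|$ in place of their stable counterparts.

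For the stable projection estimate I compose with $\Psi_j^\rms(x)$, which is uniformly bounded on $x\ge0$ by the dichotomy estimate \eqref{e:trichest}; the linear-in-$\mu$ contribution then inherits the decay $\rme^{(\delta_j-\kappa_j^\rms)x}$. The sharpening of the quadratic and mixed $|w||\mu|$ contributions uses invariance of the stable manifold under $f(\cdot;0)$ together with the $\mu$-independence (after the preparatory coordinate change) of $W^\rms_{\mathrm{loc}}(p_j)$: since $f(\cdot;0)$ is tangent to $W^\rms(p_j)$ and $W^\rms(p_j)$ is a (at least) quadratic graph over $E_j^\rms(x)\oplus E_j^\rmc(x)$ at $q_j(x)$, decomposing $w=w_j^\rms+w_j^\rmc+w_j^\rmu$ and Taylor expanding about the submanifold where the transverse coordinates vanish extracts the extra factor $|w_j^\rms|+\rme^{(\delta_j-\kappa_j^\rms)x}$, the exponential part absorbing the phase discrepancy $|q_j(x)-p_j(x+\alpha_j)|$. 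The bound for $\hat g_j^\rmu$ is the mirror statement at the unstable manifold of $\hat q_j$. The main obstacle is this last piece of bookkeeping: one must track the phase-discrepancy factor simultaneously with the transverse-coordinate factors in the Taylor remainder and absorb all cross terms into $|w_j^\rms|+\rme^{(\delta_j-\kappa_j^\rms)x}$ via the graph representation of the (center-)stable manifold, as in \cite{bjorndiss}, Lemma~3.4.
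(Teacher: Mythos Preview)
Your treatment of the unprojected estimates for $g_j$ and $\hat g_j$ is fine and matches the paper: the $\mu$-independence of $p_j$ gives $\partial_\mu f(p_j;\cdot)=0$, and the asymptotic phase estimate \eqref{e:asyphase} then transfers this to $q_j$ with the exponential factor.

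The gap is in the projected estimate \eqref{e:g-est-s}. You invoke invariance of the \emph{stable} manifold and expand about the submanifold where the coordinates transverse to $E_j^\rms\oplus E_j^\rmc$ vanish. But transverse to $E_j^{\rms\rmc}$ means $w_j^\rmu=0$, so such an expansion extracts a factor $|w_j^\rmu|$, not $|w_j^\rms|$; invariance of $W^\rms$ controls $f^\rmu$ on $W^\rms$, not $f^\rms$ off it. The correct geometric input is invariance of the (center-)\emph{unstable} manifold: in the straight coordinates introduced earlier in the paper the strong unstable fibers coincide with the unstable eigenspace, so $\tilde\Psi_j^\rms(x)f(p_j(x)+\tilde\Psi_j^\rmu(x)w;0)\equiv 0$ identically in $w$. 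Differentiating this identity and passing from $\tilde\Psi_j^\rms$ to $\Psi_j^\rms$ at the cost $\calO(\rme^{(\delta_j-\kappa_j^\rms)x})$ (via Lemma~\ref{l:proj-est}) yields $\partial_{uu}f^\rms(p_j+w;0)=\calO(|w_j^\rms|+\rme^{(\delta_j-\kappa_j^\rms)x})$ and likewise $\partial_\mu f^\rms=\calO(|w_j^\rms|+\rme^{(\delta_j-\kappa_j^\rms)x})$, which is exactly the factor you need. The same swap affects your sketch for $\hat g_j^\rmu$: there the relevant manifold is the (center-)\emph{stable} one. You also never invoke the straight coordinates, but they are the mechanism that turns manifold invariance into the algebraic identity above; without flattening, the graph corrections would reintroduce terms you cannot absorb into $|w_j^\rms|+\rme^{(\delta_j-\kappa_j^\rms)x}$.
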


\begin{proof}
By definition of straight coordinates, for $w\in\R^n$, we have $\tilde\Psi_j^\rms(x)f(p_j(x) + \tilde\Psi_j^\rmu(x) w;0)\equiv 0$, where $\tilde\Psi_j^{\rms/\rmc/\rmu}(x)$ are the projections of the linear evolution at $p_j$ as in the proof of Lemma~\ref{l:proj-est}. From that proof we know $\Psi_j^\rms(x)-\tilde\Psi_j^\rms(x) = \calO(\rme^{(\delta_j-\kappa_j^\rms)x})$; recall $\Psi^{\rmu/\rms/\rmc}_j(x)$ are the projection of the trichotomies $\Phi(x,y)$, see \eqref{e:trichest}. Therefore, 
\[
\Psi_j^\rms(x)f(p_j(x) + \Psi_j^\rmu(x)w;0)  
= \calO(\rme^{(\delta_j-\kappa_j^\rms)x}).
\]
In the following we omit the argument $x$ for readability and set $f^\rms:= \Psi_j^\rms(x)f$. Hence,
\[
\partial_{uu}f^\rms(p_j + w;0)  
= \calO(|w_j^\rms| + \rme^{(\delta_j-\kappa_j^\rms)x}).
\]
We have
\begin{eqnarray*}
g_j(w,x;\mu) &=& f(q_j+w;\mu) - f(q_j;0) - \partial_u f(q_j;0)w\\
&=& f(q_j+w;\mu) - f(q_j+w;0) + f(q_j+w;0) - f(q_j;0) - \partial_u f(q_j;0)w\\
&=& 
\int_0^1\partial_\mu f(q+ w;\tau \mu)\mu \rmd \tau
+ \int_0^1\int_0^1\partial_{uu}f(q+\tau s w;0) \tau w^2 \rmd\tau\rmd s,
\end{eqnarray*}
and $\partial_\mu f(p_j+w;\mu) = \calO(|w|)$ since $f(p_j;\mu)=f(p_j;0)$. It follows that $\partial_\mu f = \calO(|w| + \rme^{(\delta_j-\kappa_j^\rms)x})$ which proves the first claimed estimate. We also infer $\partial_\mu f^\rms = \calO(|w^\rms| + \rme^{(\delta_j-\kappa_j^\rms)x})$ which proves the second claimed estimate. The proof of the remaining estimates is completely analogous.~\hfill{}
\end{proof}

Based on this, $V$ can be found with uniform estimates in $L$ for $j\in\virI_\rmP$ in the weighted space
\begin{eqnarray*}
X_{\eta,L} &=& \left(C^0([0,L],\R^n)\times C^0([-L,0],\R^n),\|\cdot\|_{\eta,L}\right),\\
\|(w,\hat w)\|_{\eta,L} &=& \| w\|_{\eta,L}^+ 
+ \| \hat w\|_{\eta,L}^-,\\
\| w\|_{\eta,L}^+ &=& \sup\{|\rme^{\eta x}w(x)| : x\in [0,L]\},\\
\| \hat w\|_{\eta,L}^- &=& \sup\{|\rme^{\eta |\hat x|}\hat w(\hat x)| : \hat x\in [-L,0]\}.
\end{eqnarray*}

By Lemma~\ref{l:g-est}, for any $0\leq \eta_j<\kappa_j$ there is a constant $C>0$ independent of $L$ such that
\begin{equation}
\begin{cases}\label{e:g-est}
   \|g_j(w,\cdot;\mu,L)\|_{\eta_j,L}^+ &\leq 
 	C( (\|w\|_{\eta_j,L}^+)^2 + |\mu|),\\
	   \|\hat g_j(\hat w,\cdot;\mu,L)\|_{\eta_j,L}^- &\leq 
 	C( (\|\hat w\|_{\eta_j,L}^-)^2 + |\mu|).
\end{cases}
\end{equation}

The coordinates of trajectories will be those $(\om_j,\mu,L)$ with $L\in K_j(L_1)$, $\om_j=(\om^\rms_j,\hat\om^\rmu_j) \in \Omega_j$ that generate a fixed point of the map
\[
\calG_j(w,\hat w;\om_j,\mu,L) : X_{\eta,L} \to X_{\eta,L}, \quad j\in \virI^\rmo,
\]
defined next. The maps $\calG_j$ can be derived from a variation-of-constants solution of \eqref{e:vode} decomposed suitably by the trichotomies and are given by
\[
\calG_j(w,\hat w;\om_j,\mu,L)\dblx :=\left(
  \begin{array}{*{1}{l}}
    \Phi_j^\rms(x,0)\om^\rms_j\\ + 
    \int_0^x\Phi_j^\rms(x,y)g_j(w(y),y;\mu)\dy\\
    +\int_L^x\Phi_j^{\rmc\rmu}(x,y)g_j(w(y),y;\mu)\dy\\
    + \Phi_j^\rmu(x,L)P_j^\rmu(L)(\couplin_j(L)\om 
    + \coupnl_j(w,\hat w;\mu,L) + b_j(L))\\
    \hline
    \Phi_j^\rmu(\hat x,0)\hat\om^\rmu_j\\ 
    +\int_{-L}^{\hat x}\hat\Phi_j^{\rms\rmc}(\hat x,y)\hat 
    g_j(\hat w(y),y;\mu)\dy\\
    +\int_0^{\hat x}\hat\Phi_j^\rmu(\hat x,y)\hat 
    g_j(\hat w(y),y;\mu)\dy\\    
    -\hat\Phi_j^{\rms\rmc}(\hat x,-L)P_j^{\rms\rmc}(L)(\couplin_j(L)\om
    + \coupnl_j(w,\hat w;\mu,L) + b_j(L))
  \end{array}
\right)
\]
where the horizontal line separates first and second component $\calG_j = (\calG_{j,1}, \calG_{j,2})$. The terms coupling these components are
\begin{eqnarray*}
	\couplin_j(L)\om_j &=& \hat\Phi_j^\rmu(-L,0)\hat\om^\rmu_j - 
	\Phi_j^\rms(L,0)\om^\rms_j,\\
	\coupnl_j(w,\hat w;\mu,L) &=& \int_0^{-L}\hat\Phi_j^\rmu(-L,y)\hat g_j(\hat w(y),y;\mu) \rmd y - \int_0^L \Phi_j^\rms(L,y)g_j(w(y),y;\mu) \rmd y.
\end{eqnarray*}

By Lemma~\ref{l:proj-est} there is $C>0$ depending only on $\delta_j$ and $q_j, \hat q_j$ such that
\begin{eqnarray}
	|P_j^\rmu(L)\couplin_j(L)\om_j| &\leq& C(R_j\rme^{(\delta_j-\kappa_j^\rms) L}|\om^\rms_j| 
	+ \rme^{(\delta_j-\kappa_j^\rmu) L}|\hat\om^\rmu_j|)\label{e:couplin-est} \\
	|P_j^\rms(L)\couplin_j(L)\om_j| &\leq& C(\rme^{(\delta_j-\kappa_j^\rms) L}|\om^\rms_j| + \hat R_j\rme^{(\delta_j-\kappa_j^\rmu) L}|\hat\om^\rmu_j|)\label{e:couplin-est2}.
\end{eqnarray}

Note that $\Psi_j^\rmc(L)\calG_{j,1}(x=L)=0$ and $\hat\Psi_j^\rmc(-L)\calG_{j,2}(\hat x=-L)=0$. As mentioned above, for $j\in J_\rmP$, this is equivalent to fixing the phase on a reconstructed orbit from a fixed point of $\calG_j$. 
It is shown in \cite{myhombif}, Lemma 4, that fixed points of $\calG_j$ indeed generate the aforementioned reconstructed orbits $u(x)$ of \eqref{e:ode} for $x\in[0,2L]$. The following theorem proves that all orbits can be obtained in this way. Recall that $\om_j\in\Omega^\rmf_j$ contains the flow direction for $j\in J_\rmE$ so that the map from fixed points to trajectories at $x=\hat x=0$ is not injective, while for $\om_j\in\Omega_j$ it is. 

\begin{notation} $B(X,\rho) := \{x\in X : |x|_X \leq \rho\}$ where $|\cdot|_X$ is the norm of $X$.
\end{notation}

\medskip
The following theorem provides the coordinates of trajectories near $q_j$, $\hat q_j$ (in fact near any in- outflow pair at $p_j$); this is formulated more explicitly in Corollary~\ref{c:1} below.

\medskip
\begin{theorem}\label{t:1} Assume Hypothesis~\ref{h:basic} and take $j\in \virI$, as well as any $\eta_j\in(0,\kappa_j)$ if $j\in \virI_\rmP$, and $\eta_j\in[0,\kappa_j)$ if $j\in \virI_\rmE$. There exist $\eps>0$, $L^*\geq L^1$ depending only on $q_j, \hat q_j$ such that the following hold for $L\in K_j(L^*)$, $\mu\in B(\R^d, \eps)$, $\om_j\in B(\Omega^\rmf_j,\eps)$.

\enum
	\item 
	There exists a unique $V_j=V_j(\om_j,\mu,L)\in X_{\eta_j,L}$ 
  such that $V_j = \calG_j(V_j;\om_j,\mu,L)$. In addition, $V_j$ is $C^k$ smooth in $(\om_j,\mu)$ and for $j\in \virI_\rmE^\rmo$ also 
  $C^k$ smooth in $(\om_j,\mu,L)$.
  \item 
  Let $V(x,\hat x;\mu,u,0,L)=(w(x),\hat w(\hat x))$ be the $j$-variation of a solution $u$ of \eqref{e:ode} with $u(0)\in \Sigma_j$ and, for $j\in \virI_\rmE^\rmo$, $u(2L)\in \hat\Sigma_j$. If $|V(0;\mu,u,\sigma,L)|\leq \eps$ then there is unique $\sigma^*=
  \calO(|P_j^\rmc(L)w(L;\mu,u,\sigma)|)$ such that
    \[
  	V(\cdot;\mu,u,\sigma^*,L) \equiv V_j(\om_j,\mu,L)
	\] 
	for $\om_j = \left(Q_j^\rms\Psi_j^{\rms}(0)w(0;\mu,u,\sigma^*,L),\hat Q_j^\rmu\hat \Psi_j^\rmu(0)\hat w(0;\mu,u,\sigma^*,L)\right)$.
  \item 
    For any $\delta_j\in(\eta_j,\kappa_j)$, there is $C>0$ such that a fixed point $(W,\hat W)(\om_j,\mu,L)$ of $\calG_j(\cdot,\om_j,\mu,L)$ satisfies, for $x,-\hat x \in [0,L]$, 
   \begin{eqnarray}
    \|W(\om_j,\mu,L)\|_{\eta_j,L}^+ &\leq& 
    C(|\mu|+|\om_j^\rms| + R_j),\nonumber\\
    \|\hat W(\om_j,\mu,L)\|_{\eta_j,L}^- &\leq& 
     C(|\mu|+|\hat\om_j^\rmu| + \hat R_j),\nonumber\\
    |\Psi_j^\rms(x)W_j(\om_j,\mu,L)(x)| &\leq& 
    C \rme^{(\delta_j-\kappa_j^\rms)x},\label{e:est-stab}\\
    |\hat\Psi_j^\rmu(\hat x)\hat W_j(\om_j,\mu,L)(\hat x)| &\leq& 
    C \rme^{(\delta_j-\kappa_j^\rmu)\hat x}.\label{e:est-unstab}
  \end{eqnarray}
  \item Under Hypothesis~\ref{h:simplestable} or~\ref{h:simpleunstable} the estimates \eqref{e:est-stab} and \eqref{e:est-unstab} hold with $\delta_j=\eta_j$ in $\hat{R}_j$ or in $R_j$, respectively.
\end{list}
\end{theorem}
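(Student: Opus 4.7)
The plan is to treat the theorem as a variation-of-constants fixed-point problem for $\calG_j$ on the exponentially weighted space $X_{\eta_j,L}$, carefully uniform in $L\in K_j(L^*)$.

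For part (1), I would verify that $\calG_j(\cdot;\om_j,\mu,L)$ preserves a small closed ball and is a uniform contraction on $X_{\eta_j,L}$. The trichotomy bounds \eqref{e:trichest}, combined with $0\leq\eta_j<\kappa_j$, turn each stable/unstable Duhamel integral into an exponentially decaying kernel that gives $L$-independent operator bounds against the weighted norm. The centre integrals $\int_L^x\Phi_j^{\rmc\rmu}g_j\,\dy$ and $\int_{-L}^{\hat x}\hat\Phi_j^{\rms\rmc}\hat g_j\,\dy$ only accrue factors like $\rme^{-\eta_j L}$ at the lower limit, so they are controlled through the weight. The quadratic-plus-$|\mu|$ estimates on the nonlinearities from Lemma~\ref{l:g-est} feed into \eqref{e:g-est}, and the boundary data is bounded via \eqref{e:couplin-est}, \eqref{e:couplin-est2}, \eqref{e:b-est-u}, \eqref{e:b-est-s}. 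Banach's theorem then gives a unique fixed point $V_j$ for $|\mu|,|\om_j|\leq\eps$, and the uniform contraction principle applied to the $C^{k+2}$ nonlinearity yields $C^k$ dependence on $(\om_j,\mu)$, and in addition on $L$ for $j\in\virI_\rmE$ where $L$ varies continuously.

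For part (2), given an orbit $u$ with the stated boundary data, its $j$-variation $V(\cdot;\mu,u,\sigma,L)$ automatically satisfies \eqref{e:vode}; after decomposing by the tri/dichotomies and identifying the free constants of integration with $\Psi_j^\rms(0)w(0)$ and $\hat\Psi_j^\rmu(0)\hat w(0)$, it satisfies the integral equation defining $\calG_j$ precisely when the centre normalisation $\Psi_j^\rmc(L) w(L)=0$ and $\hat\Psi_j^\rmc(-L)\hat w(-L)=0$ holds. For $j\in\virI_\rmE^\rmo$ these centre projections vanish identically and $\sigma^*=0$ works. For $j\in\virI_\rmP^\rmo$ the map $\sigma\mapsto\Psi_j^\rmc(L)w^\sigma(L)$ has non-vanishing $\sigma$-derivative, because the centre space $E_j^\rmc(L)$ is spanned by $\dot q_j(L)$; the implicit function theorem then produces a unique small $\sigma^*=\calO(|\Psi_j^\rmc(L)w(L;\mu,u,\sigma)|)$ killing the centre component, and reading off $\om_j$ from the stable/unstable projections of $V(0;\mu,u,\sigma^*,L)$ together with uniqueness in part (1) concludes the identification.

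Parts (3) and (4) follow by substituting the fixed point back into $\calG_j$. The full-norm bounds on $\|W_j\|^+_{\eta_j,L}$ and $\|\hat W_j\|^-_{\eta_j,L}$ fall out of the Banach iteration, with $R_j+\hat R_j$ generated by the $P_j^{\rmu/\rms}(L)b_j(L)$ boundary term through \eqref{e:b-est-u}, \eqref{e:b-est-s} and the $|\om_j|,|\mu|$ contributions through \eqref{e:couplin-est}, \eqref{e:couplin-est2}. The pointwise bounds \eqref{e:est-stab}, \eqref{e:est-unstab} follow by applying $\Psi_j^\rms(x)$ or $\hat\Psi_j^\rmu(\hat x)$ to the fixed-point identity, which isolates the corresponding stable or unstable Duhamel integral: the trichotomy decay in \eqref{e:trichest} together with \eqref{e:g-est-s}, \eqref{e:g-est-u} delivers the claimed $\rme^{(\delta_j-\kappa_j^{\rms/\rmu})x}$ behaviour. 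Under Hypothesis~\ref{h:simplestable} or~\ref{h:simpleunstable}, Lemma~\ref{l:proj-est}(3) permits $\delta_j=0$ in the relevant $R_j$ or $\hat R_j$, and re-running the estimates with this sharper bound yields part (4).

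The hard part is ensuring everything is uniform in $L\in K_j(L^*)$ in the periodic case: the centre direction has no decay, so only the exponentially weighted norm together with the phase-coherence restriction $L\in K_j$ and the two normalisations $\Psi_j^\rmc(L)\calG_{j,1}(L)=0$, $\hat\Psi_j^\rmc(-L)\calG_{j,2}(-L)=0$ prevent secular growth along the flow. Tracking the linear and nonlinear couplings $\couplin_j, \coupnl_j$ through the projections $P_j^{\rms/\rmc/\rmu}(L)$ and verifying that the estimates of Lemma~\ref{l:proj-est} genuinely close the contraction at small $\eps$ and large $L^*$ is the main technical bookkeeping, inherited from \cite{myhombif}.
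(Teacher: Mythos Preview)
Your approach matches the paper's, which defers Items~1 and~2 to Theorem~1 of \cite{myhombif} (noting only that the improved estimate $\|\partial_w g_j\|^+_{\eta,L}\leq K_1(\|w\|^+_{\eta,L}+|\mu|)$ from Lemma~\ref{l:g-est} allows $|\mu|\leq\eps$ rather than the $|\mu|\leq\eps\,\rme^{-\eta_jL}$ assumed there) and then refines the estimates for Items~3 and~4 using Lemma~\ref{l:proj-est}.

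There is one ordering issue in your treatment of Item~3. You say the separated full-norm bounds fall directly out of the Banach iteration, and then derive the pointwise bounds \eqref{e:est-stab}, \eqref{e:est-unstab} afterward. The paper runs this bootstrap in the opposite order, and for good reason: the naive iteration only yields the coupled estimate $\|(W_j,\hat W_j)\|_{\eta,L}\leq C(|\mu|+|\om_j|+\rme^{(\eta-\kappa_j)L})$ from \cite{myhombif}, because the nonlinear coupling term $\coupnl_j$ mixes $W_j$ and $\hat W_j$. To achieve the separation---only $R_j$ in $\|W_j\|^+_{\eta_j,L}$ and only $\hat R_j$ in $\|\hat W_j\|^-_{\eta_j,L}$---the paper first proves the pointwise stable/unstable bounds \eqref{e:est-stab}, \eqref{e:est-unstab}, substitutes them back into \eqref{e:g-est-s}, \eqref{e:g-est-u} to obtain the sharper \eqref{e:g-est-s2}, \eqref{e:g-est-u2}, and only then derives the refined $\coupnl_j$ estimates \eqref{e:coupnl-est}, \eqref{e:coupnl-est2} that close the separation. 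Your ingredients are correct, but the dependency runs pointwise-first, not norm-first.
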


Concerning the required smoothness of $f$, the loss of two degrees of differentiability is due to the coordinate changes (which can be performed simultaneously) that involve $f'$ and that $g$ contains $f'$.

\medskip
Before proving this theorem, to emphasise the coordinate system character we reformulate part of Theorem~\ref{t:1} in the following corollary taking $\om_j\in\Omega_j$. Recall from the beginning of this section that then the set of parameters $(\om_j,L)$ is $(n-1)$-dimensional for all $j\in\virI^\rmo$. The theorem in particular shows that this is also true for the set of fixed points.

\begin{corollary}\label{c:1}
For any $j\in \virI^\rmo$ there is $\eps>0$ and $L^*\geq L^1$ as well as a neighbourhood $\calU$ of $\{q_j(x) : x\in [0,L]\}\cup \{\hat q_j(x) : x \in [-L,0]\}$ such that the following holds. The set of solutions $u$ of \eqref{e:ode} that lie in $\calU$ and satisfy $u(0)\in\Sigma_j$, and, if $j\in \virI_\rmE^\rmo$, $u(2L)\in \hat\Sigma_j$ for $L\in K_j(L^*)$ is in one-to-one correspondence with the parameters $\{(\om_j,\mu,L) : \om_j \in B(\Omega_j,\eps), L\in K_j(L^*), |\mu|\leq \eps \}$ of fixed points of $\calG_j$.
\end{corollary}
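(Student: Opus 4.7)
The plan is to unpack the three parts of Theorem~\ref{t:1} into a geometric statement about trajectories, and to verify that restricting $\om_j$ from $\Omega_j^\rmf$ to $\Omega_j$ is exactly matched by imposing the Poincar\'e-section boundary conditions. I would exhibit the correspondence in both directions, and then infer bijectivity from the uniqueness clauses of Theorem~\ref{t:1}.

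For the forward map, given $(\om_j,\mu,L)$ with $\om_j\in B(\Omega_j,\eps)$, $L\in K_j(L^*)$, $|\mu|\leq\eps$, I would invoke Theorem~\ref{t:1}(1) to obtain the unique fixed point $V_j=(W_j,\hat W_j)$ of $\calG_j$ and assemble
\[
u_0(x) := \begin{cases} q_j(x) + W_j(x), & x\in[0,L],\\ \hat q_j(x-2L) + \hat W_j(x-2L), & x\in[L,2L]. \end{cases}
\]
Continuity at $x=L$ is built into $\calG_j$ through the $b_j(L) = \hat q_j(-L) - q_j(L)$ coupling, and $u_0$ then solves \eqref{e:ode} on $[0,2L]$ by Lemma~4 of \cite{myhombif}. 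The bounds in Theorem~\ref{t:1}(3) force $W_j,\hat W_j=\calO(\eps)$ uniformly in $L$, so $u_0$ lies in any prescribed tubular neighbourhood $\calU$ of $\{q_j(x):x\in[0,L]\}\cup\{\hat q_j(\hat x):\hat x\in[-L,0]\}$ after possibly shrinking $\eps$ and enlarging $L^*$. For $j\in\virI_\rmE^\rmo$ the identity $\Psi_j^\rmc\equiv 0$ together with $\om_j^\rms\in Q_j^\rms E_j^\rms(0)$ immediately gives $u_0(0)\in\Sigma_j$ and $u_0(2L)\in\hat\Sigma_j$, so I set $u:=u_0$. For $j\in\virI_\rmP^\rmo$ the center component $\Psi_j^\rmc(0)W_j(0)$ generically does not vanish, so $u_0(0)$ lies only near $\Sigma_j$; by transversality of $\Sigma_j$ to the flow at $q_j(0)$ I would choose a unique small $\tau=\tau(\om_j,\mu,L)$ with $u_0(\tau)\in\Sigma_j$ and set $u(x):=u_0(\tau+x)$.

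For the reverse map, given a solution $u\in\calU$ with $u(0)\in\Sigma_j$ (and $u(2L)\in\hat\Sigma_j$ when $j\in\virI_\rmE^\rmo$), I would apply Theorem~\ref{t:1}(2) to obtain a unique time shift $\sigma^*$ (which is zero when $j\in\virI_\rmE^\rmo$, since then $P_j^\rmc\equiv 0$) and a unique $\om_j=\bigl(Q_j^\rms\Psi_j^\rms(0)w(0),\,\hat Q_j^\rmu\hat\Psi_j^\rmu(0)\hat w(0)\bigr)$ such that $V(\,\cdot\,;\mu,u,\sigma^*,L)=V_j(\om_j,\mu,L)$; the $Q$-projections place $\om_j$ in $\Omega_j$ by their very definition. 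Bijectivity of the two maps then follows from the uniqueness clauses in Theorem~\ref{t:1}(1) and~(2), together with the observation that the flow is a local diffeomorphism between any two transversals near $q_j(0)$, which pins down $\tau$ and $\sigma^*$ as mutual inverses and forces distinct parameter triples to yield distinct solutions.

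The main obstacle I anticipate is the phase-shift bookkeeping in the case $j\in\virI_\rmP^\rmo$: the fixed-point formulation pins down the center parts of $V_j$ at $x=L$ and $\hat x=-L$ rather than at $x=\hat x=0$, so the reconstructed $u_0$ is only approximately adapted to $\Sigma_j$, and one has to insert the explicit small shift $\tau$ (respectively $\sigma^*$) to pass between the fixed-point picture and the section picture. This is harmless analytically because $L\in K_j(L^*)$ is a lattice normalisation that is stable under such small phase shifts and because $\Sigma_j$ is transverse to the flow by construction, but the two shifts must be tracked carefully so that the two maps compose to the identity. No analytic ingredient beyond Theorem~\ref{t:1} is required.
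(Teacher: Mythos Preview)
Your proposal is correct and follows exactly the approach implicit in the paper: the paper presents Corollary~\ref{c:1} merely as a reformulation of Theorem~\ref{t:1} restricted to $\om_j\in\Omega_j$ and gives no separate proof, whereas you spell out the forward and reverse maps and the phase-shift bookkeeping for $j\in\virI_\rmP^\rmo$ in detail. Your treatment is more explicit than the paper's but uses the same ingredients (Theorem~\ref{t:1}(1) for existence, (2) for the inverse with the shift $\sigma^*$, and (3) for the $\calU$-confinement), so there is no substantive difference.
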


\medskip
\emph{Proof of Theorem~\ref{t:1}.}
Items 1 and 2 are a consequence of Theorem 1 in \cite{myhombif} as follows. There it was assumed that $|\mu|\leq \eps\rme^{-\eta_j L}$, but in the present case we can take $|\mu|\leq \eps$ due to the following estimate. For any $0<\eta_j<\kappa_j$ we have
\[
 \|\partial_{w}g_j(w,\cdot;\mu)\|_{\eta,L}^+ 
 \leq K_1(\|w\|_{\eta,L}^+ + |\mu|),
\]
and the same with hats in the $\|\cdot\|_{\eta,L}^-$-norm (see \cite{bjorndiss} Lemma 3.1). This estimate improves the corresponding estimate at the end of the proof of Lemma 5 in \cite{myhombif} as needed. All constants are uniform in $L\in K_j(L^*)$. 

\medskip
Items 3 and 4 improve the estimate of Theorem 1 in  \cite{myhombif} using Lemma~\ref{l:proj-est} as follows. Theorem 1 in \cite{myhombif} states
\[
\|(W_j,\hat W_j)\|_{\eta,L} \leq C(|\mu| + |\om_j| + \rme^{(\eta -\kappa_j)L}),
\]
where for $j\in\virI_\rmE$ we can set $\eta=0$ since there is no centre direction.

To improve this we consider $W_j$ and $\hat W_j$ separately and note that the coupling of these only enters through $\coupnl_j$, while the dependence of $W_j$ on $\hat \om^\rmu_j$ and $\hat W_j$ on $\om_j^\rms$ is only by $\couplin_j$. On account of~\eqref{e:couplin-est}, \eqref{e:couplin-est2}, the decomposition of $\om$ in the claimed separate estimates of $W_j$ and $\hat W_j$ follows. The estimates~\eqref{e:b-est-u},~\eqref{e:b-est-s} prove the claimed separation for the remainder term $b_j(L)$.  Again note that $\eta=0$ is allowed for $j\in\virI_\rmE$. 

It remains to estimate $\coupnl_j$ and to prove the pointwise estimates for $W_j^\rms = \Phi_j^\rms W$ and $\hat W_j^\rmu = \hat \Phi_j^\rms \hat W$. 
We first consider the pointwise estimates. By definition of $\calG_{j,1}$
\[
W_j^\rms(x) = \Phi_j^\rms(x,0)\om_j^\rms + \int_0^x\Phi_j^\rms(x,y)g_j(W_j(y),y;\mu)\rmd y,
\]
and so using \eqref{e:trichest} and \eqref{e:g-est-s}, for a weight $\eta$ to be determined there is $C>0$ such that,
\begin{eqnarray*}
\rme^{\eta x}|W_j^\rms(x)| &\leq& C\left(\rme^{(\eta-\kappa_j^\rms)x}|\om_j^\rms| + \int_0^x\rme^{(\eta-\kappa_j^\rms)x +\kappa_j^\rms y}
\left((|W_j^\rms(y)| + \rme^{(\delta_j-\kappa_j^\rms) y})|W_j(y)|^2 \right.\right.\\
&& \left.\left.+ |\mu|(|W_j(y)| + \rme^{(\delta_j-\kappa_j^\rms)y})\right) \rmd y
\right)\\
&\leq& C\left(\rme^{(\eta-\kappa_j^\rms)x}|\om_j^\rms| 
+ \int_0^x\rme^{(\eta-\kappa_j^\rms)x +\kappa_j^\rms y}
\left((\rme^{-\eta y}\|W_j^\rms\|^+_{\eta,L} + \rme^{(\delta_j-\kappa_j^\rms) y}) \right.\right.\\
&& \left.\left.\times\;\rme^{-2\eta y}(\|W_j\|_{\eta,L}^+)^2
 + |\mu|(\rme^{-\eta y}\|W_j^\rms\|^+_{\eta,L} + \rme^{(\delta_j-\kappa_j^\rms)y})\right) \rmd y\right).
\end{eqnarray*}
Taking maxima over $x$ this implies for any $0<\eta<\kappa_j^\rms$ (and $0\leq \eta < \kappa_j^\rms$ for $j\in\virI_\rmE$) that
\[
\|W_j^\rms\|_{\eta,L}^+ \leq C\left(|\om_j^\rms| + (1+\|W_j^\rms\|_{\eta,L}^+)(\|W_j\|^+_{\eta,L})^2 + |\mu|\right),
\]
and in particular for any $\delta>0$ there exists $C>0$ such that
\[
\|W_j^\rms\|_{(\delta-\kappa_j^\rms),L}^+ \leq C \; \Leftrightarrow \; |W_j^\rms(x)| \leq C\rme^{(\delta - \kappa_j^\rms)x}, \; x\in [0,L].
\]
The estimate for $\hat W_j(x)$ is completely analogous, only now $\kappa_j^\rmu$ bounds $\eta$.

We now turn to the required estimate involving $\coupnl_j$.
Substituting \eqref{e:est-stab}, \eqref{e:est-unstab} into \eqref{e:g-est-s} and~\eqref{e:g-est-u} gives $C>0$ such that
\begin{eqnarray}
|g^\rms_j(w,x;\mu)| &\leq& 
C\left( \rme^{(\delta_j-\kappa_j^\rms)x}(|w|^2 + |\mu|) \right) \label{e:g-est-s2}\\
|\hat g^\rmu_j(\hat w,\hat x;\mu)| &\leq& 
C\left( \rme^{(\delta_j-\kappa_j^\rmu)|\hat x|}|(
|\hat w|^2 + |\mu|) \right).
\label{e:g-est-u2}
\end{eqnarray}

Concerning $\coupnl_j$ these estimates yield (suppressing $y,\mu$ in $\hat g_j$)
\begin{eqnarray*}
|\int_0^{-L}\hat \Phi_j^\rmu(-L,y)\hat g_j(\hat W_j)\rmd y| &\leq& C\int_0^{-L}\rme^{-\kappa_j^\rmu(L+y)}\rme^{(\delta_j-\kappa_j^\rmu)|y|}(|\hat W_j|(y)|^2 + |\mu|)\rmd y\\
&\leq& C(\int_0^{-L}\rme^{-\kappa_j^\rmu L}\rme^{-\delta_j |y|}(\|\hat W_j\|^-_{\delta,L})^2\rmd y + |\mu|)\\
&\leq& C\left( 
\rme^{-\kappa_j^\rmu L}(\|\hat W_j\|^-_{\eta_j,L})^2 + |\mu| \right).
\end{eqnarray*}

Analogously, 
\[
|\int_0^L \Phi_j^\rms(L,y) g_j(W_j,y,\mu) \rmd y |\leq
 C \left(\rme^{-\kappa_j^\rms L}(\|W_j\|^+_{\eta_j,L})^2 + |\mu|\right).
\]
Hence, using Lemma~\ref{l:proj-est} there is $C>0$ depending on $\delta_j>0$ such that
\begin{eqnarray}
|P_j^\rmu(L)\coupnl_j(W_j,\hat W_j;L,\mu)| &\leq& 
C(R_j(\|\hat W_j\|^-_{\eta_j,L})^2 + R_j\hat R_j (\|W_j\|^+_{\eta_j,L})^2 + |\mu|),
\label{e:coupnl-est}\\
|P_j^{\rms\rmc}(L)\coupnl_j(W_j,\hat W_j;L,\mu)| &\leq& 
C(\hat R_j R_j(\|\hat W_j\|^-_{\eta_j,L})^2 + \hat R_j(\|W_j\|^+_{\eta_j,L})^2 + |\mu|).
\label{e:coupnl-est2}
\end{eqnarray}
This completes the proof of the claimed estimates.
~\hfill{}$\Box$

Note that the differences in this result between an equilibrium ($j\in\virI_\rmE$) and a periodic orbit ($j\in\virI_\rmP$) are twofold: 
\enum
\item The ranges of $L$ values measuring the time spent near $p_j$ are a semi-infinite interval for $j\in \virI_\rmE^\rmo$ and a discrete infinite (phase coherent) sequence for $j\in \virI_\rmP^\rmo$,
\item In the periodic case the exponential weight $\eta_j$ must be strictly positive.
\end{list}

\subsection{Stable and unstable manifolds}

For homoclinic or heteroclinic connections it is helpful to parametrise the stable and unstable manifolds of $p_j$ in the same way as above. This is in fact simpler than the general case and we can set $L$ in the definition of $\calG_j$ to infinity, which gives, for $j\in\virI^\rmo$,
\[
\calG_j^\infty(w;\om_j^\rms,\mu)(x) :=\left(
  \begin{array}{*{1}{l}}
    \Phi_j^\rms(x,0)\om_j^\rms\\ + 
    \int_0^x\Phi_j^\rms(x,y)
    g_j(w(y),y;\mu)\dy\\
    +\int_\infty^x\Phi_j^{\rmc\rmu}(x,y)
    g_j(w(y),y;\mu)\dy
  \end{array}
\right)
\]

\[
\hat\calG_j^\infty(w;\hat\om_j^\rmu,\mu)(\hat x) :=\left(
  \begin{array}{*{1}{l}}
    \hat\Phi_j^\rmu(\hat x,0)\hat\om_j^\rmu\\
    +\int_{-\infty}^{\hat x}\hat\Phi_j^{\rms\rmc}(\hat x,y) 
    \hat g_j(\hat w(y),y;\mu)\dy\\
    +\int_0^{\hat x}\hat\Phi_j^\rmu(\hat x,y)
    \hat g_j(\hat w(y),y;\mu)\dy\\    
  \end{array}
\right)
\]
The same change in the proof of Theorem~\ref{t:1} gives the following corollary concerning the parametrization of un/stable manifolds $\calW^{\rms/\rmu}(p_j)$.

\begin{corollary}\label{c:fibers}
	There exists $\eps>0$ such that for all $(\mu,\om_j)\in \R^d\times \Omega^\rmf_j$ with $|\mu| + |\om_j|\leq \eps$ the operators $\calG_j^\infty$ and $\hat\calG_j^\infty$ have unique $C^k$ smooth fixed points $W_j^\infty(x;\om_j^\rms,\mu)$ and $\hat W_j^\infty(\hat x;\hat\om_j^\rms,\mu)$ satisfying the following.

\enum
\item For $j\in \virI_\rmE$ the maps $\om_j^\rms \mapsto W_j^\infty(0;\om_j^\rms,\mu)(0)$ and $\hat\om_j^\rmu \mapsto \hat W_j^\infty(0;\hat\om_j^\rmu,\mu)(0)$ parametrise $\calW^\rms(p_j)$ and $\calW^\rmu(p_j)$ near $q_j(0)$ and $\hat q_j(0)$ over $E_j^\rms(0)$ and  $\hat E_j^\rmu(0)$, respectively. 
\item For $j\in \virI_\rmP$, $\alpha\in[0,T_j)$ the maps $\om_j^\rms \mapsto W_j^\infty(\alpha;\om_j^\rms,\mu)$ and $\hat\om_j^\rmu \mapsto \hat W_j^\infty(\alpha;\hat\om_j^\rmu,\mu)$ parametrise the strong stable and unstable fibers of $p_j$ with phase $\alpha_j+\alpha$ over $E_j^\rms(0)$ and $\hat E_j^\rmu(0)$, respectively.
\item Theorem~\ref{t:1}(3) holds for $W_j^\infty$ with $R_j=0$ and $\hat W_j^\infty$ with $\hat R_j=0$.
\end{list}
\end{corollary}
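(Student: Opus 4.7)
My plan is to mirror the proof of Theorem~\ref{t:1} on the half-line. I would replace $X_{\eta_j,L}$ by weighted spaces $X^+_{\eta_j,\infty}$ and $X^-_{\eta_j,\infty}$ of continuous maps on $[0,\infty)$ and $(-\infty,0]$ respectively, with norms $\|w\|^+_{\eta_j,\infty} = \sup_{x\ge 0}|\rme^{\eta_j x}w(x)|$ and analogously for $\hat w$. In the limit $L\to\infty$ all the boundary terms that linked the two half-orbit pieces in $\calG_j$---namely $b_j(L)$, $\couplin_j$, $\coupnl_j$ and the projections $P_j^{\rms/\rmc/\rmu}(L)$---disappear, so $\calG_j^\infty$ and $\hat\calG_j^\infty$ decouple into two independent problems. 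The improper integrals against $\Phi_j^{\rmc\rmu}$ and $\hat\Phi_j^{\rms\rmc}$ starting at $\pm\infty$ converge absolutely: by the trichotomy bounds \eqref{e:trichest} combined with \eqref{e:g-est-s2} and \eqref{e:g-est-u2}, the integrands decay at rate $\kappa_j^\rms-\delta_j$ and $\kappa_j^\rmu-\delta_j$ respectively, which the weighted norm absorbs. The contraction and uniform Lipschitz estimates used in the proof of Theorem~\ref{t:1}(1) then transfer verbatim, with no loss since no $L$-dependent constant enters, yielding the unique $C^k$ smooth fixed points $W_j^\infty$ and $\hat W_j^\infty$ via the implicit function theorem.

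Second, I would verify the geometric identification. For $j\in\virI_\rmE$, the fixed point $W_j^\infty(\cdot;\om_j^\rms,\mu)$ decays exponentially at rate $\kappa_j^\rms-\delta_j$ (by the analogue of Theorem~\ref{t:1}(3) with $R_j=0$), so $q_j(x)+W_j^\infty(x;\om_j^\rms,\mu)$ is a solution of \eqref{e:ode} converging to $p_j$ as $x\to\infty$, and hence lies in $\calW^\rms(p_j)$. The map $\om_j^\rms\mapsto W_j^\infty(0;\om_j^\rms,\mu)$ has linearisation $\Id|_{E_j^\rms(0)}$ at $(\om_j^\rms,\mu)=(0,0)$, so by the inverse function theorem it is a local $C^k$ diffeomorphism from a neighbourhood of $0$ in $E_j^\rms(0)$ onto a neighbourhood of $q_j(0)$ inside $\calW^\rms(p_j)$, which matches the correct dimension. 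For $j\in\virI_\rmP$ the same argument shows that the reconstructed trajectory converges to $p_j$ at the strong stable rate $\kappa_j^\rms-\delta_j$; this exponential decay uniquely determines its asymptotic phase, which by the construction of $q_j$ and the shift by $\alpha\in[0,T_j)$ equals $\alpha_j+\alpha$, placing the trajectory in the strong stable fiber of that phase. Onto-ness in a neighbourhood of $q_j(\alpha)$ within this fiber again follows from the identity linearisation and the fact that the fiber has dimension $\dim E_j^\rms(0)$. The unstable cases are completely analogous.

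Finally, item (3) is immediate from the book-keeping in the proof of Theorem~\ref{t:1}(3): the $R_j$ and $\hat R_j$ terms entered only through the coupling contributions \eqref{e:b-est-u}, \eqref{e:b-est-s}, \eqref{e:couplin-est}, \eqref{e:couplin-est2}, \eqref{e:coupnl-est} and \eqref{e:coupnl-est2}, all of which are absent in $\calG_j^\infty$ and $\hat\calG_j^\infty$, so the pointwise and norm bounds hold with $R_j=0$ for $W_j^\infty$ and $\hat R_j=0$ for $\hat W_j^\infty$. The main delicate point I anticipate is in the periodic case: one must check that the strictly positive weight $\eta_j>0$ on the half-line is compatible with forcing fixed points into \emph{strong} stable fibers rather than the larger centre-stable manifold, since only then is the phase identification with $\alpha_j+\alpha$ meaningful. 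This is what the normalisation $\Psi_j^\rmc(0)W_j^\infty(0)=0$ built into $\calG_j^\infty$ via the integral from $\infty$ achieves, and it is the one place where a little care beyond copying Theorem~\ref{t:1} is needed.
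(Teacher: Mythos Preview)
Your proposal is correct and follows the same approach as the paper, which gives only a one-line justification (``The same change in the proof of Theorem~\ref{t:1}''); you have simply expanded the details that the paper leaves implicit. One small point: for the initial contraction argument establishing convergence of the improper integrals, you should cite the basic estimates of Lemma~\ref{l:g-est} (or \eqref{e:g-est}) rather than \eqref{e:g-est-s2}--\eqref{e:g-est-u2}, since the latter are derived \emph{after} the fixed point exists; this is only a matter of exposition and does not affect the argument.
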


\section{Bifurcation equations}\label{s:bifeq}

\begin{figure}
\begin{center}
\input{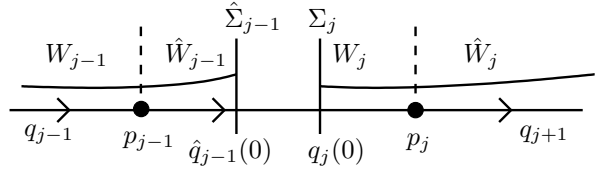}
\caption{Schematic illustration of adjacent $j$-variations.}
\label{f:notation}
\end{center}
\end{figure}

Based on the results of the previous section we derive reduced
equations whose solutions parametrise all solutions of \eqref{e:ode} that are near the chosen itinerary $\calC$. Throughout this section we take $\om_j=(\om_j^\rms,\hat\om_j^\rmu)\in\Omega_j$.

In order to reconstruct solutions of \eqref{e:ode} from the variations $(W_j,\hat W_j)$ about adjacent $q_j$ these need to fit together continuously. By definition of the variations this means solving (up to the flow direction as shown below) the system of equations
\begin{equation}\label{e:1}
W_j(x=0;\om^\rms_j,\hat\om^\rmu_j,\mu,L_j) = \hat{W}_{j-1}(\hat x=0;\om_{j-1}^\rms,\hat\om_{j-1}^\rmu,\mu,L_{j-1}),
\end{equation}
where $j\in \virI^\rmo$. System \eqref{e:1} is closed if $\virI=\mathbb{Z}$, and closing conditions are required for $\virI$ with upper or lower bound.

In case of finite $\virI$ reconstructed solutions are either heteroclinic from $p_1$ to $p_m$ (`het' in short) and, for $p_1=p_m$, homoclinic to $p_1$ (`hom') or periodic orbits (`per'). Note that the same periodic orbit is a solution for any periodically prolonged itinerary, and in this case we implicitly assume $\calC$ is a heteroclinic cycle. The remaining cases are semi-unbounded $\virI$ for which we require the corresponding solution to lie in the un/stable manifold of $p_j$ with the largest or smallest index, respectively.

More formally, this means

\begin{list}{}{
\setlength{\labelwidth}{20mm}
\setlength{\leftmargin}{20mm}}
\item[`het':] $q_2(0) + W_2(0) \in \calW^\rmu(p_1)$ and $\hat{q}_{m-1}(0) + \hat{W}_{m-1}(0) \in \calW^\rms(p_m)$, i.e.,  
\begin{eqnarray*}
W_2(0;\om_2,\mu,L_2) &=& \hat W_1^\infty(\alpha_1;\om_1^\rmu,\mu),\\ \hat W_{m-1}(0;\om_{m-1},\mu,L_{m-1}) &=& W_m^\infty(\alpha_m;\om_m^\rms,\mu),
\end{eqnarray*}
\item[`hom':]  (really the same as `het', here extra only for clarity) \\ $q_2(0) + W_2(0) \in \calW^\rmu(p_1)$ and $\hat{q}_{m-1}(0) + \hat{W}_{m-1}(0) \in \calW^\rms(p_1)$, i.e.,
\begin{eqnarray*}
W_2(0;\om_2,\mu,L_2) &=& \hat W_1^\infty(\alpha_1;\om_1^\rmu,\mu),\\ \hat W_{m-1}(0;\om_{m-1},\mu,L_{m-1}) &=& W_1^\infty(\alpha_1;\om_1^\rms,\mu),
\end{eqnarray*}
\item[`per':] $W_1(0;\om_1,\mu,L_1) = \hat W_m(0;\om_m,\mu,L_m)$,
\item[`semi-':] $\hat{q}_{0}(0) + \hat{W}_{0}(0) \in \calW^\rms(p_1)$, i.e.,
\begin{eqnarray*}
\hat W_{0}(0;\om_{0},\mu,L_{0}) &=& W_1^\infty(\alpha_1;\om_1^\rms,\mu),
\end{eqnarray*}
\item[`semi+':] $q_2(0) + W_2(0) \in \calW^\rmu(p_1)$, i.e.,  
\begin{eqnarray*}
W_2(0;\om_2,\mu,L_2) &=& \hat W_1^\infty(\alpha_1;\om_1^\rmu,\mu).
\end{eqnarray*}
\end{list}

In order to unify notation for these cases, we set $L_1=\infty$ for `semi$\pm$' and $L_1=L_m=\infty$ for `het' and `hom' so that all equations are of the form \eqref{e:1}. We thus omit the superscript `$\infty$' and take indices modulo $m+1$ for `per'. System \eqref{e:1} then needs to be solved for $j\in \virI^\rmo$ with the modified definition 
\[
\virI^\rmo:=\begin{cases}
\virI \mod m+1\mbox{ for `per'},\\
\virI \setminus \{1\}\mbox{ for `semi$-$', `hom' and `het'},\\
\virI\mbox{ for `semi$+$' and when }\virI=\mathbb{Z},
\end{cases}
\]
Free travel time parameters are then $L_j$ with $j\in \virI^\rmL$ where
\[
\virI^\rmL:=\begin{cases}
\virI^\rmo\mbox{ for `semi$\pm$', `per' and if }\virI^\rmo = \mathbb{Z},\\
\{2,\ldots,m-1\}\mbox{ for `hom' and `het'}.
\end{cases}
\]
The parameters of fixed points of $(\calG_j)_{j\in\virI^\rmo}$ are thus $\om_j$, $j\in \virI^\rmo$, and $L_{j}$, $j\in \virI^\rmL$, and the actual system parameters $\mu\in \R^d$. In the Lyapunov-Schmidt reduction we first use the coordinate parameters $\om_j$, and then, if needed, express the system parameters $\mu$ through the time parameters $L_j$ and possibly remaining coordinate parameters. This also determines the generic minimum number of parameters needed for the unfolding of the part of the network that is visited by the selected itinerary.

\medskip
If $\calC$ contains a sequence of adjacent periodic orbits, the requirement of equal asymptotic phase in Theorem~\ref{t:1} for in- and outflow at each of these may require different $q_{j+1}(0)$ and $\hat q_j(0)$, i.e.\ $x_j\neq 0$ in the definition of $\hat q_j$. In that case solving \eqref{e:1} requires a nontrivial shift in the flow direction. However, this direction is not directly available since we removed the flow direction from the coordinate parameters $\om_j$.

To trivialise the matching in this direction we change coordinates locally in a neighbourhood of the trajectory segments $Y_j:=\{q_j(x) : 0\leq x \leq \max\{T_j,T_{j-1}\}\}$ for all $j\in\virI_\rmP$, to obtain `flow box' coordinates so that the flow is parallel to $Y_j$ in a tube about it, see Figure~\ref{f:match}. Since $\calC^*$ is finite we can choose a uniform tube radius. Note that this change of coordinates is independent of the changes near $p_j$ performed above.

\begin{figure}
\begin{center}
\input{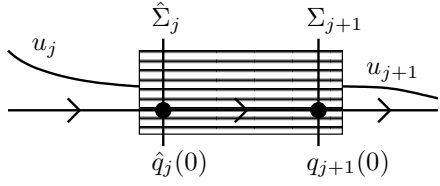}
\caption{Schematic plot of the flow box near the trajectory between $\hat q_j(0)$ and $q_{j+1}(0)$. A left orbit $u_j$ that enters the box at the same coordinate in $\hat \Sigma_j$ as a right orbit $u_{j+1}$ in $\Sigma_{j+1}$ lies on the same trajectory as $u_{j+1}$.}
\label{f:match}
\end{center}
\end{figure}

\begin{remark}\label{r:flow}
\enum
\item Since fixed points of $\calG_j$ are coordinates of trajectories (Corollary~\ref{c:1}) there is a bijection between solutions (up to time shifts) of system \eqref{e:1} with all closing conditions and solutions of~\eqref{e:ode} that stay in a certain neighbourhood of the itinerary, if we require minimal period for periodic solutions.
\item Due to the flow box coordinates near problematic $q_j(0)$, it is in fact not necessary to solve \eqref{e:1} in the flow direction $\dot q_j(0)$: Even if the orbits reconstructed from fixed point components $W_j$ of $\calG_j$ and $\hat W_{j-1}$ of $\calG_{j-1}$ do not fit together in the flow direction, a unique trajectory is selected, see also Figure~\ref{f:match}. Proof: By construction, all trajectory segments 
\[
u_j(x) = \left\{
\parbox{80mm}{$q_j(x) + W_j(x)\,,\quad x\in[0,L_j]$\\
$\hat q_j(x-2L_j) + \hat W_j(x-2L_j)\,,\quad x\in [L_j,2L_j]$,}
\right.
\]
$j\in \virI^\rmL$, are continuous at $L_j$. 
A priori we would require the jumps $u_j(2L)- u_{j+1}(0)$ to vanish. However, since the vector field in the flow box is parallel to $q_j$ the coordinates in the un/stable trichotomy spaces $E_{j+1}^{\rms/\rmu}(x)$ and $\hat E_j^{\rms/\rmu}(x)$ do not change within the flow box. Therefore, the segments $u_j$ and $u_{j+1}$ lie on the same trajectory, if and only if their $j$- and $(j+1)$-variations have same coordinates in $E_{j+1}^{\rms/\rmu}(0)$ and $\hat E_j^{\rms/\rmu}(0)$. 
\end{list}
\end{remark}

\medskip
Recall the spaces of coordinate parameters $H_j^\rms:=Q_j^\rms E_j^\rms(0)$ and $\hat H_j^\rmu:=\hat Q_j^\rmu E_j^\rmu(0)$, and that these do not contain the flow direction.

To motivate the following definitions, notice that $\om_j^\rms$ and $\hat\om_{j-1}^\rmu$ explicitly appear in \eqref{e:1} when substituting the definition of $\calG_j$ at $x=\hat x =0$. In fact, \eqref{e:1} projected onto $E_j := H_j^\rms + \hat H_{j-1}^\rmu$ can be solved using $\om_j$, and we therefore define
\[
P_j := \Proj( E_j^\perp , E_j).
\]
Lyapunov-Schmidt reduction now consists of solving the system \eqref{e:1} projected first by $P_j$ and then substituting the result into the projection by $\Id-P_j$. In this process the flow direction need not be considered as shown in Remark~\ref{r:flow}. Therefore, the directions that are unreachable by coordinate parameters are 
\[
E_j^\rmb := \left( E_j^\rms(0) + E_j^\rmu(0) \right)^\perp.
\]
Hence, $d_j := \dim(E_j^\rmb)$ is the number of reduced equations at $q_j(0)$ that need to be solved by system parameters. 

\begin{definition}
Let $\virI^\rmb\subset \virI^\rmo$ be the set of indices for which $d_j \geq 1$. 
\end{definition}

\medskip
We set $d:=\sum_{j\in \virI^\rmb\cap \virI_\rmred} d_j$ and will show that this is the number of parameters needed to unfold $\calC_\rmred$ and thus to locate the solutions selected by the choice of $\calC$. We call additional parameters \emph{auxiliary}. 

\medskip
If $H_j^\rms\cap \hat H_{j-1}^\rmu$ is non-trivial, then the representation of $\Rg P_j$ by $\om_j^\rms +\hat \om_{j-1}^\rmu$ is not unique. To make it unique, we remove the intersection from $H_j^\rms$ and denote the remaining coordinate parameters by $v_j\in H_j^\rms\cap \hat H_{j-1}^\rmu$. More precisely,
we define 
\begin{eqnarray}
\tilde P_j &:=& \Proj( [H_j^\rms\cap \hat H_{j-1}^\rmu]^\perp, H_j^\rms\cap \hat H^\rmu_{j-1} )\label{e:tildeP}\\
\tilde H_j^\rms &:=& \ker(\tilde P_j) \cap H_j^\rms,\nonumber
\end{eqnarray}
so that for $\om_j^\rms \in H_j^\rms$ there exist unique $v_j\in \Rg \tilde P_j$ and $\tilde\om_j^\rms\in \tilde H_j^\rms$ with $\om_j^\rms = v_j + \tilde\om_j^\rms$.

\begin{definition}
Let $\virI^\rmt\subset \virI^\rmo$ be the set of indices for which $\dim(\Rg(\tilde P_j))\geq 1$.
\end{definition}

\medskip
We denote the collection of all these coordinate parameters by
\[
\bar v = (v_j)_{j\in \virI^\rmt}\in \calV:=\prod_{j\in \virI^\rmt} \Rg \tilde P_j,
\] 
and endow $\calV$ with the sup-norm. Parameters $v_j$ occur if the tangent spaces of stable and unstable manifolds coincide in more than just the flow direction. A transverse heteroclinic set of two or more dimensions occurs for $j\in \virI^\rmt \setminus \virI^\rmb$, which means that the `linear' codimension 
\[
	n+1-\dim{\calW^{\rms}(p_j)} - \dim{\calW^{\rmu}(p_{j-1})} 
\]
is negative and gives the generic dimension of tangency minus the flow direction. This only uses information from the un/stable dimensions at the asymptotic states, and tangency of the manifolds may be higher dimensional, and can also occur for positive linear codimension. The above defined $d_j$ includes this by accounting for the intersection of $H_j^\rms$ and $\hat H_j^\rmu$, and is always larger than or equal to the linear codimension. We therefore refer to $d_j$ as the \emph{codimension} of $q_j$. Note that transverse heteroclinic connections occur for $j\in\virI^\rmo\setminus\virI^\rmb$ and tangent directions transverse to the flow (a tangent heteroclinic connection) occurs for $j\in \virI^\rmt\cap \virI^\rmb$, see Figure~\ref{f:tang}. 

\begin{figure}
\begin{center}
\input{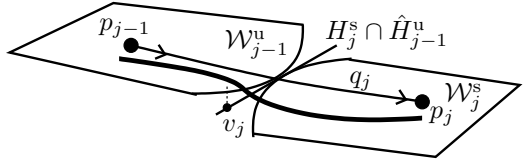}
\caption{Illustration of the notation for a tangent heteroclinic connection.}
\label{f:tang}
\end{center}
\end{figure}

\medskip
To capture the leading order effect of parameter variations on the $j$-variation we define the Melnikov-type integral maps  for $j\in \virI^\rmb$
\[
\calM_j : \R^d \to E_j\,,\; \mu \mapsto \sum_{r=1}^{d_j} \int_\R\langle \partial_{\mu} f(q_j(y);0)\mu\,,\,a_{j,r}(y)\rangle\,\rmd y\; a_{j,r}^0,
\]
where  $a_{j,r}^0\in\R^n$, $r=1,\ldots,d_j$ is a basis of $E_j^\rmb$ with reducible indexing, and $a_{j,r}(y)$ is the solution to the adjoint linear equation 
\[
\dot a = -\partial_u (f(q_j(y);0))^t a,
\]
with $a(0) = a_{j,r}^0$. On account of \eqref{e:trichest} $\calM_j$ is well-defined.

Note that auxiliary parameters $\tilde\mu$ lead to a modified map
\[
(\mu,\tilde\mu) \mapsto \calM_j\mu + \tilde\calM_j\tilde\mu \in E_j^\rmb.
\]
The complete Melnikov-map for $d$ parameters is then
\begin{equation}\label{e:mel}
\calM: \R^d \to \bar E^\rmb := \prod_{j\in \virI^\rmb} E_j^\rmb\,,\; \mu\mapsto (\calM_j \mu)_{j\in \virI^\rmb}.
\end{equation}

\begin{hypothesis}\label{h:mel}~
$\ker(\calM)=\{0\}$.
\end{hypothesis}

Repeated elements in $\calC$ mean repeated rows in $\calM$. For an equation $\calM\mu=X$ this means that the solvability conditions on the coordinates $X_j$ of $X$ in $E_j^\rmb$ are
$X_j = X_{j'}$ whenever $q_j=q_{j'}$, $j,j'\in \virI^\rmb$.
To solve the remaining part of $\calM\mu = X$ separately in each $E_j^\rmb$ as far as possible we change parameters as follows. Under Hypothesis~\ref{h:mel} the Melnikov map $\calM_\rmred: \R^d\to \prod_{j\in\virI^\rmb\cap \virI_\rmred} E_j^\rmb$ s invertible. Set $\check\mu =\calM_\rmred \mu$ and $\check f(u;\check\mu) := f(u;\calM_\rmred^{-1} \check\mu)$ and omit the `check' in the following. In the new parameters $\calM_\rmred$ is the identity on $E_j^\rmb$ in the sense that 
\[
\calM_j\mu = \sum_{r=1}^{d_j}\mu_{j_r} a^0_{j,r},
\]
for a unique subcollection $\bar\mu_j=(\mu_{j_r})_{r=1,\ldots,d_j}$ of parameters, and $\mu \cong (\bar\mu_j)_{j\in \virI_\rmred\cap \virI^\rmb}$. 

To unify notation of bifurcation equations for parameters and solvability conditions we define \emph{itinerary parameters} $\mu_j^*$ for all $j\in \virI^\rmb$ as follows. Set $\mu_j^*=\bar\mu_j$ for $j\in \virI_\rmred\cap \virI^\rmb$ and, for $j\in \virI^\rmb\setminus \virI_\rmred$, $\mu_j^*=\bar\mu_{j'}$ whenever $j'\in\virI^\rmb$ is such that $q_j=q_{j'}$.
Due to the above change of parameters, solutions to $\calM_j\mu = X$ can be cast simply as
\[
\mu_j^* = X_j, \quad j\in \virI^\rmb.
\]

\medskip
In preparation of the main theorem statement we define for $j\in \virI^\rmb\cap \virI^\rmt$ the map
\begin{eqnarray*}
\calT_j(v_j) &:=& -\sum_{r=1}^{d_j}\int_\R \langle\partial_{uu} f(q_j(y);0)(\Phi_j^*(y) v_j)^2 \,,\, a_{j,r}(y)\rangle \, \rmd y \,a_{j,r}^0,
\end{eqnarray*}
which measures the quadratic separation of the tangent manifolds by $v_j\in \Rg(\tilde P_j)$ and
\[
\Phi_j^*(y):=
\begin{cases}
\Phi_j^\rms(y), &y>0\\
\hat\Phi_{j-1}^\rmu(y), &y\leq 0.
\end{cases}
\]
On account of  \eqref{e:trichest} $\calT_j(v_j)$ is well-defined and $\calT_j(v_j) = \calO(|v_j|^2)$. 
\medskip
The following terms will capture the leading order effect of the neighbouring itinerary elements and give rise to the expansion of the bifurcation equations.
\begin{eqnarray*}
B_j^\rmu(L_j) &:=& \Phi_j^\rmu(0,L_j)P_j^\rmu(L_j)b_j(L_j),\\
B_j^\rms(L_j) &:=& \hat\Phi_j^\rms(0,-L_j)P_j^\rms(L_j)b_j(L_j).
\end{eqnarray*}

From \eqref{e:trichest}, \eqref{e:b-est-u} and \eqref{e:b-est-s} we infer
\begin{eqnarray}
|B_j^\rmu(L_j)| &=& \calO(\rme^{(\delta-\kappa_j^\rmu)L_j}R_j) = \calO(R_j^2),\label{e:est17}\\
|B_j^\rms(L_j)| &=& \calO(\rme^{(\delta-\kappa_j^\rms)L_j}\hat R_j) = \calO(\hat R_j^2).\label{e:est18}
\end{eqnarray}

The following hypothesis concerns intersections of the heteroclinic orbits and the spaces $E_j^\rmb$ as well as $E_j$ with leading un/stable fibers and trichotomy spaces, respectively, and excludes flip bifurcations.

\begin{hypothesis}\label{h:lead} Let $\nu_j^{\rms/\rmu}$ be the leading stable/unstable eigenvalues or Floquet exponents at $p_j$ with $\Im(\nu_j^{\rms/\rmu})\in[0,2\pi)$, and $\kappa_j^{\rms/\rmu}:=\Re(\nu_j^{\rms/\rmu})$. 
\begin{enumerate}
\item $\displaystyle\limsup_{x\to -\infty}\frac{\ln(\hat q_j(x))}{x} = \kappa_j^\rmu$,
\item $\displaystyle \limsup_{x\to \infty}\frac{\ln(q_{j-1}(x))}{-x} = \kappa_{j-1}^\rms$,
\item $\exists r^\rmu \in\{1,\ldots,d_j\}$ such that $\displaystyle \limsup_{x\to \infty}\frac{\ln(a_{j,r^\rmu}(x))}{-x} = \nu_j^\rmu$, 
\item $\exists r^\rms \in\{1,\ldots,d_j\}$ such that 
$\displaystyle \limsup_{x\to -\infty}\frac{\ln(a_{j,r^\rms}(x))}{x} = \nu_{j-1}^\rms$,
\item $\exists v\in E_j$ such that $\displaystyle \limsup_{x\to \infty}\frac{\ln(\Phi_j^\rmu(x,0)v)}{x} = \nu_j^\rmu$,
\item $\exists v\in E_j$ such that $\displaystyle \limsup_{x\to -\infty}\frac{\ln(\hat\Phi_{j}^\rms(x,0)v)}{-x} = \nu_{j-1}^\rms$.
\end{enumerate}
\end{hypothesis}

\medskip
To emphasise the local coupling in the itinerary and to show conjugacy to symbolic dynamical systems, see \S\ref{s:chaos}, for an arbitrary $0<\lambda<1$, we use the weighted norm 
\begin{equation}\label{e:normL}
\|\bar L\|_\calL := \sum_{j\in J^\rmL} \lambda^j |L_j|
\end{equation}
on the space $\calL(L^*):=\prod_{r\in \virI^\rmL}K_r(L^*)$ of the sequence of travel time parameters $L_j$.

In the precise formulation of the main result given next, we identify $E_j^\rmb$ with the isomorphic $\R^{d_j}$ and for $j\in\virI^\rmb$ the following denote linear maps, where $M\subset\R^{d_j\times d_j}$ are diagonal matrices.

$\beta_j': \Rg(\tilde P_{j+1})\to \R^{d_j}$, 
$\gamma_j': \Rg(\tilde P_{j-1})\to \R^{d_j}$, 

$\beta_j'': E_{j+1}^\rmu(0) \to \R^{d_j}$, 
$\gamma_j'': \hat E_{j-2}^\rms(0) \to \R^{d_j}$, 

$\eqa_j, \eqb_j \in M$,
$\eqa_j': \Rg(\tilde P_{j+1})\to M$, 
$\eqb_j': \Rg(\tilde P_{j-1})\to M$,

$\eqa_j'': E_{j+1}^\rmu(0)\to M$, 
$\eqb_j'': \hat E_{j-2}^\rms(0) \to M$.

\noindent Here $\ker(\beta_j''), \ker(\eqa_j'') \supset \ker P_{j+1}$, $\ker(\gamma_j''), \ker(\eqb_j'') \supset \ker P_{j-1}^\rmb$ so $\beta_j'' P_{j+1} = \beta_j''$ etc. 

\noindent Let $\bar{\calR} = (\calR_j)_{j\in\virI^\rmb}$, and $\Cos(\beta)= (\cos(\beta_1),\ldots,\cos(\beta_r))$ for $\beta\in\R^r$. 

\noindent Finally, $\om= (\om_j)_{j\in \virI^\rmo}$, $\tilde\om = (\tilde\om^\rms, \hat\om^\rmu) 
\in \tilde\Omega := \prod_{j\in\virI^\rmo}(\tilde H_j^\rms\times \hat H_{j-1}^\rmu)$, and $\bar E: \prod_{j\in \virI^\rmo} E_j$, where $\tilde\Omega$ and $\bar E$ are endowed with the sup-norm.

Recall that $L_j=\infty$, if $j\not\in \virI^\rmL$ and $v_j=0$ if $j\not\in\virI^\rmt$. Similarly, we make the convention that a quantity vanishes if its label is outside its range.

\begin{theorem}\label{t:2}
Under Hypotheses~\ref{h:basic} and~\ref{h:mel}, for a given itinerary $\calC$ 
with closing conditions, if required,
there exist $L^*$, $\eps_*,\eps^*>0$ depending only on $\calC_\rmred$ so that for all $\delta>0$ the following hold.

\enum
\item For all $j\in \virI^\rmb$ assume Hypothesis~\ref{h:simplestable} and~\ref{h:simpleunstable} at $p_r$ for $r=j, j\pm 1, j-2$ with leading un/stable eigenvalues $\nu_r^{\rmu/\rms} = -\kappa_r^{\rmu/\rms} + \rmi \sigma_r^{\rmu/\rms}$, respectively. Take any $\delta_{j-1}^\rms<\min\{\kappa_{j-1}^\rms,\kappa_{j-1}^\rmu,\rho_{j-1}^\rms\}$ and $\delta_j^\rmu<\min\{\kappa_j^\rmu,\kappa_j^\rms,\rho_j^\rmu\}$. For $r=j,j\pm1, j-2$ set $\delta_r=0$ if $r\in\virI^\rmb\cap\virI_\rmE$ and otherwise $\delta_r=\delta$.

There exists unique $\beta_j, \gamma_j \in\R^{d_j}$ and
$\beta_j'$, $\beta_j''$, $\gamma_j'$, $\gamma_j''$, $\eqa_j$,
$\eqb_j,$
$\eqa_j'$, 
$\eqb_j'$,
$\eqa_j''$, 
$\eqb_j''$ as above,
as well as unique $C^k$ smooth $(\mu,\tilde\om^\rms,\hat\om^\rmu,\bar\calR): B(\calV,\eps_*)\times \calL(L^*) \to B(\R^d \times \tilde\Omega \times \bar E^\rmb, \eps^*)$ such that $(\mu,\tilde\om^\rms+\bar v,\hat\om^\rmu)$ solves \eqref{e:1} for $j\in \virI^\rmo$ if and only if $\mu_j^*(\bar v, \bar L)$ satisfy \eqref{e:intro} with \eqref{e:intro2} for $j\in \virI^\rmb$. All quantities except $(\bar v, \bar L)$ have reducible indexing and
\begin{align*}
\calR_j &= \calO\left( |v_j|^3 + 
\rme^{-2\kappa_j^\rmu L_j}\left[\rme^{-\delta_j^\rmu L_j} + |v_{j+1}|(R_{j+1} + |v_{j+1}|) + R_{j+1}^3\right] \right.\\ 
&\left.+ \rme^{-2\kappa_{j-1}^\rms L_{j-1}}\left[ \rme^{-\delta_{j-1}^\rms L_{j-1}} + |v_{j-1}|(R_{j-1} + |v_{j-1}|)+ \hat R_{j-2}^3\right] 
\right),\\
\frac{\rmd}{\rmd v_j}\calR_j &= \calO(
\rme^{-(2\kappa_j^\rmu +\delta_j^\rmu)L_j} 
+ \rme^{-(2\kappa_{j-1}^\rms +\delta_{j-1}^\rms)L_{j-1}}
+ |v_j|^2).
\end{align*}

Finally, $\mathrm{Rank}(\eqa_j)\geq 1$ under Hypotheses~\ref{h:lead}(1) and (3), $\mathrm{Rank}(\eqb_j)\geq 1$ under Hypotheses~\ref{h:lead}(2) and (4). The analogous statement holds for $\eqa_{j+1}^{\rmu}$, $\eqa_{j-2}^{\rms}$.
\item For $j\in \virI^\rmo$, $\delta_j=\delta$ and $\eta_j$ as in Theorem~\ref{t:1}, and $\bar R = \sum_{i\in \virI^\rmb\cap \virI_\rmred}\hat R_{i-1} + R_i$ solutions to \eqref{e:1} with $|(\mu,\om)| \leq \eps_*$ and $|\bar L|\geq L^*$ satisfy
\begin{eqnarray*}
\|W_j(\bar v,\bar L)\|_{\eta_j,L_j}^+ &=& \calO(|v_j| + R_j + |\bar v|^2 + \bar R^2),\\
\|\hat W_j(\bar v,\bar L)\|_{\eta_j,L_j}^- &=& \calO(|v_j| + \hat R_j + |\bar v|^2  + \bar R^2).
\end{eqnarray*}
If $j\in\virI_\rmE$, then under Hypotheses~\ref{h:simplestable} and~\ref{h:simpleunstable} at $p_j$ we can take $\delta_j=0$ in $R_j$ and $\hat R_j$.
\item There exists a neighbourhood $\calU$ of $\cup_{j\in \virI^\rmo_\rmred}\{q_j(x) : x\in \R\}$ such that the set of $(\bar L,\bar v)$, $|\bar v|\leq \eps_*$, $\bar L\in \calL(L^*)$ for which there is a solution to \eqref{e:1} with $|\mu|\leq \eps_*$ is bijective to the following set of $(\mu, u)\in B(\R^d,\eps_*) \times C^0(\R,\R^n)$: $u$ solves \eqref{e:ode} with $u(0)\in\Sigma_1$ and $u(x)\in \calU$ for all $x\in \R$, and there exists $(x_j)_{j\in \virI^\rmo}\subset [0,T_u)$ with $x_{j+1}-x_j>0$ minimal such that $u(x_j)\in\Sigma_j$, where $T_u\in\R\cup \{\infty\}$ is the minimal period of $u$. 
\end{list}
\end{theorem}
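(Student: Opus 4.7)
My plan is the Lyapunov--Schmidt reduction motivated by the text preceding the statement, carried out in four stages. The setup provides, via Theorem~\ref{t:1} and Corollary~\ref{c:fibers}, coordinates $(\om_j,\mu,L_j)$ for trajectories passing near each $p_j$. Writing $F_j := W_j(0)-\hat W_{j-1}(0)\in E_j$, the matching system~\eqref{e:1} becomes $F_j=0$ for $j\in\virI^\rmo$. Evaluating the definition of $\calG_j$ at $x=0$ and using the decomposition $\om_j^\rms = v_j+\tilde\om_j^\rms$, this takes the explicit form
\[
F_j = \tilde\om_j^\rms + v_j - \hat\om_{j-1}^\rmu + B_j^\rmu(L_j) - B_{j-1}^\rms(L_{j-1}) + N_j(\om,\mu,\bar L),
\]
where $N_j$ absorbs the integral nonlinearities from $g_j,\hat g_{j-1}$ together with the coupling contributions $\couplin_\bullet\om$ and $\coupnl_\bullet$.

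I next split $F_j$ along the $P_j$-projection and its complement in $E_j^\rmb$. On the $P_j$-part the linear principal map $(\tilde\om_j^\rms,\hat\om_{j-1}^\rmu)\mapsto \tilde\om_j^\rms-P_j\hat\om_{j-1}^\rmu$ is, by construction of $\tilde P_j$ and $\tilde H_j^\rms$, an isomorphism onto its image. Combining Theorem~\ref{t:1}(3), Lemma~\ref{l:g-est} and the coupling bounds~\eqref{e:couplin-est}--\eqref{e:coupnl-est2}, the remainder $N_j$ is small uniformly in $\bar L\in\calL(L^*)$, and an implicit function theorem produces $C^k$ smooth functions $(\tilde\om^\rms,\hat\om^\rmu)(\mu,\bar L,\bar v)$ with the sizes asserted in part~(2). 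Substituting these into the complementary projection $(\Id-P_j)F_j\in E_j^\rmb$ and changing parameters via $\check\mu=\calM_\rmred\mu$ (legitimate under Hypothesis~\ref{h:mel}) gives, after Taylor expansion, equations of the form~\eqref{e:intro} for the itinerary parameters $\mu_j^*$. The tangency term $\calT_j(v_j)$ arises as the leading $v_j^2$ contribution of $g_j$ tested against the adjoint solutions $a_{j,r}$, while $\calM_j\mu$ captures the $\partial_\mu f$ contribution.

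The explicit cosine--exponential structure in~\eqref{e:intro},~\eqref{e:intro2} is then unlocked by Lemma~\ref{l:proj-est}(4), which under Hypotheses~\ref{h:simplestable} and~\ref{h:simpleunstable} yields
\[
B_j^\rmu(L_j) = \rme^{-2\kappa_j^\rmu L_j}\Cos(2\sigma_j^\rmu L_j + \beta_j)\eqa_j + \calO(\rme^{-(2\kappa_j^\rmu+\delta_j^\rmu)L_j}),
\]
and analogously for $B_{j-1}^\rms$. The nearest-neighbour couplings $\beta_j^*,\gamma_j^*,\eqa_j^*,\eqb_j^*$ appear by one further iteration of the IFT substitution: when the implicit formula for $\hat\om^\rmu_{j+1}$ from the matching at index $j+1$ is inserted into $N_j$, it carries a linear $v_{j+1}$ piece (yielding the $\beta_j'v_{j+1}$ and $\eqa_j'v_{j+1}$ terms) and a linear $B_{j+1}^\rmu(L_{j+1})$ piece (yielding the $\beta_j''$ and $\eqa_j''$ terms); the symmetric argument on the stable side gives the $\gamma_j,\eqb_j$ structure. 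Contributions from indices $j\pm 2$ and beyond are swept into $\calR_j$ using the global prefactors $\rme^{-2\kappa_j^\rmu L_j}$ and $\rme^{-2\kappa_{j-1}^\rms L_{j-1}}$ together with the exponential decay of $R_{j\pm1},\hat R_{j-2}$. The rank claims on $\eqa_j,\eqb_j$ then follow from Lemma~\ref{l:proj-est}(4) combined with Hypothesis~\ref{h:lead}: items~(1),(3) guarantee the leading-unstable direction is non-degenerate against the adjoint mode $a_{j,r^\rmu}$, and symmetrically for~(2),(4).

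Parts~(2) and~(3) then follow quickly. For~(2), substitute the IFT solution for $\tilde\om^\rms,\hat\om^\rmu$ into the pointwise bounds of Theorem~\ref{t:1}(3)--(4), using $\om_j^\rms=v_j+\tilde\om_j^\rms$ to replace $|\om_j^\rms|$ by $|v_j|$ plus a quadratic remainder. For~(3), combine Corollary~\ref{c:1} (bijection between fixed points of $\calG_j$ and trajectories in a tubular neighbourhood of $q_j$), Corollary~\ref{c:fibers} (for semi-infinite endings lying in un/stable manifolds) and Remark~\ref{r:flow} (flow-box coordinates trivialise matching in the flow direction, so that solving~\eqref{e:1} inside $E_j$ alone is enough). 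The main obstacle will be the careful order bookkeeping for $\calR_j$: the three groups of higher-order terms $\rme^{-\delta_j^\rmu L_j}$, $|v_{j+1}|(R_{j+1}+|v_{j+1}|)$ and $R_{j+1}^3$ must each emerge with the correct $\rme^{-2\kappa_j^\rmu L_j}$ prefactor, which requires the sharp estimates in Lemma~\ref{l:proj-est} on $P_j^{\rmc\rmu}\Psi_j^\rms$ and in Lemma~\ref{l:g-est} on the stable component $g_j^\rms$, rather than crude dichotomy bounds. A secondary difficulty is retaining uniformity in the discrete sequence $L_j\in K_j(L^*)$ when $j\in\virI_\rmP$, which is precisely why the exponentially weighted space $X_{\eta_j,L_j}$ with $\eta_j>0$ was arranged in Theorem~\ref{t:1}.
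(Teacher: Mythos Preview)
Your proposal is correct and follows essentially the same Lyapunov--Schmidt strategy as the paper, which packages the two reduction stages and the cosine--exponential expansion into three intermediate results (Lemmas~\ref{l:match1}, \ref{l:bifeq1}, \ref{l:exp1}) before assembling them. One small slip to fix in the detailed write-up: in your displayed formula for $F_j$ the boundary contribution from $\hat W_{j-1}$ enters with the same sign as $B_j^\rmu$, so it should read $+B_j^\rmu(L_j)+B_{j-1}^\rms(L_{j-1})$ rather than with a minus.
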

In \S\ref{s:bifana} the use of this somewhat abstract result for concrete cases is illustrated by a number of examples. See \S\ref{s:maindescr} for a discussion of Theorem~\ref{t:2}.

\medskip
The remainder of this section is devoted to the proof of Theorem~\ref{t:2}, which proceeds in the two Lyapunov-Schmidt reduction steps 1. solve \eqref{e:1} by the coordinate parameters $\om_j$, 2. solve the remaining equations except the flow direction by system parameters $\mu$. 

\subsection{Solvability by coordinate parameters}~

In $E_j$, the leading order dependence on $\mu$ will stem from
\[
\underline{\calM}_j \mu := \int_\R \tilde\Phi_j(y)\partial_\mu f(q_j(y);0)\mu \rmd y,
\]
which is well-defined due to \eqref{e:trichest} for
$
\tilde\Phi_j =
\begin{cases}
P_j \Phi_j^\rmu(0,y) &,\,y\geq 0,\\
P_j \hat\Phi_{j-1}^\rms(0,y) &,\, y\leq 0.
\end{cases}
$

\medskip
\begin{lemma}\label{l:match1}
There are $C, L^*, \eps_1, \eps_2, \eps_3 >0$ depending only on $\calC_\rmred$ and $\delta_j$ such that 
there exist unique $C^k$ smooth $(\tilde\om^\rms,\hat\om^\rmu,\bar\calR)(\bar v, \bar L, \mu)$, $(\tilde\om^\rms,\hat\om^\rmu,\bar\calR): B(\calV, \eps_1) \times \bar \calL(L^*) \times B(\R^d, \eps_2) \to \tilde\Omega \times \bar E$, 
that satisfy,  for $j\in \virI^\rmo$, 
\begin{enumerate}
\item 
$P_j\left(W_j(0;\tilde\om_j^\rms + v_j,\hat\om_j^\rmu,\mu,L_j) - 
\hat W_{j-1}(0;\tilde\om_{j-1}^\rms + v_{j-1},\hat\om_{j-1}^\rmu,\mu,L_{j-1})\right) = 0$,
\item $|\tilde\om_j^\rms| + 
|\hat\om_{j-1}^\rmu - v_j| \leq C \left( |\mu| + |v_j|^2 + R_j^2 + \hat R_{j-1}^2\right)$, 
\item 
$\tilde\om_j^\rms-\hat\om_{j-1}^\rmu = -v_j + \underline\calM_j\mu - B_j^\rmu(L_j) - B_{j-1}^\rms(L_{j-1}) +\calR_{j,1}$, where
\begin{eqnarray*}
\calR_{j,1} &=& \calO\left(R_j^3 + \hat R_{j-1}^3 + (R_j+\hat R_{j-1} + |v_j| + |\mu|)|\mu|
 + |v_j|(\hat R_{j-1}+R_j + |v_j|)\right).
\end{eqnarray*}
\item For any solution of \eqref{e:1} multiplied by $P_j$ that has $|\mu|\leq \eps_2$ and $|\om| \leq \eps_3$ there exists unique $(\bar v, \bar L)$ such that $\om = (\tilde \om^\rms(\bar v, \bar L,\mu) + \bar v, \hat\om^\rmu(\bar v, \bar L,\mu))$.
\end{enumerate}
\end{lemma}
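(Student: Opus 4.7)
\medskip
\emph{Proof plan.} My plan is to view the $P_j$-projection of \eqref{e:1} as an implicit equation for $(\tilde\om_j^\rms,\hat\om_{j-1}^\rmu)_{j\in\virI^\rmo}$ with data $(\bar v,\bar L,\mu)$. To set it up I substitute the fixed point identity $V_j=\calG_j(V_j;\om_j,\mu,L_j)$ from Theorem~\ref{t:1}, evaluated at $x=0$ and $\hat x=0$, into \eqref{e:1} and decompose $\om_j^\rms=\tilde\om_j^\rms+v_j$ via \eqref{e:tildeP}. The resulting equation in $E_j=\tilde H_j^\rms\oplus\hat H_{j-1}^\rmu$ takes the schematic form
\[
F_j:=P_j\bigl(W_j(0)-\hat W_{j-1}(0)\bigr)=(\tilde\om_j^\rms+v_j-\hat\om_{j-1}^\rmu)+\calN_j=0,
\]
where $\calN_j$ collects all integral, $\couplin_j$-, $\coupnl_j$- and $b_j$-contributions. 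By Theorem~\ref{t:1}(3), Lemma~\ref{l:g-est} and \eqref{e:couplin-est}--\eqref{e:coupnl-est2}, $\calN_j$ vanishes to positive joint order in $(\tilde\om_j^\rms,\hat\om_{j-1}^\rmu,\mu,v_j,R_j,\hat R_{j-1})$, so the derivative of $F_j$ in $(\tilde\om_j^\rms,\hat\om_{j-1}^\rmu)$ at the origin reduces to the identity on the direct sum and is invertible. Viewing the family $(F_j)_{j\in\virI^\rmo}$ as a single map on $\tilde\Omega$, a standard implicit function argument with uniform constants (justified by the reducible indexing inherited from $\calC_\rmred$ and finiteness of $\calC^*_1$) produces smooth $(\tilde\om^\rms,\hat\om^\rmu)(\bar v,\bar L,\mu)$, which gives item~1. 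Item~4 is then just the uniqueness clause of the implicit function theorem: given any solution of the $P_j$-projected \eqref{e:1}, extract $v_j=\tilde P_j\om_j^\rms$ and the remaining components must coincide with the implicit function values.

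\medskip
Next I extract the expansion of item~3. Two families of contributions must be identified. First, the linear-in-$\mu$ parts of $g_j(w,y;\mu)$ and $\hat g_{j-1}(\hat w,y;\mu)$, which by $f(p_j;\mu)\equiv f(p_j;0)$ equal $\partial_\mu f(q_j(y);0)\mu+\calO(|w||\mu|)$, enter every integral in $\calG_{j,1}$, $\calG_{j-1,2}$ and in $\coupnl_j,\coupnl_{j-1}$; after application of $P_j$ and assembly along the in- and outflow halves these combine into $\underline\calM_j\mu$, the truncation tails being absorbable into the remainder. Second, the inhomogeneous boundary terms $\Phi_j^\rmu(0,L_j)P_j^\rmu(L_j)b_j(L_j)=B_j^\rmu(L_j)$ in $W_j(0)$ and $-\hat\Phi_{j-1}^{\rms\rmc}(0,-L_{j-1})P_{j-1}^{\rms\rmc}(L_{j-1})b_{j-1}(L_{j-1})$ in $-\hat W_{j-1}(0)$ contribute directly; the $P_{j-1}^\rmc$-piece of the latter vanishes by the phase-coherence identity $\hat\Psi_{j-1}^\rmc(-L_{j-1})\calG_{j-1,2}(-L_{j-1})=0$ built into $\calG_{j-1}$, leaving $-B_{j-1}^\rms(L_{j-1})$.

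\medskip
The main obstacle is the bookkeeping for $\calR_{j,1}$. Inserting the a priori bounds $\|W_j\|^+_{\eta_j,L_j}=\calO(|\mu|+|\tilde\om_j^\rms|+|v_j|+R_j)$ and the $\hat W_{j-1}$-analogue from Theorem~\ref{t:1}(3) into the quadratic estimates \eqref{e:g-est-s}--\eqref{e:g-est-u} of Lemma~\ref{l:g-est}, and combining with the decay of $P_j^\rmu$ and $P_{j-1}^{\rms\rmc}$ from Lemma~\ref{l:proj-est} together with~\eqref{e:est17}--\eqref{e:est18}, the pure nonlinear $g_j$- and $\hat g_{j-1}$-contributions reduce to $\calO(R_j^3+\hat R_{j-1}^3)$; the mixed $|\mu||w|$-terms give $\calO((R_j+\hat R_{j-1}+|v_j|+|\mu|)|\mu|)$; and the $v_j$-carrying cross-terms entering through $\om_j^\rms=\tilde\om_j^\rms+v_j$ yield $\calO(|v_j|(R_j+\hat R_{j-1}+|v_j|))$. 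A short bootstrap, taking the $E_j$-norm of item~3 once it has been derived, removes the residual $\tilde\om_j^\rms$-dependence and produces $|\tilde\om_j^\rms|+|\hat\om_{j-1}^\rmu-v_j|=\calO(|\mu|+|v_j|^2+R_j^2+\hat R_{j-1}^2)$ of item~2. Assembling the three groups delivers the claimed remainder $\calR_{j,1}$ and thereby completes the proof.
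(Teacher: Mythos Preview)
Your approach matches the paper's: rewrite the $P_j$-projection of \eqref{e:1} via the fixed-point identity for $\calG_j$, isolate the invertible linear part $(\tilde\om_j^\rms,\hat\om_{j-1}^\rmu)\mapsto\tilde\om_j^\rms-\hat\om_{j-1}^\rmu$ on $\tilde H_j^\rms\oplus\hat H_{j-1}^\rmu\to E_j$, apply the implicit function theorem with constants uniform in $j$ (finiteness of $\calC_\rmred$), and bootstrap item~2 into the remainder of item~3. Two small corrections: the center component of $-\hat\Phi_{j-1}^{\rms\rmc}(0,-L_{j-1})P_{j-1}^{\rms\rmc}(L_{j-1})b_{j-1}$ at $\hat x=0$ is \emph{not} killed by the phase-coherence identity you cite (that concerns $\hat x=-L_{j-1}$, not $\hat x=0$) but rather by $P_j$ itself, since the flow direction lies in $\ker P_j$; and $\underline\calM_j\mu$ is assembled only from the explicit $\Phi_j^\rmu$- and $\hat\Phi_{j-1}^\rms$-integrals appearing in $\calG_{j,1}(0)$ and $\calG_{j-1,2}(0)$, while the $\coupnl_j$-, $\coupnl_{j-1}$-terms are absorbed into $\calR_{j,1}$ via \eqref{e:coupnl-est}--\eqref{e:coupnl-est2}.
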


\noindent Note that $\underline{\calM}_j$, $B_j^\rmu$, $B_{j-1}^\rms$ have reducible indexing.

\medskip
\begin{proof}
For $j\in J^\rmo$ we need to solve
\[
P_j\left(\calG_{j,1}(W_j,\hat W_j;\om_j,\mu,L_j)(0) - \calG_{j-1,2}(W_{j-1},\hat W_{j-1};\om_{j-1},\mu,L_{j-1})(0)\right)=0.
\]
Let $v_j\in \Rg(\tilde P_j)$ and $\tilde\om_j^\rms \in \tilde E_j^\rms$ for the decomposition of $\om_j^\rms = \tilde\om_j^\rms + v_j$. Reordering terms, using $\Rg(P_j) = E_j$, and that the center direction lies in the kernel of $P_j$ this equation becomes
\begin{equation}\label{e:match1}
	\tilde\om_j^\rms - \hat\om_{j-1}^\rmu 
    = -v_j + \calR_{j,2} - B_j^\rmu(L_j) - B_{j-1}^\rms(L_{j-1}),
\end{equation}
where%
\begin{eqnarray}
     \calR_{j,2} &=& 
    -P_j\int_{L_j}^0\Phi_j^{\rmu}(0,y)g_j(W_j(y),y;\mu)\dy\label{e:partA}\\
    &&
    +P_j\int_{-L_{j-1}}^0\hat\Phi_{j-1}^{\rms}(0,y)\hat 
    g_{j-1}(\hat W_{j-1}(y),y;\mu)\dy\label{e:partB}\\
    &&-P_j\Phi_j^\rmu(0,L_j)P_j^\rmu(L_j)\couplin_j(L_j)\om_j \label{e:partC}\\
    &&-P_j\Phi_j^\rmu(0,L_j)P_j^\rmu(L_j)\coupnl_j(W_j,\hat W_j;\mu,L_j)\label{e:partC2}\\
    &&
    -P_j\hat\Phi_{j-1}^{\rms}(0,-L_{j-1})P_{j-1}^{\rms}
    (L_{j-1}) \couplin_{j-1}(L_{j-1})\om_{j-1}\label{e:partD}\\
    &&    -P_j\hat\Phi_{j-1}^{\rms}(0,-L_{j-1})P_{j-1}^{\rms}
    (L_{j-1})\coupnl_{j-1}(W_{j-1},\hat W_{j-1};\mu,L_{j-1}) 
    \label{e:partE}.
\end{eqnarray}

By construction, the left hand side of \eqref{e:match1} is invertible as a map from $\tilde E_j^\rms(0)\times \hat E_{j-1}^\rmu(0)$ to $E_j$. We will next estimate the terms in $\calR_{j,2}$ and show that the right hand side is a perturbation.

Note that for $r=j-1,j$ the terms $W_r$ and $\hat W_r$ depend on $\omega_r$ as in Theorem~\ref{t:1}. In particular, the right hand side of \eqref{e:match1} depends on $\tilde\om_{j-1}^\rms$ and $v_{j-1}$ (through $\om_{j-1}$ which involves $\om_{j-1}^\rms$) as well as $\tilde\om_{j+1}^\rms$ and $v_{j+1}$ (through $\om_j$ which involves $\hat\om_j^\rmu$) so that each equation is coupled to its left and right neighbour in the itinerary.
 
\medskip
\noindent \underline{Step 1:} estimating  \eqref{e:partA} and \eqref{e:partB}.
 
We expand \eqref{e:partA} and \eqref{e:partB}, and determine the difference to $\underline\calM_j \mu$. As a shorthand we use $f_j = f(q_j;0)$. Upon expanding $g_j$ in $w$ and $\mu$ the terms $f_j$ and $\partial_u f_j W_j$ cancel; similarly for $\hat g_{j-1}$. Thus, \eqref{e:partA} and \eqref{e:partB}, respectively, equal
\begin{eqnarray}
&&\quad P_j\int_{L_j}^0 \Phi_j^\rmu(0,y)
(\partial_{uu} f_j W_j^2 + \partial_\mu f_j \mu 
) \rmd y + \calR_{j,3}, \label{e:step1a}\\
&& -P_j\int_{-L_{j-1}}^0 \hat\Phi_{j-1}^\rms(0,y)
(\partial_{uu} f_j \hat W_{j-1}^2 + \partial_\mu f_j \mu
) \rmd y
+ \calR_{j,4}, \label{e:step1b}
\end{eqnarray}
where
\begin{eqnarray*}
\calR_{j,3} &=& \calO(|\mu|^2 + |\mu|\|W_j\|^+_{\eta_j,L_j} + (\|W_j\|_{\eta_j,L_j}^+)^3),\\
\calR_{j,4} &=& \calO(|\mu|^2 + |\mu|\|\hat W_{j-1}\|^-_{\eta_{j-1},L_{j-1}} + (\|\hat W_{j-1}\|^-_{\eta_{j-1},L_{j-1}})^3).
\end{eqnarray*}
The estimates in Theorem~\ref{t:1} imply
\begin{eqnarray*}
	\calR_{j,3} &=& \calO(|\mu|^2 + |\om^\rms_j|^3 + R_j^3	+ (|\om^\rms_j| + R_j) |\mu|),\\
	\calR_{j,4} &=& \calO(|\mu|^2 + |\hat\om^\rmu_j|^3 + \hat R_{j-1}^3	+ (|\hat\om^\rms_j| + \hat R_{j-1}) |\mu|).
\end{eqnarray*}
In the following, the estimates of Theorem~\ref{t:1} will be substituted directly without mentioning.

\medskip
We next estimate the term in \eqref {e:step1a} involving $\partial_{uu}f_jW_j^2$. Since $W_j$ is a fixed point of $\calG_{j,1}$ we have
\begin{eqnarray}
W_j(y) &=&
\Phi_j^\rms(y,0)\om^\rms_j + \Phi_j^\rmu(y,L_j)P_j^\rmu(L_j)[b_j(L_j) + \calR_{j,5}] + \calR_{j,6}.\label{e:wj}
\end{eqnarray}
Looking at the definition of $\calG_{j,1}$, the remainder term $\calR_{j,6}$ consists of the integrals involving $g_j$, while $\calR_{j,5}$ contains the terms involving $\couplin_j$ as well as $\coupnl_j$. 

When estimating the order of $\calR_{j,6}$, \eqref{e:trichest} and the weight $\eta_j$ allow to ignore the integrals and $\Phi^\rmu_j(x,L_j)$ in the sense that the integrals are of order $\|g_j\|^+_{\eta_j,L_j}$. This is estimated in \eqref{e:g-est} to be of order $|\mu|+|\om_j^\rms|^2 + R_j^2$. Concerning $\calR_{j,5}$, \eqref{e:couplin-est} and \eqref{e:coupnl-est} show that $\calR_{j,5} = \calO(R_j)$. 

In order to get good estimates of the other terms in \eqref{e:wj} we decompose 
\begin{equation}\label{e:wj2}
\begin{array}{rl}
\partial_{uu} f_j (W_j-\Phi_j^\rms(y,0)\om_j^\rms + \Phi_j^\rms(y,0)\om_j^\rms)^2 = \partial_{uu} f_j (W_j-\Phi_j^\rms(y,0)\om_j^\rms)^2\\
  + 2\partial_{uu} f_j [W_j,\Phi_j^\rms(y,0)\om_j^\rms]
  - \partial_{uu} f_j (\Phi_j^\rms(y,0)\om_j^\rms)^2,
\end{array}
\end{equation}
and substitute this into \eqref{e:step1a}. Consider $X=\int_{L_j}^0 \Phi_j^\rmu(0,y)\partial_{uu} f_j (W_j-\Phi_j^\rms(y,0)\om_j^\rms)^2 \rmd y$, and substitute \eqref{e:wj} for $W_j$. Using \eqref{e:trichest}, \eqref{e:b-est-u} as well as $\calR_{j,5}= \calO(R_j)$ we find
\begin{eqnarray}
|X| &\leq& C
\int_{L_j}^0\rme^{-\kappa_j^\rmu y}|\partial_{uu}f_j|
\left(\rme^{-\kappa_j^\rmu(L_j-y)}|P_j^\rmu(L_j)| R_j + |\calR_{j,6}|\right)^2\rmd y \nonumber \\
&\leq& C(\int_{L_j}^0\rme^{-\kappa_j^\rmu(y+2L_j-2y)}\rmd yR_j^2
  + |\calR_{j,6}|^2) \nonumber \\
&\leq& C (R_j^3 + |\mu|^2 + |\om_j^\rms|^4).\label{e:x}
\end{eqnarray}
The remaining terms from \eqref{e:wj2} are directly estimated to be of the order $|\om_j^\rms|^2$ and $\|W_j\|_{\eta,L_j}^+ |\om_j^\rms|$, i.e., of order  $|\om_j^\rms|(|\mu| + |\om_j^\rms| + R_j)$. 

\medskip
In summary, including $\calR_{j,3}$, the term \eqref{e:partA} can be written as
\[
-P_j\int_{L_j}^0 \Phi_j^\rmu(0,y)\partial_\mu f_j \mu \rmd y + \calR_{j,7},
\]
where
\[
\calR_{j,7} = \calO\left(|\mu|(|\mu| + |\om_j^\rms| + R_j) + |\om_j^\rms|^2 + R_j|\om_j^\rms|  + R_j^3\right).
\]
The completely analogous computation for $\hat W_{j-1}$ and \eqref{e:partB} shows that \eqref{e:partB} equals
\[
-P_j\int_{L_{j-1}}^0 \hat\Phi_{j-1}^\rms(0,y)\partial_\mu f_j \mu \rmd y 
+\calR_{j,8},
\]
where
\[
\calR_{j,8} = \calO\left(|\mu|(|\mu| + |\hat\om_{j-1}^\rmu| + 
\hat R_{j-1}) + |\hat\om_{j-1}^\rmu|^2 + \hat R_{j-1}|\hat\om_{j-1}^\rmu| + \hat R_{j-1}^3\right).
\]
Letting the bounds in these integrals tend to infinity generates an error of order $(R_j+ \hat R_{j-1})|\mu|$. Hence, the sum of \eqref{e:partA} and \eqref{e:partB} is of the form
\begin{equation}\label{e:form1}
\underline\calM_j \mu + \calO(\calR_{j,7} + \calR_{j,8}).
\end{equation}

\medskip
\noindent \underline{Step 2:} Estimate \eqref{e:partC}-\eqref{e:partE}.

Using \eqref{e:coupnl-est} and \eqref{e:trichest} shows that \eqref{e:partC2} is of order
\begin{equation}\label{e:est13}
R_j^2\left(|\hat\om_j^\rmu|^2 + 
\hat R_j|\om_j^\rms|^2 + \hat R_jR_j^2 + \hat R_j^2\right) + R_j|\mu|,
\end{equation}
and similarly \eqref{e:coupnl-est2} implies \eqref{e:partE} is of order
\begin{equation}\label{e:est14}
\hat R_{j-1}^2\left(|\om_{j-1}^\rms|^2 + R_{j-1}^2 
+ R_{j-1}\hat R_{j-1}^2 + R_{j-1}|\hat\om_{j-1}^\rmu|^2\right) + \hat R_{j-1}|\mu|.
\end{equation}
By \eqref{e:couplin-est} it follows that \eqref{e:partC} is of order
\begin{equation}\label{e:est15}
R_j^2\left(|\hat\om_j^\rmu| + \hat R_j |\om_j^\rms|\right),
\end{equation}
and by \eqref{e:coupnl-est2} it follows that \eqref{e:partD}  is of order
\begin{equation}\label{e:est16}
\hat R_{j-1}^2\left(R_{j-1}|\hat\om_{j-1}^\rmu| + |\om_{j-1}^\rms|\right).
\end{equation}
In summary,
$\calR_{j,2} = \underline\calM_j \mu + \calO( \calR_{j,7} + \calR_{j,8} + R_j^2|\hat\om_j^\rmu| + \hat R_{j-1}^2|\om_{j-1}^\rms|)$, and in particular, using \eqref{e:est17} and \eqref{e:est18}, the right hand side of \eqref{e:match1} is of the form
\begin{equation}\label{e:match1est}
-v_j + \calO\left(|v_j|^2 + R_j^2 + \hat R_{j-1}^2 + |\mu| + |\om_j^\rms|^2 + |\hat\om_{j-1}^\rmu|^2)\right)
\end{equation}

\medskip
\noindent \underline{Step 3:} Apply the implicit function theorem. 

On account of \eqref{e:match1est} the right hand side is a perturbation of the invertible left hand side for large $L_j$ and $L_{j-1}$ and small $|v_j|$. Notably, the constants in the above order estimates all depend only on $C$ from Theorem~\ref{t:1} and are uniform in $L_j$, $L_{j-1}$.

Hence, for finite $\virI$ and any closing condition there are $\eps_1, \eps_2$ and $L^*$ such that the implicit function theorem immediately applies to the then finite system \eqref{e:match1} for $j\in \virI^\rmo$. This gives unique $C^k$ smooth solutions $\tilde\om_j^\rms, \hat \om_{j-1}^\rmu$ and, due to \eqref{e:match1est}, there is $C>0$ such that these satisfy
\begin{equation*}
|\tilde\om_j^\rms| + |\hat w_{j-1}^\rmu -v_j| \leq 
C\left( |\mu| + |v_j|^2 + R_j^2 + \hat R_{j-1}^2\right).
\end{equation*}
Since $\bar\calR$ is a function of $\tilde\om, \bar v, \bar L, \mu$ this proves items 1 and 2 of the lemma. Item 3 follows from the above form of $\calR_{j,2}$, and item 4 from this and the implicit function theorem.

For infinite $\virI$ the inverse of the map generating the left hand side of \eqref{e:match1},
\[
\tilde\Omega \to \prod_{j\in \virI^\rmo}(\tilde H_j^\rms + \hat H_{j-1}^\rmu), \,
(\tilde\om_j^\rms, \hat\om_{j-1}^\rmu)\mapsto \tilde\om_j^\rms - \hat\om_{j-1}^\rmu,
\] 
is given componentwise by $(\bar P_j, (\bar P_j - \Id))$ with norm measured by that of $\bar P_j := \Proj(\tilde H_j^\rms, \hat H_{j-1}^\rmu)$. Since $\calC_\rmred$ is finite, this is uniformly bounded in $j$. The estimates of the right hand side of \eqref{e:match1} and that this involves only $j$ and $j-1$ immediately gives continuity in $\bar L, \tilde\om, \bar v$ in the spaces $\calL, \tilde\Omega, \calV$. Smoothness of order $k$ follows again using that \eqref{e:match1} is local in the index $j$; smoothness in $\mu$ is straightforward. Hence, the implicit function theorem applies as in the finite case.~\hfill{}
\end{proof}

\subsection{Completing the solution using system parameters} 

Upon substituting the solutions $\tilde\om_j^\rms$, $\hat\om_{j-1}^\rmu$, $j\in \virI^\rmo$, from Lemma~\ref{l:match1} into the fixed points $W_j,\hat W_j$ of Theorem~\ref{t:1} the remaining parameters are $\bar v$, $\mu$ and $\bar L$. 

As explained in Remark~\ref{r:flow}, the projection of \eqref{e:1} to the flow direction $\dot q_j(0)$ is trivially solved. Therefore, the spaces $E_j^\rmb\subset \ker P_j$ identify the remaining so-called \emph{bifurcation equations} that determine $\mu$ via \eqref{e:1} multiplied by all $a_{j,r}^0$, $j\in \virI^\rmb$, $r\in \{1,\dots,d_j\}$, which yields
\begin{equation}\label{e:2}
\sum_{r=1,\ldots,d_j}\langle (W_j - \hat W_{j-1})(0;\bar v,\bar L,\mu), a_{j,r}^0\rangle a_{j,r}^0 =0.
\end{equation}
Recall that $a_{j,r}^0$ form a basis of $E_j^\rmb$ so that each summand  has to vanish.

We will show that the boundary terms $b_j$ enter the $j$-th bifurcation equation via
\begin{eqnarray*}
\hat B_{j,r}(L) &:=& \langle B_j^\rms(L)\,,\, a_{j+1,r}^0\rangle,\\ 
B_{j,r}(L) &:=& \langle B_j^\rmu(L)\,,\, a_{j,r}^0\rangle,\\
\calB_j(L_{j-1}, L_j) &=& \sum_{r=1,\ldots,d_j}(  \hat B_{j-1,r}(L_{j-1}) + B_{j,r}(L_j)) \,a_{j,r}^0.
\end{eqnarray*}

\medskip
To capture the leading order dependence of the $j$-th bifurcation equation on the neighbouring $(j\pm 1)$-st we define (recall $\bar P_j$ from the proof of Lemma~\ref{l:match1})
\begin{eqnarray*}
G_j &:=& \bar P_{j+1}(v_{j+1} + B_{j+1}^\rmu(L_{j+1}) + B_j^\rms(L_j))\\
\hat G_{j-1} &:=& v_{j-1} + (\Id - \bar P_{j-1})(B_j^\rmu(L_j) + B_{j-1}^\rms(L_{j-1})),\\
\calS^+_{j,r} &:=& - \langle
\Phi_j^\rmu(0,L_j)P_j^\rmu(L_j)\hat\Phi_j^\rmu(-L_j,0)G_j
,a_{j,r}^0\rangle \, \rmd y, \\
\calS^-_{j,r} &:=& \langle
\hat\Phi_{j-1}^\rms(0,-L_{j-1})P_{j-1}^\rms(L_{j-1})\Phi_{j-1}^\rms(L_{j-1},0)\hat G_{j-1},a_{j,r}^0
\rangle \, \rmd y,\\
\calS_j &=& \calS_j(v_{j+1},v_{j-1},L_j, L_{j-1},L_{j+1})  :=
\sum_{r=1,\ldots,d_j} (\calS^-_{j,r} + \calS^+_{j,r})a_{j,r}^0.
\end{eqnarray*}
Note how in these terms the coordinate parameters $v_{j\pm 1}$ are transported by the linear evolution from $q_{j\pm 1}(0)$ to $q_j(0)$, respectively. 

The estimates \eqref{e:trichest}, \eqref{e:est17} and \eqref{e:est18} imply
\begin{eqnarray}
\calB_j(L_{j-1}, L_j) &=& \calO(\hat R^2_{j-1} + R^2_j),\label{e:B-est}\\
\calS^+_{j,r} &=& \calO(R^2_j(|v_{j+1}| + R_{j+1}^2 + \hat R_j^2)),\label{e:S1-est}\\
\calS^-_{j,r} &=& \calO(\hat R^2_{j-1}(|v_{j-1}| + R_j^2 + \hat R_{j-1}^2)).\label{e:S2-est}
\end{eqnarray}

In the following lemma a subtlety for the correction $\delta_j$ in the error terms $R_j$, $\hat R_j$ arises. For simple leading eigenvalues $\delta_j=0$ is possible everywhere, except in the estimates of Theorem~\ref{t:1} for $j\in\virI_\rmP$, which requires an exponential weight $\eta_j>0$. So far this was irrelevant, but now it becomes important, and therefore we let $\underline{R}_j=R_j$, $\hat{\underline{R}}_j=\hat R_j$ denote error terms where $\delta_j$ can be set to zero for simple leading eigenvalues.
\begin{lemma}\label{l:bifeq1}
\begin{enumerate}
\item For $j\in \virI^\rmb$ and $\bar L$, $\bar v$ as in Lemma~\ref{l:match1}, 
  equation \eqref{e:2} is of the form
  \begin{eqnarray*}
    \calM_j \mu &=& \calT_j (v_j) - \calB_j(L_{j-1}, L_j)
    + \calS_j(v_{j+1},v_{j-1},L_j, L_{j-1},L_{j+1}) 
    + \calR_{j,9} + \calR_{j,10} 
\end{eqnarray*}
where the remainder terms are $C^k$ smooth and
\begin{eqnarray*}
\calR_{j,9} &=& \calO\left( |\mu|(R_j + \hat R_{j-1} + |v_j| + |\mu|) \right),\\ 
\calR_{j,10} &=& \calO\left(
|v_j|\left(|v_j|^2 + R_j^2\hat R_j+\hat R_{j-1}^2 R_{j-1}\right)
\right.\\
&&+\left. 
|v_{j+1}|R_j^2(\hat R_j + R_{j+1} + |v_{j+1}|) + |v_{j-1}|\hat R_{j-1}^2(\hat R_{j-2} + R_{j-1} + |v_{j-1}|)
\right.\\
&& \left.
+\underline{R}_j^2(R_j + \hat R_j^2 + R_{j+1}^3) +\hat{\underline{R}}_{j-1}^2(\hat R_{j-1} + R_{j-1}^2 + \hat R_{j-2}^3) 
\right).
\end{eqnarray*} 
\item Under Hypothesis~\ref{h:mel}, for sufficiently small $|\mu|$, $|\bar v|$ and large $\bar L$, there exist unique $C^k$ smooth $\calR_{j,11}$ of the same order as $\calR_{j,10}$ such that \eqref{e:2} is solved if and only if for $j\in \virI^\rmb$ the itinerary parameters $\mu_j^*$ satisfy
\[
\mu_j^* = \calT_j(v_j) - \calB_j(L_{j-1}, L_j) + \calS_j(v_{j+1},v_{j-1},L_j, L_{j-1},L_{j+1}) + \calR_{j,11}.
\]
\end{enumerate}
\end{lemma}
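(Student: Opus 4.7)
The plan is a Melnikov-type computation to convert \eqref{e:2} into a scalar identity, followed by an implicit function theorem via Hypothesis~\ref{h:mel}. Let $a_{j,r}(y)$ be the adjoint variational solution with $a_{j,r}(0)=a_{j,r}^0$, taken along $q_j$ for $y\geq 0$ and along $\hat q_{j-1}$ for $y\leq 0$; by \eqref{e:asyphase} and the flow-box normalization these pieces glue smoothly at $y=0$. Differentiating $\langle W_j(y),a_{j,r}(y)\rangle$ along its ODE gives $\frac{\rmd}{\rmd y}\langle W_j,a_{j,r}\rangle=\langle g_j(W_j,y;\mu),a_{j,r}\rangle$, and similarly for $\hat W_{j-1}$. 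Integrating on $[0,L_j]$ and $[-L_{j-1},0]$ and using that $a_{j,r}^0$ annihilates $E_j^\rms(0)+\hat E_{j-1}^\rmu(0)$, which kills the explicit $\om_j^\rms$ and $\hat\om_{j-1}^\rmu$ parts, I will recast \eqref{e:2} as
\begin{equation*}
\int_{-L_{j-1}}^{L_j}\langle g^*(y),a_{j,r}(y)\rangle\,\rmd y = \langle W_j(L_j),a_{j,r}(L_j)\rangle - \langle \hat W_{j-1}(-L_{j-1}),a_{j,r}(-L_{j-1})\rangle,
\end{equation*}
with $g^*=g_j$ on $[0,L_j]$ and $g^*=\hat g_{j-1}$ on $[-L_{j-1},0]$.

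Next I Taylor-expand $g^*=\partial_\mu f\,\mu+\tfrac12\partial_{uu}f(W,W)+\calO(|\mu|^2+|\mu||W|+|W|^3)$ (as in Lemma~\ref{l:g-est}) and identify the leading pieces. The $\mu$-linear integral, extended to $\R$, produces $\calM_j\mu$ after summing against $a_{j,r}^0$; the truncation tail has order $(R_j+\hat R_{j-1})|\mu|$ and feeds into $\calR_{j,9}$. For the quadratic term, Theorem~\ref{t:1}(3,4) together with Lemma~\ref{l:match1}(2) gives $W_j(y)=\Phi_j^\rms(y,0)v_j+\calO(|v_j|^2+R_j^2+\hat R_{j-1}^2+|\mu|)$ on the positive side, and analogously $\hat W_{j-1}(y)=\hat\Phi_{j-1}^\rmu(y,0)v_j+\calO(\cdot)$ on the negative side (since $v_j\in H_j^\rms\cap\hat H_{j-1}^\rmu$ is transported identically from both sides); this reproduces the $\calT_j(v_j)$ term, with the leftover cubic-in-$v$ and mixed errors absorbed into $\calR_{j,10}$. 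On the right, inserting the fixed-point formula for $W_j(L_j)$ and using the adjoint identity $\langle v,a_{j,r}(L_j)\rangle=\langle\Phi_j(0,L_j)v,a_{j,r}^0\rangle$, the $P_j^\rmu(L_j)b_j(L_j)$ piece becomes $B_{j,r}(L_j)$; the $\couplin_j$ piece transports $\hat\om_j^\rmu\approx v_{j+1}$ back to $q_j(0)$ via $\hat\Phi_j^\rmu(-L_j,0)$ and forward via $\Phi_j^\rmu(0,L_j)P_j^\rmu(L_j)$, producing $\calS_{j,r}^+$; the remaining stable-tail, $g_j$-integral, and $\coupnl_j$ pieces are bounded via \eqref{e:couplin-est}--\eqref{e:coupnl-est2} and Lemma~\ref{l:proj-est} and feed into $\calR_{j,9}$ and $\calR_{j,10}$. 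The analogous treatment of $\hat W_{j-1}(-L_{j-1})$ gives $\hat B_{j-1,r}(L_{j-1})$ and $\calS_{j,r}^-$; summing over $r$ yields $-\calB_j+\calS_j$. The main technical obstacle will be bookkeeping: verifying that each remainder piece lands in the precise shape of $\calR_{j,9}$ or $\calR_{j,10}$, in particular tracking the cubic $|v|^3$ and the mixed $R_j^2\hat R_j$, $\hat R_{j-1}^2 R_{j-1}$, $R_{j+1}^3$, $\hat R_{j-2}^3$ contributions that come from the cross-couplings in $\coupnl$ and from the neighbour-transport built into $\calS_j$.

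For part~2, Hypothesis~\ref{h:mel} makes $\calM_\rmred$ invertible, and the parameter change $\check\mu=\calM_\rmred\mu$ described before the theorem turns $\calM_j\mu$ into the itinerary component $\mu_j^*$, so part~1 reads $\mu_j^*=\calT_j(v_j)-\calB_j+\calS_j+\calR_{j,9}+\calR_{j,10}$. Because $\calR_{j,9}=\calO(|\mu|(R_j+\hat R_{j-1}+|v_j|+|\mu|))$ vanishes to first order in $\mu$ with a prefactor that can be made uniformly small once $L^*$ is large enough, the implicit function theorem applies (directly for finite $\virI$, and componentwise in the sup-norm on $\bar E^\rmb$ for infinite $\virI$, the uniform Lipschitz constant being provided by the local-in-index structure of the equations) and yields a unique $C^k$ smooth $\mu_j^*(\bar v,\bar L)$. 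Substituting the leading-order expression $\mu_j^*\sim\calT_j-\calB_j+\calS_j$ into the $|\mu|$ factors of $\calR_{j,9}$ only contributes terms of order no larger than $\calR_{j,10}$, which are absorbed into the new remainder $\calR_{j,11}$ of the same order as $\calR_{j,10}$, completing the proof.
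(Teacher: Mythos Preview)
Your approach is correct and amounts to a minor reorganization of the paper's proof. The paper substitutes the fixed-point representation $W_j(0)=\calG_{j,1}(\cdot)(0)$, $\hat W_{j-1}(0)=\calG_{j-1,2}(\cdot)(0)$ directly and pairs with $a_{j,r}^0$, which immediately produces the list of terms \eqref{e:part1}--\eqref{e:part5}; you instead integrate $\tfrac{\rmd}{\rmd y}\langle W_j,a_{j,r}\rangle=\langle g_j,a_{j,r}\rangle$ and evaluate the fixed-point formula at the far endpoints $x=L_j$, $\hat x=-L_{j-1}$. Because $a_{j,r}^0\perp E_j^\rms(0)+E_j^\rmc(0)$, the adjoint identity $\langle\Phi_j(0,y)\cdot,a_{j,r}^0\rangle=\langle\cdot,a_{j,r}(y)\rangle$ turns the paper's $\Phi_j^{\rmc\rmu}$-integrals into your $\int\langle g_j,a_{j,r}\rangle$, and your boundary contribution $\langle W_j(L_j),a_{j,r}(L_j)\rangle$ collapses (after killing the stable component) exactly to the paper's $b_j$, $\couplin_j$, $\coupnl_j$ pieces \eqref{e:part1}--\eqref{e:part3a}; likewise on the $\hat W_{j-1}$ side. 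The subsequent Taylor expansion of $g^*$, the decomposition $W_j\approx\Phi_j^\rms(\cdot,0)v_j$ via Lemma~\ref{l:match1}(2), the extraction of $\calS_{j,r}^\pm$ from $\couplin$ after inserting $\hat\om_j^\rmu\approx G_j$, and the remainder bookkeeping are then carried out identically in both approaches, as is the implicit-function step for part~2.
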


Here, $\calM_j,\calT_j, \calB_j, \calS_j$ have reducible indexing (they do not differ on repeated parts of the heteroclinic network in the itinerary).

\begin{proof}
Substituting the definition of $\calG_j$, and suppressing some variables for readability, the summands in \eqref{e:2} are
\begin{eqnarray}
- \langle \Phi_j^\rmu(0,L_j)P_j^\rmu(L_j)b_j + \hat \Phi_{j-1}^\rms(0,-L_{j-1})P_{j-1}^\rms(L_{j-1})b_{j-1}, a_{j,r}^0 \rangle \label{e:part1}\\
- \langle \Phi_j^\rmu(0,L_j)P_j^\rmu(L_j)\couplin_j, a_{j,r}^0 \rangle\label{e:part2} \\
- \langle \Phi_j^\rmu(0,L_j)P_j^\rmu(L_j)\coupnl_j, a_{j,r}^0 \rangle\label{e:part2a} \\
- \langle\hat \Phi_{j-1}^\rms(0,-L_{j-1})P_{j-1}^\rms(L_{j-1})\couplin_{j-1}, a_{j,r}^0 \rangle \label{e:part3}\\
- \langle\hat \Phi_{j-1}^\rms(0,-L_{j-1})P_{j-1}^\rms(L_{j-1})\coupnl_{j-1}, a_{j,r}^0 \rangle \label{e:part3a}\\
+ \langle \int_{L_j}^0 \Phi_j^\rms(0,y) g_j(W_j,y;\mu) \rmd y , a_{j,r}^0 \rangle \label{e:part4}\\ - 
\langle\int_{-L_{j-1}}^0 \hat\Phi_{j-1}^\rms(0,y)\hat g_{j-1}(\hat W_{j-1},y;\mu)\rmd y , a_{j,r}^0 \rangle \label{e:part5}
\end{eqnarray}
Note first that term \eqref{e:part1} is precisely
$-\hat B_{j-1,r}(L_{j-1}) - B_{j,r}(L_j)$.

\medskip
\noindent \underline{Step 1:} Estimate \eqref{e:part2}--\eqref{e:part3a}.

From the estimate \eqref{e:est13} of \eqref{e:partC2} and Lemma~\ref{l:match1}(2) we infer that \eqref{e:part2a}  is of the order
\[
R_j^2\left(\hat R_j^2 + |v_{j+1}|^2 + R_{j+1}^4 + \hat R_jR_j^2 +\hat R_j(|v_j|^2 + R_{j}^4 + \hat R_{j-1}^4) \right)  + R_j|\mu|.
\]
Similarly, now using \eqref{e:est14}, \eqref{e:part3a} is of theorder
\[
\hat R_{j-1}^2\left(R_{j-1}^2 + |v_{j-1}|^2 + \hat R_{j-2}^4 + R_{j-1}\hat R_{j-1}^2 + R_{j-1}(|v_j|^2 + R_j^4+ \hat R_{j-1}^4)\right) + \hat R_{j-1}|\mu|.
\]

\medskip
Substituting the expansion of $\hat\om_j^\rmu$ from Lemma~\ref{l:match1}(3) into \eqref{e:part2} and using \eqref{e:couplin-est} as well as Lemma~\ref{l:match1}(2) for $|\tilde\om_j^\rms|$ gives
\[
\calS_{j,r}^+ + \calO\left(\underline{R}_j^2\left(|\mu|+ |\calR_{j+1,1}| + \hat R_j(|v_j|+R_j^2+\hat R_{j-1}^2)\right)\right).
\]
Similarly, using  the expansion of $\tilde\om_{j-1}^\rms$, \eqref{e:part3} contains $\calS^-_{j,r}$, and, by \eqref{e:couplin-est2}, the rest is of the order
\[
\hat{\underline{R}}_{j-1}^2\left(|\mu| + |\calR_{j-1,1}| + R_{j-1}(|v_j|+R_j^2+\hat R_{j-1}^2)\right).
\]
In summary, after some computation, \eqref{e:part2}$+$\eqref{e:part2a}$+$\eqref{e:part3}$+$\eqref{e:part3a} minus $\calS^\pm_{j,r}$ is of the order
\begin{eqnarray}
|\mu|(R_j + \hat R_{j-1}) 
+ |v_j|(R_j^2\hat R_j + \hat R_{j-1}^2 R_{j-1})\nonumber\\
+ |v_{j+1}|R_j^2(\hat R_j + R_{j+1} + |v_{j+1}|) 
+ |v_{j-1}|\hat R_{j-1}^2(\hat R_{j-2} + R_{j-1} + |v_{j-1}|)\label{e:rem}\\
+ \underline{R}_j^2(\hat R_j^2 + R_j^2 + R_{j+1}^3)+ \hat{\underline{R}}_{j-1}^2(R_{j-1}^2 + \hat R_{j-1}^2 + \hat R_{j-2}^3).\nonumber
\end{eqnarray}
Note how remainder terms come from the local piece of the itinerary, but also from one and two steps further along the itinerary, if the itinerary is that long. If the itinerary is shorter and not `per' then all the terms with indices outside the range of the itinerary vanish by definition. 

\medskip
\noindent \underline{Step 2:} Expand and estimate  \eqref{e:part4} and \eqref{e:part5}. 

Similar to the proof of Lemma~\ref{l:match1} the idea is to expand \eqref{e:part4} and \eqref{e:part5}, so that the sum can be written as
\[
-\calM_j \mu + \calT_j (v_j) + \calR_{j,11}.
\]
We write $f_j = f(q_j;0)$ and expand $g_j$ and $\hat g_{j-1}$ so that \eqref{e:part4} + \eqref{e:part5} equals
\begin{eqnarray}
&& \langle \int_{L_j}^0 \Phi_j^\rmu(0,y)
(\partial_{uu} f_j W_j^2 + \partial_\mu f_j \mu 
) \rmd y , a_{j,r}^0 \rangle \label{e:fuu1}\\
&& -\langle \int_{-L_{j-1}}^0 \hat\Phi_{j-1}^\rms(0,y)
(\partial_{uu} f_j \hat W_{j-1}^2 + \partial_\mu f_j \mu
) \rmd y , a_{j,r}^0 \rangle
+\calR_{j,12},\label{e:fuu2}
\end{eqnarray}
where
\begin{eqnarray*}
\calR_{j,12} &=& \calO(|\mu|^2 + |\mu|(\|W_j\|_{\eta_j,L_j} + \|\hat W_{j-1}\|_{\eta_{j-1},L_{j-1}})\\ 
&&+ (\|W_j\|_{\eta_j,L_j})^3 
+ (\|\hat W_{j-1}\|_{\eta_{j-1},L_{j-1}})^3.
\end{eqnarray*}
Theorem~\ref{t:1} and Lemma~\ref{l:match1} imply
\begin{eqnarray*}
\|W_j(\om_j^\rms,\mu,L_j)\|_{\eta_j,L_j} &=& 
\calO(|\mu| + |v_j| + R_j + \hat R_{j-1}^2),\\
\|\hat W_{j-1}(\om_{j-1}^\rmu,\mu,L_{j-1})\|_{\eta_{j-1},L_{j-1}} &=& 
\calO(|\mu| + |v_j| + R_j^2 + \hat R_{j-1}),
\end{eqnarray*}
which yields
\[
	\calR_{j,12} = \calO(|\mu|^2 + |v_j|^3 + R_j^3 + \hat R_{j-1}^3 
	+ (R_j + \hat R_{j-1} + |v_j|) |\mu|).
\]
In \eqref{e:fuu1} we write
\begin{equation}\label{e:wj3}
\begin{array}{rl}
\partial_{uu}f_jW_j^2 &= \partial_{uu}f_j[W_j-\Phi_j^\rms(x,0)\om_j^\rms]^2 + \partial_{uu}f_j(\Phi_j^\rms(x,0)\om_j^\rms)^2\\ 
& + 2\partial_{uu}f_j[W_j-\Phi_j^\rms(x,0)\om_j^\rms,\Phi_j^\rms(x,0)\om_j^\rms],
\end{array}
\end{equation} 
and consider the resulting integrals in  \eqref{e:fuu1} from each of the three summands (I--III) of the right hand side of \eqref{e:wj3}. Since we already have an error of order of $\calR_{j,12}$, we focus on the additional contributions.

\medskip
\noindent (I) The integral over the first summand gives rise to the term $X$ from the proof of Lemma~\ref{l:match1}. Using \eqref{e:x} it does not contribute further to the order of $\calR_{j,12}$. 

\noindent (II) The integral over the second summand $\partial_{uu}f_j(\Phi_j^\rms(x,0)\om_j^\rms)^2$ reads
\begin{eqnarray*}
\int_{L_j}^0 \langle 
\partial_{uu} f_j (\Phi_j^\rms(y,0)v_j)^2, a_{j,r}(y) \rangle \rmd y + \calO(|\tilde\om_j^\rms|(|v_j| + |\tilde\om_j^\rms|)).
\end{eqnarray*}
Substituting the estimate of Lemma~\ref{l:match1} implies that the remainder term of this is of order
\[
(|\mu| + |v_j|^2 + R_j^2 + \hat R_{j-1}^2)|v_j| + |\mu|^2 + R_j^4 + \hat R_{j-1}^4 \leq C | \calR_{j,12}|.
\]

\noindent (III) Using \eqref{e:wj} and the same integral computation as for the estimate of $X$ in Lemma~\ref{l:match1}, the integral over the third summand $\partial_{uu}f_j[W_j-\Phi_j^\rms(x,0)\om_j^\rms,\Phi_j^\rms(x,0)\om_j^\rms]$ is of order $|\calR_{j,6}| |\om_j^\rms|$ with $\calR_{j,6}$ from that proof. Lemma~\ref{l:match1}(2) shows
\begin{eqnarray*}
\calR_{j,6} &=&\calO\left(|\mu|+|v_j|^2 + \hat R_{j-1}^4 + R_j^2\right) = \calO(\calR_{j,12}).
\end{eqnarray*}

The same holds for the corresponding terms in \eqref{e:part5}. Hence, going to infinite integral bounds as in the proof of Lemma~\ref{l:match1}, we  obtain that \eqref{e:part4}+\eqref{e:part5} can be written as
\[
-\calM_j\mu + \calT_j(v_j)
+\calR_{j,12}.
\]
Combining this with the remainder term \eqref{e:rem} of step 1 proves part (1) of the lemma statement. 

\medskip
\noindent \underline{Step 3:} Apply the implicit function theorem.

Recall the change of parameters in the Lyapunov-Schmidt reduction of $\calM\mu = X$ discussed after the definition of $\calM$ in \eqref{e:mel}. The itinerary parameters are such that $\calM_j|_{\mu_j}:\R^{d_j}\to E_j^\rmb$ is the identity (on the chosen basis) and generates the solvability conditions $X_j = X_{j'}$. Using part 1 of the lemma, estimates \eqref{e:B-est}, \eqref{e:S1-est} and \eqref{e:S2-est} and Hypothesis~\ref{h:mel} allow to apply the implicit function theorem immediately for finite $\virI$. 

As in Lemma~\ref{l:match1} all estimates are local in the itinerary, i.e., in the index $j$, so that smoothness in $(\bar v,\bar L)\in \calV \times \calL(L^*)$ follow. Hence, the implicit function theorem also applies for infinite $\virI$, which proves part (2) of the lemma statement.
~\hfill{}
\end{proof}

The basis to study the leading order geometry of bifurcation sets is the following. 

\begin{lemma}\label{l:exp1} There exists $L^*\geq L^1$ such that the following holds.
\begin{enumerate}
\item Assume Hypothesis~\ref{h:simplestable} with $\nu_j = -\kappa^\rms + \rmi \sigma^\rms_j$. There exist $\hat\beta_{j,r}, \hat h_{j,r}\in \R, \beta_j^{\rms} \in \R^{n}$, $h_j^\rms\in E_j$ such that for all $L\in K_j(L^*)$ and $\delta_j^\rms < \min\{\kappa_j,\rho_j^\rms\}$
\begin{eqnarray*}
  B_j^\rms(L) &=& \rme^{-2\kappa_j^\rms L} \Cos(2\sigma^\rms_j L + \beta_{j}^\rms)h_j^\rms + \calO(\rme^{-(2\kappa_j^\rms + \delta_j^\rms)L}),\\
  \hat B_{j,r}(L) &=&   
  \rme^{-2\kappa_j^\rms L} \cos(2\sigma^\rms_j L + \hat\beta_{j,r})\hat h_{j,r} + \calO(\rme^{-(2\kappa_j^\rms + \delta_j^\rms)L}).
  \end{eqnarray*}
If Hypothesis~\ref{h:lead}(2) and~\ref{h:lead}(4) hold, then there is $r^\rms\in \{1,\ldots,d_{j}\}$ such that $\hat h_{j,r^\rms}\neq 0$. If Hypotheses~\ref{h:lead}(2) and~\ref{h:lead}(6) hold, then $h_{j}^\rms\neq 0$.
\item Assume Hypothesis~\ref{h:simpleunstable} with $\nu_j = -\kappa^\rmu \pm\rmi \sigma^\rmu_j$. There exist $\check\beta_{j,r}, \check h_{j,r} \in\R, \beta_j^{\rmu} \in \R^{n}$, $h_j^\rms\in E_j$ such that for all $L\in K_j(L^*)$ and $\delta_j^\rmu < \min\{\kappa_j,\rho_j^\rmu\}$
  \begin{eqnarray*}
  B_j^\rmu(L) &=&   
  \rme^{-2\kappa_j^\rmu L} \Cos(2\sigma^\rmu_j L + \beta_{j}^\rmu)h_j^\rmu + \calO(\rme^{-(2\kappa_j^\rmu + \delta_j^\rmu)L}),\\
    B_{j,r}(L) &=&   
  \rme^{-2\kappa_j^\rmu L} \cos(2\sigma^\rmu_j L + \check\beta_{j,r})\check h_{j,r} +
  \calO(\rme^{-(2\kappa_j^\rmu + \delta_j^\rmu)L}).
  \end{eqnarray*}
  If Hypotheses~\ref{h:lead}(1) and~\ref{h:lead}(3) hold (replacing $j-1$ by $j$), then there is $r^\rmu\in \{1,\ldots,d_{j}\}$ such that $\check h_{j-1,r^\rmu}\neq 0$. If Hypothesis~\ref{h:lead}(1) and (5) hold, then $h_{j}^\rmu\neq 0$.
\end{enumerate}
\end{lemma}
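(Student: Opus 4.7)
The plan is to apply Lemma~\ref{l:proj-est}(4) to the dominant part of $b_j(L)$, then unfold the matrix exponential $\rme^{\pm 2F_jL}v_j^{\rms/\rmu}$ on the leading ($1$D or $2$D) eigenspace via an explicit real normal form. For part~(1), I would write
\[
b_j(L) = (p_j(\alpha_j+L) - q_j(L)) - (p_j(\alpha_j-L) - \hat q_j(-L)),
\]
using $p_j(\alpha_j+L)=p_j(\alpha_j-L)$ for $L\in K_j(L^1)$. Composing with $\hat\Phi_j^\rms(0,-L)P_j^\rms(L)$, Lemma~\ref{l:proj-est}(4) delivers $\rme^{2F_jL}v_j^\rms+\calO(\rme^{-2(\kappa_j^\rms+\delta_j^\rms)L})$ from the first summand. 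For the second, since $P_j^\rms=P_j^\rms\circ P_j^{\rms\rmc}$, Lemma~\ref{l:proj-est}(2) bounds it by $CR_j\hat R_j$, and the contraction estimate~\eqref{e:trichest} for $\hat\Phi_j^\rms(0,-L)$ (with $\delta_j=0$ in $\hat R_j$ under Hypothesis~\ref{h:simplestable}) yields $\calO(\rme^{(\delta_j-2\kappa_j^\rms-\kappa_j^\rmu)L})$ in straight coordinates. Choosing $\delta_j<\kappa_j^\rmu-\delta_j^\rms$ (possible since $\delta_j^\rms<\kappa_j<\kappa_j^\rmu$) absorbs this into the stated remainder.

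Next, I would unfold $\rme^{2F_jL}v_j^\rms$ using the spectral structure on the leading stable eigenspace $V_j^\rms$. Under Hypothesis~\ref{h:simplestable}, $V_j^\rms$ is either $1$-dimensional (real eigenvalue $-\kappa_j^\rms$, $\sigma_j^\rms=0$) or $2$-dimensional (complex pair $-\kappa_j^\rms\pm\rmi\sigma_j^\rms$); in the latter case pick a real basis $e_j^1,e_j^2$ putting $F_j|_{V_j^\rms}$ in standard rotation--scaling form. Writing $v_j^\rms=\alpha_1 e_j^1+\alpha_2 e_j^2$,
\[
\rme^{2F_jL}v_j^\rms=\rme^{-2\kappa_j^\rms L}\bigl[(\alpha_1\cos 2\sigma_j^\rms L-\alpha_2\sin 2\sigma_j^\rms L)e_j^1+(\alpha_1\sin 2\sigma_j^\rms L+\alpha_2\cos 2\sigma_j^\rms L)e_j^2\bigr].
\]
Each component in $\R^n$ is of the form $h\cos(2\sigma_j^\rms L+\beta)$, which collects componentwise into $\rme^{-2\kappa_j^\rms L}\Cos(2\sigma_j^\rms L+\beta_j^\rms)h_j^\rms$. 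Scalar-pairing with $a_{j+1,r}^0$ yields the scalar version with $\hat h_{j,r}=\sqrt{c_r^2+s_r^2}$, where $c_r=\alpha_1\langle e_j^1,a_{j+1,r}^0\rangle+\alpha_2\langle e_j^2,a_{j+1,r}^0\rangle$ and $s_r=-\alpha_2\langle e_j^1,a_{j+1,r}^0\rangle+\alpha_1\langle e_j^2,a_{j+1,r}^0\rangle$.

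For the non-vanishing assertions: $\hat h_{j,r}=0$ iff $c_r=s_r=0$, and this linear system in $(\langle e_j^1,a_{j+1,r}^0\rangle,\langle e_j^2,a_{j+1,r}^0\rangle)$ has determinant $-|v_j^\rms|^2$. Hence if $v_j^\rms\neq 0$ (ensured by Hypothesis~\ref{h:lead}(2) via Lemma~\ref{l:proj-est}(4)) and $a_{j+1,r^\rms}^0\not\perp V_j^\rms$ for some $r^\rms$ (which is exactly what Hypothesis~\ref{h:lead}(4) asserts about the leading stable decay rate of the adjoint), then $\hat h_{j,r^\rms}\neq 0$. The vector statement $h_j^\rms\neq 0$ follows by pairing with a vector from Hypothesis~\ref{h:lead}(6) having non-trivial leading projection into $V_j^\rms$. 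Part~(2) is completely symmetric under $\rms\leftrightarrow\rmu$ and time reversal: apply the above to $\Phi_j^\rmu(0,L)P_j^\rmu(L)$, use Lemma~\ref{l:proj-est}(4) for $p_j(\alpha_j-L)-\hat q_j(-L)$, and invoke Hypotheses~\ref{h:lead}(1,3,5). The main obstacle is controlling the cross term (the part of $b_j$ for which only the weaker $R_j\hat R_j$ bound of Lemma~\ref{l:proj-est}(2) is available); this works only because straight coordinates turn $R_j\hat R_j$ into a true $\rme^{-(\kappa_j^\rms+\kappa_j^\rmu)L}$, and the gap $\kappa_j^\rmu$ provides a strict margin over $\delta_j^\rms$.
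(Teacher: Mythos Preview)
Your proposal is correct and follows essentially the same route as the paper: decompose $b_j(L)$ into the two pieces handled by Lemma~\ref{l:proj-est}(4) and Lemma~\ref{l:proj-est}(2), respectively, then resolve $\rme^{\pm 2F_jL}v_j^{\rms/\rmu}$ on the leading eigenspace via angle addition. The paper's proof is a one-line reference to precisely these three ingredients; you have simply filled in the bookkeeping, including the explicit check that the cross-term $\calO(R_j\hat R_j)$ propagated through $\hat\Phi_j^\rms(0,-L)$ fits under the stated remainder once $\delta_j<\kappa_j^\rmu-\delta_j^\rms$.
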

  
\begin{proof}
The expansions are a direct consequence of substituting Lemma~\ref{l:proj-est}(4) into the definitions of $\hat B_{j,r}(L)$, $B_{j,r}(L)$, $B_j^{\rms/\rmu}(L)$, using Lemma~\ref{l:proj-est}(2), and angle addition formulae.
~\hfill{}
\end{proof} 

\textbf{Proof of Theorem~\ref{t:2}.}
Concerning the expansion of $\mu_j^*$, we substitute the results of Lemma~\ref{l:exp1} into the expansion in Lemma~\ref{l:bifeq1}(2) and solve the resulting equation for $\mu$. The exponents $\delta_j^{\rms/\rmu}$ in the remainder term are not restricted further than in Lemma~\ref{l:exp1} since the relevant terms in $\calR_{j,10}$ of Lemma~\ref{l:bifeq1} are $R_j^3$ and $R_{j-1}^3$.

The terms from $\calS^\pm_j$ are, using placeholder terms $Y$, $\hat Y$, of the form
\begin{eqnarray*}
&\langle \Phi_j^\rmu(0,L_j)P_j^\rmu(L_j)\hat\Phi_j^\rmu(-L_j,0)Y, a_{j,r}^0 \rangle\\
&\langle \Phi_{j-1}^\rms(0,-L_{j-1})P_{j-1}^\rms(L_{j-1})\Phi_{j-1}^\rms(L_{j-1},0)\hat Y, a_{j,r}^0 \rangle.
\end{eqnarray*}
As noted in the proof of Lemma~\ref{l:proj-est}(4) we can replace $\Phi^\rms_{j-1}(x,0)$, $\hat \Phi^\rmu_j(x,0)$, $\hat\Phi^\rms_{j-1}(x,0)$, $\Phi^\rmu_j(x,0)$ by the evolution and projections of the variation about $p_j$ to leading order. These linear evolutions generate terms of the same form as the leading order part of the expansion of the nonlinear terms in Lemma~\ref{l:exp1}. 

The cosine terms stem from resolving $F_j$, $F_{j-1}$ in terms of the leading eigenvalues, which gives sine and cosine terms with the \emph{same} angle arguments appear with $v_{j\pm 1}$-dependent coefficients, respectively. After multiplication with $\calM_j^{-1}$ in each component all terms can be joined into one cosine term for $\nu_j^\rmu$ and one for $\nu_{j-1}^\rms$ with $v_{j\pm 1}$-dependent phase shift, respectively. This gives the diagonal nature of the linear coefficient maps. Expanding phase shift and cosine, only the linear $v_{j\pm 1}$-dependence is kept and the rest subsumed into the remainder terms. 

Finally, the order of $\frac{\rmd}{\rmd v_j}\calR_j$ follows from inspecting \eqref{e:part2}-\eqref{e:part5} as in the proof of Lemma~\ref{l:bifeq1}. 
This completes the proof of Part 1, and Part 2 follows from this in combination with the estimates of Theorem~\ref{t:1} and Lemmata~\ref{l:match1}, \ref{l:bifeq1}.

\medskip
Part 3 of this theorem is a consequence of the fact that fixed points of $\calG_j$ are coordinates of trajectories, see Corollary~\ref{c:1}, and the uniqueness of solutions in Lemma~\ref{l:match1} and \ref{l:bifeq1}.
Injectivity from $(\bar v,\bar L)$ to trajectories for fixed $\mu$, follows the expansion  in Lemma~\ref{l:match1}(3) and Corollary~\ref{c:1}; injectivity in $\mu$ follows from invertibility of $\calM_\rmred$. It remains to argue that this covers all solutions in a neighbourhood of the visited heteroclinic connections.

The assumptions on $x_j$ guarantee that the $j$-variations obtained from $u$ are coherent with $\calC$ and so small that Theorem~\ref{t:1} and Lemmas~\ref{l:match1}, \ref{l:bifeq1} apply. Since fixed points of $\calG_j$ are surjective onto such $j$-variations in a neighbourhood of each heteroclinic connection, each of these corresponds to a unique fixed point of $\calG_j$ near $p_j$. Hence, the coordinate parameters must solve \eqref{e:1} for the given $\mu$. The Lyapunov-Schmidt reduction of this system done in Lemmas~\ref{l:match1}, \ref{l:bifeq1} provides necessary and sufficient conditions on all such solutions. Therefore, the coordinates parameters $\bar v$ and travel times $\bar L$ derived from $u$ must solve part (1) of Theorem~\ref{t:2}. In particular, these solutions indeed cover all trajectories of \eqref{e:ode} that remain in a neighbourhood of the selected itinerary.

\section{Sample bifurcation analyses}\label{s:bifana}

If the itinerary $\calC$ does not have repeated elements of $\calC^*_2$, then $\bar\mu = \mu^*$ so there are no solvability conditions for $(\bar v, \bar L)$ in Theorem~\ref{t:2}. Hence, it already proves existence, uniqueness, and the parameter expansions of certain periodic, homoclinic and heteroclinic orbits as follows. If the heteroclinic network allows for recurrence, e.g., if $p_1=p_m$, these solutions are \emph{simply recurrent}, or $1$-recurrent, in the sense that during their minimal (possibly infinite) period they intersect each section $\Sigma_j$ at $q_j(0)$ for all $q_j\in\calC_2$ precisely once. We call such solutions $1$-periodic, $1$-homoclinic or $1$-heteroclinic orbits.

\begin{corollary}\label{c:basic}
Assume Hypotheses~\ref{h:basic} and~\ref{h:mel}. If $\virI^\rmo = \virI_\rmred$ (in particular $\virI=\{1,\ldots,m\}$ is finite and $\virI_\rmred$ unique) then there exists $\eps>0$ and a neighbourhood $\calU$ of the itinerary such that the following holds.
\begin{enumerate}
\item If $p_1\neq p_m$ then the set of 1-heteroclinic orbits from $p_1$ to $p_m$ in $\calU$ is bijective to the non-empty set of fixed points of Lemma~\ref{l:match1} and parameters as in Lemma~\ref{l:bifeq1} under the `het' closing conditions $L_1=L_m=\infty$.
\item If $p_1=p_m$ then the same holds for the set of 1-homoclinic and 1-periodic orbits under the `hom' ($L_1=L_m=\infty$) or `per' ($L_1=L_m$) closing conditions, respectively.
\item Under the assumptions of Theorem~\ref{t:2}(1) in each case $\bar v$ and $\bar L$ parametrise the corresponding solution set, and $\bar\mu\in \R^d$ satisfy the given expansion.
\end{enumerate}
\end{corollary}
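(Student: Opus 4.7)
The plan is to derive the corollary as a direct specialisation of Theorem~\ref{t:2} under the hypothesis $\virI^\rmo=\virI_\rmred$. The crucial simplification is that, without repetitions in the itinerary, each itinerary parameter $\mu_j^*$ is identified with a distinct block $\bar\mu_j$ of the full parameter $\mu\in\R^d$, so the Lyapunov--Schmidt reduction is closed: the bifurcation equations of Lemma~\ref{l:bifeq1}(2) carry no solvability conditions on $(\bar v,\bar L)$ and define $\mu$ implicitly in terms of them.

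First I would install the appropriate closing conditions. For `het' and `hom' set $L_1=L_m=\infty$ and replace $W_1,\hat W_m$ by the manifold parameterisations from Corollary~\ref{c:fibers}, so that the matching equations \eqref{e:1} at $j=2$ and $j=m$ encode $W_2(0)\in\calW^\rmu(p_1)$ and $\hat W_{m-1}(0)\in\calW^\rms(p_m)$ (respectively $\calW^\rms(p_1)$ in the `hom' case). For `per' take $L_1=L_m$ with cyclic indexing. In all three cases the estimates of Theorem~\ref{t:1} remain valid (with $R_r=\hat R_r=0$ at terminal indices in the semi-infinite cases), so Lemma~\ref{l:match1} yields unique $C^k$ smooth $(\tilde\om^\rms,\hat\om^\rmu)(\bar v,\bar L,\mu)$ solving the projected system, and Lemma~\ref{l:bifeq1}(2) delivers the bifurcation equations $\mu_j^*=\calT_j(v_j)-\calB_j+\calS_j+\calR_{j,11}$ for $j\in\virI^\rmb$.

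Second, since $\virI^\rmo=\virI_\rmred$ the blocks $(\bar\mu_j)_{j\in\virI^\rmb}$ exhaust $\mu\in\R^d$ and no two blocks are identified, so the implicit function theorem produces a unique $C^k$ smooth map $(\bar v,\bar L)\mapsto\mu$ on $B(\calV,\eps_*)\times\calL(L^*)$. Combined with $W_j,\hat W_j$ from Theorem~\ref{t:1}, this yields, for each admissible $(\bar v,\bar L)$, a fixed point of every $\calG_j$ whose components match in the directions of $P_j$ and $E_j^\rmb$. The remaining flow direction is absorbed by the flow-box argument of Remark~\ref{r:flow}, and Corollary~\ref{c:1} converts the fixed points into a single trajectory of \eqref{e:ode} that, in view of the closing conditions, is respectively a 1-heteroclinic, 1-homoclinic or 1-periodic orbit contained in a suitable neighbourhood $\calU$ of $\bigcup_{j\in\virI^\rmo}\{q_j(x):x\in\R\}$. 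Conversely, any such orbit in $\calU$ induces a fixed point of each $\calG_j$ via Corollary~\ref{c:1}, and the uniqueness statements in Lemmas~\ref{l:match1} and~\ref{l:bifeq1} force these to agree (up to time shift) with the constructed ones, giving the bijection asserted in (1) and (2). Non-emptiness follows because $\calL(L^*)$ is non-empty in all three cases.

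Finally, part (3) is immediate from Theorem~\ref{t:2}(1) applied to the chosen itinerary: under the simple-leading-eigenvalue hypotheses at the relevant vertices the expansion \eqref{e:intro}--\eqref{e:intro2} holds for each $\mu_j^*$, and the identification $\mu_j^*=\bar\mu_j$ transfers it verbatim to $\bar\mu$. The main point requiring care is verifying that the semi-infinite replacements $\calG_j\mapsto\calG_j^\infty$ are compatible with the uniform estimates underlying Lemmas~\ref{l:match1} and~\ref{l:bifeq1}; this reduces to observing that all terms $B_r^{\rms/\rmu}$, $\calB_r$ and the relevant parts of $\calS_r$ indexed by a terminal vertex vanish at $L=\infty$, so the contractions and estimates in those lemmas go through unchanged.
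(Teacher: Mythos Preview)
Your proposal is correct and follows essentially the same approach as the paper, which treats Corollary~\ref{c:basic} as an immediate consequence of Theorem~\ref{t:2}: since $\virI^\rmo=\virI_\rmred$ there are no repeated itinerary parameters, so $\bar\mu=\mu^*$ and the bifurcation equations of Lemma~\ref{l:bifeq1}(2) impose no solvability conditions on $(\bar v,\bar L)$. Your write-up simply spells out in more detail the passage through the closing conditions, the flow-box argument, and the bijection via Corollary~\ref{c:1}, all of which the paper leaves implicit in the one-sentence justification preceding the corollary.
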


In the following we illustrate for some basic heteroclinic networks how to determine and use the equations for $\mu_j$ of Theorem~\ref{t:2}. Most of the results are well known, but we hope to show the ease in obtaining them using that result. The last class of examples are for heteroclinic cycles between one equilibrium and one periodic orbit mentioned in the introduction.

\subsection{Heteroclinic orbits}

The simplest heteroclinic network consists of two elements $p_1\neq p_2$ connected by one heteroclinic orbit $q_2$, which enforces the `het' closing condition. We thus investigate the existence and variation of the heteroclinic connection $q_2$ upon parameter changes. Note that $L_1=L_2=\infty$ so that the bifurcation equations only contain terms of order $|\mu_2|^\ell,|v_2|^{\ell+1}$, $\ell\geq 1$.

In this context $E_2 = H_2^\rms + \hat H_1^\rmu \subset  E_2^\rms(0) + \hat E_1^\rmu(0)$, where the inclusion is due to the flow direction, which was removed for equilibria. The number of bifurcation equations is $d_2 = \dim E_2^\rmb=n-\dim(H_2^\rms + \hat H_1^\rmu)-1$, and the number of remaining coordinate parameters $v_2$ for potential tangent directions is given by $\dim(H_2^\rms \cap \hat H_1^\rmu)$.

Typically, $\dim(E_2)$ is maximal, so that no tangencies occur and $\# \virI^\rmt$ is minimal, which implies $d_2 = n +1 - \dim(\calW^\rmu(p_1)) - \dim(\calW^\rms(p_2))$ is the codimension of the heteroclinic connection and $\# \virI^\rmt = \dim(\calW^\rmu(p_1) \cap \calW^\rms(p_2))-1$ is the dimension of the set of heteroclinic trajectories.

\subsubsection{Saddle-saddle connection}\label{s:hetsaddle}

Suppose both $p_1$ and $p_2$ are equilibria or periodic orbits connected in a saddle-saddle situation $\dim(\calW^\rmu(p_1)) + \dim(\calW^\rms(p_2)) = n$ for which the linear codimension is $1$, i.e., $d_2\geq 1$ and typically $d_2=1$. Since the heteroclinic connection cannot be transverse $\dim\Rg\tilde P_j$ counts tangent directions (except the flow) in $\calW^\rmu(p_1) \cap \calW^\rms(p_2))$ and typically is zero. 

In the typical case the bifurcation equation from Lemma~\ref{l:bifeq1} reads $\calM_2\mu_2 = \calO(\mu_2^2)$, $\mu_2\in\R$, and if $\calM_2\neq 0$ a heteroclinic connection exists only at $\mu_2=0$. If there is an auxiliary parameter $\tilde\mu_2$ we obtain a second contribution to the Melnikov integral and the leading order bifurcation equation
\[
\calM_2\mu_2 + \tilde\calM_2\tilde\mu_2 = 0.
\]
Hence, if $\calM_2$ and $\tilde\calM_2$ are non-zero, then heteroclinic orbits exist locally on a curve in the parameter plane.

\subsubsection{Tangent source-sink connection}\label{s:hetsource}

In a source-sink case, where the heteroclinic connection has non-positive linear codimension, generically $d_2 = 0$, i.e., there are no bifurcation equations for parameters. In that case, the coordinate parameters for heteroclinic orbits are given in Lemma~\ref{l:match1}. Note, that for negative linear codimension coordinate parameters $v_j$ appear.

If the heteroclinic connection is tangent, i.e., $d_2\geq 1$ we have $\virI^\rmt=\{2\}$ and for single tangent direction $d_2=1$, $v_2,\mu_2\in\R$. The quadratic function $\calT_2(v_2)$ is then scalar and can be written as $a v_2^2$ for $a\in \R$ so that the bifurcation equation reads
\[
\calM_2\mu_2 = a v_2^2 + \calO(|v_2|^3 + \mu_2^2).
\]
Hence, heteroclinic connections typically ($a\neq 0$) occur on a parabola in the $(v_2,\mu_2)$-parameter plane at leading order. 

Including an auxiliary parameter $\tilde\mu_2$ we can trace tangent heteroclinic connections in the $(\mu_2,\tilde\mu_2)$-parameter plane. Tangencies are located at roots of the derivative of the bifurcation equation with respect to $v_2$. At leading order this gives $v_2=0$ so that to leading order tangent heteroclinic orbits lie at $\calM_2\mu_2 + \tilde\calM_2\tilde\mu_2=0$.

\subsection{Bifurcations from homoclinic orbits}

The situation of the previous section for $p_1=p_2$ allows for more interesting solutions and the `hom' as well as `per' closing conditions. To serve readability we omit the subscript which labels the single equilibrium or periodic orbits in the following.

We consider the generic transverse case $H^\rms \cap \hat H^\rmu = \{0\}$ where no parameter $v_2$ occurs. Since $d_2=1$ in that case, all reduced index sets contain only one element, and for those we omit the labels. Hence, for any itinerary, each of the bifurcation equations reads, with $\mu, \beta, \gamma\in\R$,
\begin{equation}\label{e:homeq}
\mu = 
\rme^{-2\kappa^\rmu L_j}\cos(2\sigma^\rmu L_j + \beta)\eqa
+ \rme^{-2\kappa^\rms L_{j-1}}\cos(2\sigma^\rms L_{j-1} + \gamma)\eqb.
\end{equation}
Here we omitted the term $\calR_j$ and set $\zeta_j''=\xi_j''=0$ since these terms do not appear at leading order in the following considerations. The occurrence of parameters $L_r$ and the range of indices depends on the choice of itinerary.

We first consider an equilibrium $p$ where $\calL(L^*)$ is continuous, and then a periodic orbit $p$ where $\calL(L^*)$ is discrete (and the un/stable dimensions change).

\subsubsection{Equilibrium at $p$}\label{s:hombifeq}

The analysis of homoclinic orbits that do not pass by the equilibrium  $p$, i.e., of $1$-homoclinic orbits, is the same as in Section~\ref{s:hetsaddle}.

\medskip
\paragraph{2-homoclinic orbits} Homoclinic orbits could pass by $p$ any number of times before connecting to the un/stable manifold. Striving for illustration, we only consider $2$-homoclinic orbits that pass by $p$ once.

The itinerary $\calC$ for these orbits is as in Figure~\ref{f:2-hom} in \S\ref{s:intro} and contains three equilibria $p_j=p_1^*$, $j=1,2,3$, under the `hom' closing condition. The un/stable dimensions are all the same, respectively, so $d_2=d_3=1$. When applying Theorem~\ref{t:2} with the `hom' closing condition, one free parameter $L_2$ appears (note $L_1=L_3=\infty$), see Figure~\ref{f:2-hom}, and from \eqref{e:homeq} we obtain the system of bifurcation equations
\begin{eqnarray*}
\mu &=& \rme^{-2\kappa^\rmu L}\eqa\cos(2\sigma^\rmu L + \beta)\\
\mu &=& \rme^{-2\kappa^\rms L}\eqb\cos(2\sigma^\rms L +\gamma),
\end{eqnarray*} 
the first for $j=2$ and the second for $j=3$. Equating the right hand sides gives the solvability condition
\[
\rme^{-2\kappa^\rmu L}\eqa\cos(2\sigma^\rmu L + \beta)= \rme^{-2\kappa^\rms L}\eqb\cos(2\sigma^\rms L +\gamma).
\] 
If $\kappa^\rms\neq \kappa^\rmu$, say $\kappa^\rmu< \kappa^\rms$, the leading order equation is
\[
\rme^{-2\kappa^\rmu L}\eqa\cos(2\sigma^\rmu L + \beta)= 0,
\]
Hypothesis~\ref{h:lead} implies $\eqa\neq 0$ so that solutions at leading order exist if and only if $\sigma^\rmu \neq 0$, which is the well-known Shil'nikov saddle-focus configuration. The arising infinite sequence of solutions persists (due to transversality) under the higher order perturbation of the remainder term.

Concerning vanishing coefficients of leading terms, we outline the result mentioned in Remark~\ref{r:lead} in case $\bar \calR$ is higher order with respect to the terms in \eqref{e:homeq} (roughly speaking this is valid for small difference of leading stable and unstable rate, and large gap to the next leading rates).
We thus keep all terms from \eqref{e:homeq} and obtain
\begin{equation}\label{e:2hom}
\rme^{2L(\kappa^\rmu-\kappa^\rms)} = \frac{\eqa\cos(2\sigma^\rmu L + \beta)}{\eqb\cos(2\sigma^\rms L +\gamma)}
\end{equation}
For real leading eigenvalues a sign change of $\zeta$ for $\xi\neq 0$ implies the emergence of a solution from $L=\infty$, i.e., for sufficiently large $L$ for Theorem~\ref{t:2} to apply. As mentioned, the results in \cite{bjorndiss} provide a complete study of this situation and, in particular, do not require such restrictive spectral configurations. 

In the resonant situation $\kappa^\rms = \kappa^\rmu$ equation~\eqref{e:2hom} applies as well with left hand side equal to $1$. This yields solutions if either $\sigma^\rms=0$ or $\sigma^\rmu=0$, or else under non-resonance conditions on these and $\gamma, \beta$. Typically, infinitely many solutions persist when including the higher order terms. For real leading eigenvalues there is no solution if $\eqa\neq \eqb$.

\medskip
\paragraph{$1$- and $2$-periodic orbits} Periodic orbits could pass by $p$ any number of times each period, but here we only consider the cases where this number is one or two. Typically stable and unstable rates  differ, say $\kappa^\rmu<\kappa^\rms$, and the coefficient $\xi\neq 0$. For the $1$-periodic orbits the leading order equation according to \eqref{e:homeq} is
\[
\mu = \rme^{-2\kappa^\rms L}\eqb \cos(2\sigma^\rms L+\gamma).
\]
The well-known result follows that only for $\sigma^\rms\neq 0$ periodic orbits at $\mu=0$ co-exist with the homoclinic orbit, and in fact accumulate on it. 

The $2$-periodic orbits are encoded in the itinerary of the $2$-homoclinic orbits with `per' closing conditions so that indices of $L$ need to be taken mod $2 +1$ and \eqref{e:homeq} yields

\begin{eqnarray*}
\mu &=& \rme^{-2\kappa^\rmu L_2}\eqa\cos(2\sigma^\rmu L_2 + \beta) + \rme^{-2\kappa^\rms L_1}\eqb\cos(2\sigma^\rms L_1 +\gamma)\\
\mu &=& \rme^{-2\kappa^\rmu L_1}\eqa\cos(2\sigma^\rmu L_1 +\beta) + \rme^{-2\kappa^\rms L_2}\eqb\cos(2\sigma^\rms L_2 + \gamma),
\end{eqnarray*}
with the two parameters $L_2$ and $L_1$. Both equations coincide with that for $1$-periodic orbits if $L_2=L_1$.

In case $\kappa^\rms > \kappa^\rmu$ the solvability condition to leading order as $L^*\to\infty$ is 
\[
\rme^{-2\kappa^\rmu L_2}\eqa\cos(2\sigma^\rmu L_2 + \beta) 
= \rme^{-2\kappa^\rmu L_1}\eqa\cos(2\sigma^\rmu L_1 + \beta),
\]
or equivalently, for $\eqa\neq 0$,
\[
\rme^{2\kappa^\rmu (L_1-L_2)}
= \frac{\cos(2\sigma^\rmu L_1 + \beta)}{\cos(2\sigma^\rmu L_2 + \beta)},
\]
which shows that for the Shil'nikov saddle-focus with $\sigma^\rmu\neq 0$ there are infinitely many persistent solutions.

Assuming that $\bar\calR$ is higher order with respect to all terms in \eqref{e:homeq} (i.e., the spectral configuration is as mentioned in the $2$-homoclinic case) the resulting leading order solvability condition for $\sigma^\rmu=\sigma^\rms=0$ and $\eqb\neq 0$, can be written as
\begin{equation}\label{e:hom2per}
\frac
{\rme^{-2\kappa^\rms L_2} - \rme^{-2\kappa^\rms L_1}}
{\rme^{-2\kappa^\rmu L_2}-\rme^{-2\kappa^\rmu L_1}}
=\frac{\eqa}{\eqb}.
\end{equation}
We observe that a sign change of $\zeta$ leads to the period doubling bifurcation of a solution curve $L_2\sim L_1$ with either $L_2>L_1$ or vice versa. Note the analogy to the $2$-homoclinic case. 

In the resonant case $\kappa^\rms = \kappa^\rmu$, for $\sigma^{\rms/\rmu}=0$ the left hand side of \eqref{e:hom2per} is $1$ so that solutions do not exist for $\zeta\neq \xi$. 

\subsubsection{Periodic orbit at $p$}\label{s:chaos}

The form of the abstract bifurcation equations does not change much when $p$ is a periodic orbit, only $L$ is discrete, but the set of solutions near the homoclinic orbit to $p$ may change dramatically.

The reason is that such a homoclinic orbit is generically codimension-0 since the flow direction is the center direction which counts towards stable and unstable dimensions. Indeed, typically the complement to $\hat H^\rmu + H^\rms$ only contains the flow direction so that $d=0$. Hence, there is no parameter needed and no solvability condition. This means that under the assumptions of Lemma~\ref{l:match1} solutions for all itineraries and for any small parameter perturbation can be constructed by adjusting the coordinate parameters $(\om^\rms, \hat\om^\rmu)$ according to the expansion in that lemma; note that all constants depend only on $p$ and $q$, and in particular are uniform for all $\calC$.

Complicated dynamics typically occurs since the diffeomorphism generated by any suitable Poincar\'e map has a transverse homoclinic orbit which is one of the paradigms of chaotic dynamics \cite{palistakens}. Note that in the present setup the ambient dimension is arbitrary.

We next show how Theorem~\ref{t:2} can be used to prove conjugacy of the dynamics on the local invariant set to shift dynamics on two symbols. Let $\calY\subset\calU$ be the set of \emph{trajectories} $\{u(x): x\in\R\}$ contained entirely in the neighbourhood $\calU$ of $q$ from Theorem~\ref{t:2}(3). Take a suitably large Poincar\'e section $\Sigma_p$ transverse to the flow containing $p(0)$. For all solutions in $\calY$ with $u(0)\in\Sigma_1$ (without loss of generality), we find a unique sequence $x_s\in \R$,  $s\in\mathbb{Z}$ such that $x_0=0$, $x_s<x_{s+1}$ and $x_{s+1}-x_s$ is minimal so that $u(x_s)\in\Sigma_1\cup\Sigma_p$ for all $s$.

This defines a unique symbol sequence $(a_s)_{s\in\mathbb{Z}}$, $a_s\in \{X,Y\}$ by setting $a_s=Y$ if $u(x_s)\in\Sigma_1$, and $a_s=X$ if $u(x_s)\in\Sigma_p$. 
Since we require a minimum travel time $L^*$ from $\Sigma$ to $\hat\Sigma$ (the time from $\hat\Sigma$ to $\Sigma$ is constant) there is a well defined minimal number $j_Y(L^*)$ of $X$'s after each $Y$ in the sequence.

\begin{corollary}\label{c:homper}
Assume \eqref{e:ode} possesses a homoclinic orbit $q$ to a hyperbolic periodic orbit $p$, and suppose that $\dim(\hat H^\rmu + H^\rms) = n-1$. Then there is a number $L^*>0$ and a neighbourhood $\calU$ of $q$ such that the invariant set $\calY$ in $\calU$ is bijective to the set of sequences $\{(a_s)_{s\in\mathbb{Z}}\}$ defined above for which there are at least $j_Y(L^*)$ symbols $X$ after each $Y$.
\end{corollary}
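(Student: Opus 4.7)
My plan is to apply Theorem~\ref{t:2}(3) to the bi-infinite itinerary $\calC$ with $\virI = \Z$ consisting of a single periodic vertex $p$ and a single edge $q$ (no explicit closing condition is required for $\virI = \Z$), and then read off the bijection between its discrete travel-time parameters and the admissible symbol sequences. The assumption $\dim(\hat H^\rmu + H^\rms) = n-1$, combined with the discussion at the start of \S\ref{s:chaos}, yields $d = 0$: the one-dimensional complement of $\hat H^\rmu + H^\rms$ consists of the flow direction, which is handled automatically by the flow-box construction of Remark~\ref{r:flow}. Since moreover $H^\rms \cap \hat H^\rmu = \{0\}$ we have $\virI^\rmt = \emptyset$ and no coordinate parameters $v_j$ arise, so Hypothesis~\ref{h:mel} is vacuous. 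Theorem~\ref{t:2}(3) therefore provides an $L^* > 0$ and a neighbourhood $\calU$ of $\{q(x) : x \in \R\}$ such that the travel-time sequences $\bar L \in \calL(L^*) = \prod_{k \in \Z} K(L^*)$ are in bijection with the trajectories $u$ of \eqref{e:ode} at $\mu = 0$ contained in $\calU$ for all time, where the bijection encodes $\bar L$ as the ordered gaps between $\Sigma_1$-crossings of $u$.

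Next I would encode $\calL(L^*)$ as admissible symbol sequences. Since $p$ is periodic with minimal period $T$, we have $K(L^*) = \{\ell T/2 : \ell \in \N,\, \ell T/2 \geq L^*\}$, which I identify with the set of integers $\ell \geq j_Y(L^*) := \min\{\ell \in \N : \ell T/2 \geq L^*\}$. Given $\bar L \in \calL(L^*)$ with $L_k = \ell_k T/2$, define $(a_s)_{s \in \Z}$ by placing $Y$ at positions $s_k := \sum_{i=0}^{k-1}(\ell_i + 1)$ (normalised so $s_0 = 0$ and extended to $k<0$) and $X$ elsewhere. Then each $Y$ is followed by exactly $\ell_k \geq j_Y(L^*)$ symbols $X$, so admissibility holds; conversely any admissible sequence (normalised so that $a_0 = Y$) determines $\ell_k$ and thus $L_k$ uniquely, giving a bijection between $\calL(L^*)$ and admissible sequences.

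It remains to verify that the symbol sequence extracted from a trajectory $u \in \calY$ by the enumeration $(x_s)$ in the statement matches the combinatorial sequence produced above. Between consecutive $\Sigma_1$-crossings $u(x_{s_k})$ and $u(x_{s_{k+1}})$, Theorem~\ref{t:1} reconstructs $u$ as $q + W_k$ on $[0,L_k]$ followed by $\hat q + \hat W_k$ on $[L_k, 2L_k]$ with $2L_k = \ell_k T$. Since $\Sigma_p$ is transverse to the flow at $p(0)$ and $p$ itself crosses $\Sigma_p$ exactly $\ell_k$ times in any half-open time window of length $\ell_k T$, the exponential estimates \eqref{e:est-stab} and \eqref{e:est-unstab}, together with transversality of $\Sigma_p$, force the perturbed trajectory to cross $\Sigma_p$ exactly $\ell_k$ times on $[0, 2L_k]$, with no spurious intersections with $\Sigma_1$ in between. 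Hence the extracted sequence has precisely $\ell_k$ symbols $X$ between the $k$th and $(k+1)$st $Y$, matching the combinatorial construction, and uniqueness in Theorem~\ref{t:2}(3) closes the bijection.

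The main obstacle is this Poincar\'e-crossing count. One must rule out extra intersections with $\Sigma_1$ during the passage near $p$ and confirm that the perturbed trajectory and $p$ share the same number of intersections with $\Sigma_p$. Both follow from $C^1$-closeness provided by Theorem~\ref{t:1}(3) together with transversality of the sections, implemented by a standard implicit-function argument at each Poincar\'e section; this fixes the final values of $L^*$ and of the radius of $\calU$. Everything else is bookkeeping on top of Theorem~\ref{t:2}.
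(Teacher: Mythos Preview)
Your approach via the single bi-infinite itinerary $\virI = \Z$ covers only those trajectories in $\calY$ that intersect $\Sigma_1$ infinitely often in both forward and backward time, i.e.\ only admissible symbol sequences containing infinitely many $Y$'s on each side. But $\calY$ also contains the periodic orbit $p$ itself (symbol sequence all $X$'s), homoclinic and $N$-homoclinic orbits to $p$ (finitely many $Y$'s), and orbits lying in exactly one of $\calW^\rms(p)$, $\calW^\rmu(p)$ (infinitely many $Y$'s on one side only). All of these yield admissible sequences in the sense of the corollary, and none are reached by your encoding: your step ``conversely any admissible sequence (normalised so that $a_0 = Y$) determines $\ell_k$\ldots'' already fails for the all-$X$ sequence and for sequences with only finitely many $Y$'s, since then the $\ell_k$ are undefined for all but finitely many $k\in\Z$, and no element of $\prod_{k\in\Z} K(L^*)$ corresponds to them.

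The paper closes this gap by taking the union over \emph{all} itineraries built from the single vertex $p$ and edge $q$, with closing condition (`per', `hom', `semi$\pm$', or $\virI = \Z$) chosen according to the structure of the given symbol sequence: one strips $j_Y(L^*)$ trailing $X$'s after each $Y$, then reads off whether the residual sequence is periodic, eventually constant on one or both sides, or genuinely bi-infinite, and picks the matching itinerary type. Uniformity of $L^*$ and $\calU$ over this family is what makes the union work. Your Poincar\'e-crossing count sketch is fine and is not where the difficulty lies; the missing ingredient is this case distinction over itinerary types.
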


\begin{proof}
In this case there is no system parameter $\mu$ and no coordinate parameter $v$ appears in Theorem~\ref{t:2}. Hence, there is a minimal travel time $L^*$ and a neighbourhood $\calU$ of $q$ such that the solutions for all itineraries in that neighbourhood are bijective to the sequences of travel times in $\calL(L^*)$. Since any orbit that lies in $\calU$ for all time has a unique such sequence, the entire invariant set $\calY$ in $\calU$ is bijective to the sequences in 
\[
\calL^*:= \{\calL(L^*): \calC\mbox{ is an itinerary}\}.
\]
In particular, any orbit in $\calY$ has a unique itinerary of intersections with $\Sigma_p$ and $\Sigma_1$ as defined above, i.e., the map from travel time to these symbol sequences is injective.

To prove surjectivity consider a symbol sequence $\{(a_s)_{s\in\mathbb{Z}}\}$ with at least $j_Y(L^*)$ symbols $X$ after each $Y$. We construct an itinerary that generates a solution with that sequence. For this we define a subsequence $(b_s)_{s\in B}$, $B\subset \mathbb{Z}$ of $a_s$ and then consider the itinerary generated by the sequence of $Y$'s in $b_s$. First remove $j_Y(L^*)$ symbols $X$ after each zero in $(a_s)$. If the resulting sequence is periodic, then $b_s$ is a minimally periodic subsequence, say of period $m$, and we employ the `per' closing conditions. If the resulting sequence is constant for $s\geq s_+$ and/or for $s\leq s_-$ then (these must be constant $X$'s) $b_s$ is defined as the sequence between such maximally chosen $s_-$ and/or minimally chosen $s_+$. If the resulting $B$ is bounded we employ the `hom' closing conditions, else the corresponding `semi' condition. 

Let $\calC$ be the itinerary $p_j=p_1^*$ and $q_j=q_1^*$, where $\virI$ is bijective to $\{s : b_s=X\}$ and $\min \virI=1$ if it exists. There is a unique solution obtained by Theorem~\ref{t:2} for that itinerary with $L_j=\ell_j T$ where $\ell_j \geq j_Y(L^*)$ is the number of consecutive $X$'s in $b_s$ that follow the $Y$ corresponding to $j$ in the bijection between $\virI$ and $\{s : b_s=X\}$. The above defined map from $\calY$ to these symbol sequences thus surjective.~\hfill{}
\end{proof}

\begin{corollary}
The dynamics of \eqref{e:ode} on the invariant set near a transverse homoclinic orbit to a hyperbolic periodic orbit (i.e., the trichotomy spaces satisfy $\dim(\hat H^\rmu + H^\rms) = n-1$) is conjugate to (suspended) shift dynamics on two symbols.
\end{corollary}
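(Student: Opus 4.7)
The plan is to build the conjugacy directly from the bijection of Corollary~\ref{c:homper}. Let $\Omega \subset \{X,Y\}^{\mathbb{Z}}$ denote the shift-invariant set of admissible symbol sequences (those with at least $j_Y(L^*)$ symbols $X$ between consecutive $Y$'s), equipped with the standard product topology, and let $\sigma:\Omega\to\Omega$ be the left shift. On the flow side, first define the discrete-time object: let $R$ denote the first-return map of \eqref{e:ode} to the section $\Sigma := \Sigma_1 \cup \Sigma_p$, restricted to the compact invariant set $\calY \cap \Sigma$. Corollary~\ref{c:homper} already supplies a bijection $\Phi : \calY\cap\Sigma \to \Omega$, and by the way the symbols $a_s$ are assigned along consecutive intersections with $\Sigma$, one immediately has the semi-conjugacy $\Phi\circ R = \sigma\circ\Phi$, so only the topological part remains.

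The homeomorphism property will be established in two steps. For continuity of $\Phi$, I would use continuous dependence on initial conditions on a finite time window: if two orbits in $\calY$ start close in $\Sigma$, then on any finite central block $\{s : |s|\leq N\}$ their itineraries of intersections with $\Sigma_1 \cup \Sigma_p$ coincide, provided the initial distance is sufficiently small; this is exactly the statement that $\Phi$ is continuous into the product topology on $\Omega$. For continuity of $\Phi^{-1}$, the key input is Theorem~\ref{t:2}: the solution associated to an itinerary depends $C^k$-smoothly on the travel-time sequence $\bar L \in \calL(L^*)$ in the norm \eqref{e:normL}, and $\bar L$ varies continuously with the symbol sequence in the product topology because changing a symbol far out in the sequence changes $\bar L$ only in components with large index $j$, which are exponentially down-weighted by $\lambda^j$. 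Combined with compactness of $\calY\cap\Sigma$ (which follows because $\calY$ lies in a bounded neighbourhood $\calU$ and is closed as the maximal invariant set in $\calU$), a continuous bijection to the Hausdorff space $\Omega$ is a homeomorphism.

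To pass from the discrete-time conjugacy $(R,\calY\cap\Sigma) \simeq (\sigma,\Omega)$ to a conjugacy of the full continuous flow on $\calY$, I would invoke the standard suspension construction. Define the roof function $\tau:\Omega\to(0,\infty)$ to be the return time to $\Sigma$ for the orbit with symbol sequence $a$; Theorem~\ref{t:2}(3) expresses $\tau$ in terms of the first travel time component $L_1$ (plus a fixed near-periodic transit), and $\tau$ is continuous and bounded below by a positive constant since $L_1\geq L^*$. The suspension $(\Omega_\tau,\sigma_\tau)$ of $(\Omega,\sigma)$ under $\tau$ then admits a continuous bijection to $\calY$ intertwining $\sigma_\tau$ with the flow of \eqref{e:ode}; by the same compactness argument this is a flow conjugacy.

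The main obstacle I expect is the continuity of $\Phi^{-1}$. Corollary~\ref{c:homper} is purely set-theoretic, so the weighted norm $\|\cdot\|_\calL$ of \eqref{e:normL} must be leveraged carefully to match the product topology on $\Omega$; in particular, one must verify that the uniform constants in Theorem~\ref{t:2} (which depend only on $\calC_\rmred$, here a single homoclinic connection) yield estimates that are uniform across \emph{all} admissible itineraries simultaneously, so that perturbations of $\bar L$ in far indices produce genuinely small perturbations of the trajectory on any fixed time window. Once this uniformity is in hand, the remaining ingredients are soft.
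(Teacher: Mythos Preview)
Your approach is essentially the same as the paper's: both rely on Corollary~\ref{c:homper} for the bijection and on continuous dependence of the solution on the travel-time sequence $\bar L$ in the weighted norm~\eqref{e:normL} for continuity of the inverse. The paper also argues both directions of continuity directly (its forward direction goes through convergence of $\bar L$ rather than finite-time continuous dependence, but these amount to the same thing), so your compactness shortcut is a minor stylistic variation rather than a different idea.

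The one substantive structural difference is the target of the conjugacy. You conjugate the first-return map to the shift on the \emph{subshift} $\Omega\subset\{X,Y\}^{\mathbb{Z}}$ of admissible sequences and then suspend. The paper instead recodes to the \emph{full} shift on $\{0,1\}^{\mathbb{Z}}$ via the substitution $1\mapsto YX^{j_Y(L^*)}$, $0\mapsto X$, and builds the conjugacy to that; the variable-length substitution is absorbed into the roof function. Both are legitimate readings of ``(suspended) shift dynamics on two symbols'', and the two suspended systems are flow-equivalent, but the paper's recoding makes the full-shift structure explicit where yours leaves it implicit.

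One small inaccuracy worth fixing: your roof function $\tau$ is the first-return time to $\Sigma_1\cup\Sigma_p$, which is uniformly bounded above (by roughly one period $T_P$ of $p$, or the transit time from $\Sigma_1$ into a neighbourhood of $p$) and is not governed by $L_1$. The travel times $L_j$ instead record how many consecutive returns to $\Sigma_p$ occur between successive visits to $\Sigma_1$; they enter the symbol sequence, not the roof. This does not affect the validity of the suspension argument.
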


\begin{proof}
Let $L^*$ be as in Theorem~\ref{t:2} in this setting and $\calY$ the local invariant set. Let $(c_r)_{r\in \mathbb{Z}}$ be a bi-infinite sequence of symbols $0$ and $1$. We define the sequence $a_s$ with symbols and meaning as above: replace all $1$ by $Y$ followed by $j_Y(L^*)$ $X$'s, and replace all $0$ by $X$.

By Corollary~\ref{c:homper} there exists a unique orbit solving \eqref{e:ode} corresponding to that sequence. By construction, there is a unique sequence of time steps $x_r$, $r\in\mathbb{Z}$ such that the time evolution of the trajectory $u$ for the unique discrete times of intersection with $\Sigma_1$ and $\Sigma_p$ is precisely the shift of the index $c_r \mapsto c_{r+1}$.

Concerning continuity of this map from trajectories to symbol sequences, we consider the usual product topology on symbol sequences where cylinders are open sets, i.e., sets with some prescribed finite sequence of adjacent symbols. The norm generating this topology is given by \eqref{e:normL} when taking $L_j\in\{0,1\}$, see, e.g., \cite{Kathass}. 

In the direction from $\calY$ to symbol sequences continuity follows from the construction: convergence in $\calY$ means that initial conditions converge, which implies convergence of travel time sequences $\bar L$ in $\calL(L^*)$. By construction of the symbol sequences this implies their convergence in cylinders.

Conversely, let $a$ and $a'$ be symbol sequences so that $a\to a'$ in the cylinder topology. By construction, the travel time sequences $\bar L$ and $\bar L'$ of the corresponding solutions in $\calY$ converge in $\calL(L^*)$. Since the coordinate parameters $\tilde\om_j^\rms$ and $\hat \om_{j-1}^\rmu$ are continuous in $\bar L$ this implies that the coordinate parameters converge as in Lemma~\ref{l:match1}(2). Therefore, the solutions in $\calY$ converge as well.~\hfill{}
\end{proof}

\medskip
If the homoclinic orbit $q_1$ is tangent, e.g., $\dim(H^\rmu + H^\rms)=n-2$, the bifurcation equation for 1-homoclinic orbits is the same as for the tangent source-sink heteroclinic in Section~\ref{s:hetsource}. The dynamics near such a homoclinic tangency is more complicated than in the above case, see, e.g.,  \cite{palistakens}.

\subsection{Bifurcations from EP heteroclinic cycles}

In this final section we consider heteroclinic cycles between one equilibrium $p_1=E$ and one periodic orbit $p_2=P$ with heteroclinic connections $q_{EP}=q_2$ from $E$ to $P$ and $q_{PE}= q_1$ from $P$ to $E$. Such cycles have been recently found in a number of models, see \cite{becks,krock,myhombif,sieber} and the references therein. EP cycles are also called singular cycles, and have been studied from an ergodic theory point of view in \cite{Bam94,Labarca, LaSa97,MorPac98, PaRo93}, and further papers by these authors, looking for instance at properties of non-wandering sets. 

Generally, in an EP cycle one connection is generically transverse, while the other has a positive codimension, see \cite{myhombif}. Here we consider the following three cases:

\begin{list}{\labelitemi}{\leftmargin=5em}
\item[EP1:~] the connection from $E$ to $P$ is transverse and one-dimensional, and the connection from $P$ to $E$ is codimension-1,
\item[EP2:~] the connection from $E$ to $P$ is transverse and two-dimensional and the connections from $P$ to $E$ is codimension-2,
\item[EP1t:] the connection from $E$ to $P$ is codimension-1 and the connection from $P$ to $E$ is tangent.
\end{list}

Concerning stable and unstable dimensions at $E$ and $P$ let $i_E$ be the number of unstable dimensions at $E$ and $i_P$ the number of unstable dimensions at $P$ including the flow direction. Let $d_{EP} = d_2$ be the codimension of the heteroclinic connection from $E$ to $P$ and $d_{PE}=d_1$ that for the connection from $P$ to $E$. The three cases are then as follows: 
\begin{list}{\labelitemi}{\leftmargin=5em}
\item[EP1:~]~ $i_\rmE=i_\rmP$, $d_{PE} = 1$, $d_{EP} = 0$,
\item[EP2:~]~ $i_\rmP=i_\rmE-1$, $d_{PE} = 2$, $d_{EP} = -1$, 
\item[EP1t:]~ $i_\rmE=i_\rmP-1$, $d_{PE} = d_{EP} = 1$.
\end{list}

As in \eqref{e:homeq} for the homoclinic orbits to an equilibrium, we use this and Theorem~\ref{t:2} to obtain the form of the bifurcation equations without choosing a specific itinerary now, and omitting $\calR_j$ and $B_{j+1}^\rmu$, $B_{j-2}^\rms$ for the moment. The occurrence of parameters $v_j$, $L_j$ and the range of indices then depends on the choice of itinerary.

\begin{figure}
\begin{center}
\input{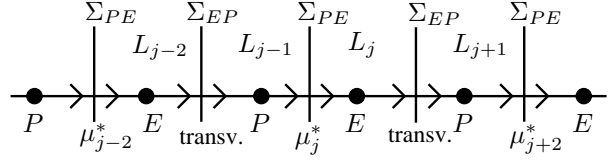}
\caption{Schematic plot of a segment of a general itinerary in an EP1 or EP2 heteroclinic cycle. Passing through $\Sigma_{EP}$ does not require a parameter $\mu_{j\pm 1}$ since the heteroclinic connection from $E$ to $P$ is transverse, but in the EP2 case a parameter $v_{j\pm1}\in\R$ appears. On the other hand, passing through $\Sigma_{PE}$ at position $j$ generates itinerary parameters $\mu_j\in\R^{d}$ for the EP$d$ case, since this connection has codimension $d=1$ or $d=2$.}
\label{f:EP1gen}
\end{center}
\end{figure}

For the EP1 case only the connection from $P$ to $E$ generates a parameter, i.e., if index $j$ corresponds to this connection then there is no bifurcation equation for indices $j\pm 1$ for any itinerary, see Figure~\ref{f:EP1gen}. Therefore, we denote the leading eigenvalues with subindices $E$ and $P$ as they always appear in the same order in the bifurcation equations which have the form
\begin{equation}
\mu = 
\rme^{-2\kappa^\rmu_E L_j}\cos(2\sigma^\rmu_E L_j + \beta)\eqa
+ \rme^{-2\kappa^\rms_P L_{j-1}}\cos(2\sigma^\rms_P L_{j-1} + \gamma)\eqb.\label{e:EP1}
\end{equation} 

For the EP2 case the connection from $E$ to $P$ is transverse and hence does not require system parameters. But the set of heteroclinic points from $E$ to $P$ is two-dimensional, i.e., a parameter $v_{j'}\in\R$ arises for $j'$ corresponding to that connection. In any itinerary the index $j=j'\pm 1$ then corresponds to the connection from $P$ to $E$, which has codimension-2 so that $\mu_j^*$ from Theorem~\ref{t:2} is two-dimensional and we write $\mu_j^* = (\mu_1,\mu_2)$ since this itinerary parameter always corresponds to the same system parameters. See Figure~\ref{f:EP1gen} for illustration. 
Note that in this case $\eqa, \eqb$, and the image of $\eqa', \eqb'$ are diagonal 2-by-2 matrices.
With subindices $E$ and $P$ as for the EP1 case, the bifurcation equations read, for $r=1,2$,
\begin{eqnarray}
\mu_r &=& 
\rme^{-2\kappa^\rmu_E L_j}\cos(2\sigma^\rmu_E L_j +\beta_{r} + \beta_{r}'v_{j+1})(\eqa_{r} + \eqa_{r}'v_{j+1})
 \nonumber\\
&&+ \rme^{-2\kappa^\rms_P L_{j-1}}\cos(2\sigma^\rmu_E L_j +\gamma_{r} + \gamma_{r}'v_{j-1})(\eqb_{r} + \eqb_{r}'v_{j-1}), \label{e:EP2}
\end{eqnarray}
where $L_j\in[L^*,\infty)$ measures the time spent between $\Sigma_{PE}$ and $\Sigma_{EP}$, while the discrete $L_{j-1}\in K_P(L^*)$ approximately measures that between $\Sigma_{EP}$ and $\Sigma_{PE}$, see Figure~\ref{f:EP1gen}.

The parameters $v_{j\pm1}\in\R$ can be viewed as varying the underlying reference heteroclinic connection from $E$ to $P$. If the heteroclinic set from $E$ to $P$ has a winding number this can be used to obtain a continuous parameter $L_{j-1}$ for the travel time near $P$, see \cite{myhombif}.

\medskip
In the EP1t case all $\mu_j^*\in\R$ are one-dimensional and the connection from $E$ to $P$ is codimension 1 while the connection from $P$ to $E$ is tangent. The tangency generates coordinate parameters $v_{j\pm1}\in\R$ and we write $\calT_2(v)= a v^2$ for $a\in \R$. Note that $v_{j\pm1}$ also appear in the bifurcation equation for the codimension-1 connection which is neighbouring this in any itinerary. Due to this coupling the resulting bifurcation equations cannot be reduced to the same basic building block form above, but to
\begin{eqnarray}
\mu_2 &=& a v_{j-1}^2 
+ \rme^{-2\kappa_E^\rmu L_{j-1}}\cos(2\sigma_E^\rmu L_{j-1} + \beta_2)\eqa_2 
+ \rme^{-2\kappa_P^\rms L_{j-2}}\cos(2\sigma_P^\rms L_{j-2} + \gamma_2)\eqb_2\nonumber\\
\mu_1 &=& \rme^{-2\kappa_P^\rmu L_j}\cos(2\sigma_P^\rmu L_j + \beta_1 + \beta_1' v_{j+1})(\eqa_1 + \eqa_1'v_{j+1})\nonumber\\ 
&&+ \rme^{-2\kappa_E^\rms L_{j-1}}\cos(2\sigma_E^\rms L_{j-1} + \gamma_1 + \gamma_1' v_{j-1})(\eqb_1+\eqb_1' v_{j-1}), \label{e:ep1t}\\
\mu_2 &=& a v_{j+1}^2 
+ \rme^{-2\kappa_E^\rmu L_{j+1}}\cos(2\sigma_E^\rmu L_{j+1} + \beta_2)\eqa_2 
+ \rme^{-2\kappa_P^\rms L_j}\cos(2\sigma_P^\rms L_j + \gamma_2)\eqb_2. \nonumber
\end{eqnarray}
Here $\mu_1$ unfolds the connection from $E$ to $P$ and $\mu_2$ from $P$ to $E$. The first and last equation form a solvability condition, if the itinerary under consideration is that long. 

\subsubsection{EP1 and EP2}
The loci of simply recurrent solutions whose itinerary has no repetitions are explicitly given for the EP1 and EP2 case by the above equations: for `hom' set either $L_j=\infty$ or $L_{j-1}=\infty$ and note that $L_{j-1}$ is discrete; for `per' any $(L_j,L_{j-1})$ for $j=1$ with $L_{j-1}$ discrete generates a solution. 

For illustration we next consider $2$-homoclinic orbits in the EP1 case; this also indicates complications arising from the terms omitted in \eqref{e:EP1}.

\begin{figure}
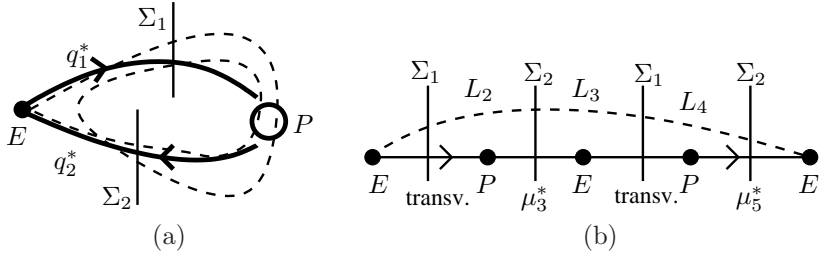

\begin{center}
\begin{tabular}{cc}
\input{EP1-2-hom.pstex_t}
& \input{EP1-2-hom-b.pstex_t}\\
(a) & (b)
\end{tabular}
\caption{Sketch of an itinerary for a 2-homoclinic orbit for an EP1 heteroclinic cycle. (a) the heteroclinic cycle composed of $q_1^*$, $q_2^*$ (solid), and a 2-homoclinic orbit (dashed). (b) Schematic plot of the itinerary (solid) for a 2-homoclinic orbit (dashed) for which $L_1=L_5=\infty$. Note that the passing through $\Sigma_1$ does not generate a parameter since the heteroclinic connection from $E$ to $P$ is transverse. However, each passing through $\Sigma_2$ generates a single itinerary parameter.}
\label{f:EP1}
\end{center}
\end{figure}

$2$-homoclinic orbits to $E$ pass by $E$ once and $P$ twice so that the itinerary is as in Figure~\ref{f:EP1} and gives three parameters $L_2, L_3, L_4$ where $L_2, L_4$ are discrete. Since in this case we will get a solvability condition where the error terms have exponents from different $p_j$, they need to be treated more carefully. From~\eqref{e:EP1} the bifurcation equations including $\calR_j$ and $B_\rmE^\rms$, $B_\rmP^\rmu$ as error terms, are 
\begin{eqnarray*}
\mu &=& 
\rme^{-2\kappa^\rmu_E L_3}\cos(2\sigma^\rmu_E L_3 + \beta)\eqa
+ \rme^{-2\kappa^\rms_P L_2}\cos(2\sigma^\rms_P L_2 + \gamma)\eqb\\
&& + \calO\left(
\rme^{-(2\kappa_\rmE^\rmu + \delta_\rmE^\rmu)L_3} 
+ \rme^{-(2\kappa_\rmP^\rms + \delta_\rmP^\rms)L_2} 
+ \rme^{-2(\kappa_\rmE^\rmu L_3 + \kappa_\rmP^\rmu L_4)}\right)\\
\mu &=& \rme^{-2\kappa^\rms_P L_4}\cos(2\sigma^\rms_P L_4 + \gamma)\eqb
 + \calO\left(
 \rme^{-(2\kappa_\rmP^\rms + \delta_\rmP^\rms)L_4}
 + \rme^{-2(\kappa_\rmE^\rms L_3 + \kappa_\rmP^\rms L_2)}\right).
\end{eqnarray*}

Subtracting these equations gives the solvability condition
\begin{eqnarray*}
0 &=& \left(\rme^{-2\kappa^\rms_P L_2}\cos(2\sigma^\rms_P L_2 + \gamma)
- \rme^{-2\kappa^\rms_P L_4}\cos(2\sigma^\rms_P L_4 + \gamma)\right)\eqb\\
&+& \rme^{-2\kappa^\rmu_E L_3}\cos(2\sigma^\rmu_E L_3 + \beta)\eqa\\
&&+ \calO\left(\rme^{-(2\kappa_\rmP^\rms + \delta_\rmP^\rms)L_4}
 + \rme^{-(2\kappa_\rmP^\rms + \delta_\rmP^\rms)L_2}
 + \rme^{-(2\kappa_\rmE^\rmu + \delta_\rmE^\rmu)L_3} 
 \right) \\
&&+ \calO\left( \rme^{-2(\kappa_\rmE^\rmu L_3 + \kappa_\rmP^\rmu L_4)}
 + \rme^{-2(\kappa_\rmE^\rms L_3 + \kappa_\rmP^\rms L_2)}\right).
\end{eqnarray*}

Since $L_2,L_3,L_4$ are free parameters (for $\min\{L_2,L_3,L_4\}\geq L^*$ and within their domains) a natural starting point to find solutions are the asymptotic regimes:
\begin{enumerate}
\item (a) $L_3, L_2 \gg L_4$; (b) $L_3, L_4 \gg L_2$; (c) $L_2, L_4 \gg L_3$;
\item $L_3 \gg L_2, L_4$;
\item $L_3 \sim L_2 \sim L_4$;
\end{enumerate}
This is at first independent of the relative sizes of $\kappa_{\rmP/\rmE}^{\rms/\rmu}$; however, these are relevant when estimating the constants in the specific meaning of the `$\gg$' symbols. 

\paragraph{Case 1} For subcase (a) the asymptotic solvability condition reads
\begin{eqnarray*}
0 &=& \rme^{-2\kappa^\rms_P L_4}\cos(2\sigma^\rms_P L_4 + \gamma)\eqb
+ \calO\left(\rme^{-(2\kappa_\rmP^\rms + \delta_\rmP^\rms)L_4}\right).
\end{eqnarray*}
For $\xi\neq 0$ solutions are close to roots of the cosine-term, which, for $\sigma_P^\rms\neq 0$, are $L_4=((2m+1)\pi-\gamma)/2\sigma_P^\rms$, $m \in \N$. However, $L_4= \ell T_P/2$, $\ell\in\N$ and for generic $T_P$ the previous fails for any $m, \ell$, so that, also for $\sigma_P^\rms =0$, typically there is no solution to the asymptotic equation.  In the resonant case $T_P = ((2m_*+1)\pi-\gamma)/\ell_*\sigma_P^\rms$,  and with $\ell = m\ell_*$ in $L_4$, the full solvability condition reads
\begin{eqnarray*}
0 &=& \rme^{-2\kappa^\rms_P L_2}\cos(2\sigma^\rms_P L_2 + \gamma)\eqb + \rme^{-2\kappa^\rmu_E L_3}\cos(2\sigma^\rmu_E L_3 + \beta)\eqa\\
&&+ \calO\left( \rme^{-2(\kappa_E^\rmu L_3 + \kappa_P^\rmu L_4)}
 + \rme^{-2(\kappa_E^\rms L_3 + \kappa_P^\rms L_2)}\right)\\
&&+ \calO\left(\rme^{-(2\kappa_P^\rms + \delta_P^\rms)L_4}
 + \rme^{-(2\kappa_P^\rms + \delta_P^\rms)L_2}
 + \rme^{-(2\kappa_E^\rmu + \delta_E^\rmu)L_3} 
 \right).
\end{eqnarray*}

This can be solved under the condition $L_3, L_2 \gg L_4$ if the cosine term with the continuous $L_3$ is leading order. If the difference in `$\gg$' is not too big, this is possible if $\eqa\neq 0$, $\kappa_\rmP^\rms + \delta_\rmP^\rms/2 > \kappa_\rmE^\rmu$, and requires $\ell=m_2\ell_*$, unless $\kappa_P^\rms>\kappa^\rmu_E$ or $L_2\gg L_3$. Looking at the remainder term, $L_3\gg L_2$ is possible if $\kappa_\rmE^\rms + \kappa_\rmP^\rms> \kappa_\rmE^\rmu$.
In all these cases, if $\sigma_\rmE^\rmu\neq 0$ and if $L_2, L_4$ are sufficiently large, the implicit function theorem yields a sequence of solutions in $L_3$. 

Subcase (b) is analogous, but in subcase (c) $L_3$ is a continuous parameter so that, if $\zeta\neq 0$ and $\sigma_\rmE^\rmu\neq 0$, there exists an infinite sequence of robust solutions to the asymptotic equation without the resonance assumption and the constraint on the spectral gaps.

\paragraph{Case 2} The asymptotic solvability condition reads
\begin{eqnarray*}
0 &=& \left(\rme^{-2\kappa^\rms_P L_2}\cos(2\sigma^\rms_P L_2 + \gamma)
- \rme^{-2\kappa^\rms_P L_4}\cos(2\sigma^\rms_P L_4 + \gamma)\right)\eqb\\
&&+ \calO\left(\rme^{-(2\kappa_P^\rms + \delta_P^\rms)L_4}
 + \rme^{-(2\kappa_P^\rms + \delta_P^\rms)L_2} \right).
\end{eqnarray*}
If $\xi\neq 0$ and in the non-resonant case, the sum of the cosine-term vanishes if and only if $L_2=L_4$. This means that the corresponding orbit revolves about $P$ the same number of times each passing. However,  discreteness implies that $L_2, L_4$ cannot be adjusted to compensate the error terms. Hence, we look at the full solvability condition with $L_2=L_4$.
Since $L_3\gg L_4=L_2$ this provides a solvable condition if $\zeta\neq 0$ and if the cosine term in $L_3$ is leading order. Similar to Case 1(a),(b), this can be achieved if $\kappa_E^\rmu< \min\{\kappa_\rmP^\rms + \delta_\rmP^\rms/2,  \kappa_E^\rms + \kappa_P^\rms\}$, and the result is as in Case 1(c). Hence, if under these conditions a long time is spent near $E$, then the number of rotations about $P$ are locked: they must be the same at each passing. 

\paragraph{Case 3} If $\kappa_E^\rmu < \kappa_P^\rms$ the situation is as in Case 1(c), and if $\kappa_E^\rmu > \kappa_P^\rms$, typically as in Case 2, and under resonance analogous to Case 1(a).

\subsubsection{EP1t}
For simply recurrent solutions in the EP1t case, the loci of parameters are given explicitly in Corollary~\ref{c:basic}, but one is also interested in the location of turning points in the parameter curves and folds of solutions. In \cite{krock} EP1t cycles are studied in $\R^3$ using a not entirely rigorous, but geometrically intuitive approach to obtain bifurcation equations for simply recurrent solutions. Note that in $\R^3$ either $\sigma_P^{\rms/\rmu}=0$ (positive Floquet multipliers) or $\sigma_P^{\rms/\rmu} = \pi/T_P$ (negative ones) and without loss of generality one-dimensional unstable manifold so that $\sigma_E^\rmu=0$. It turns out that the present rigorous approach confirms the results of \cite{krock} in arbitrary ambient dimensions. 

Here we briefly illustrate the location of turning points and the bifurcation set for $1$-homoclinic orbits.

\medskip
\paragraph{$1$-homoclinic orbits to $E$}
This case yields the bifurcation equations
\begin{eqnarray}
\mu_1 &=& \rme^{-2\kappa_P^\rmu L}\cos(2\sigma_P^\rmu L + \beta_1 + \beta_1'v)(\eqa_1 + \eqa_1'v)\nonumber\\ 
\mu_2 &=& a v^2 
+ \rme^{-2\kappa_P^\rms L}\cos(2\sigma_P^\rms L + \gamma_2)\eqb_2.   \nonumber
\end{eqnarray}
where $v$ is continuous with $|v|< \eps$ and $L = \ell T_P/2$ ($T_P$ the period of $P$) for $\ell$ sufficiently large counting the number of oscillations about $P$. Up to an error of order $v^2$, we can write the equation for $\mu_1$ as
\[
\mu_1 = \rme^{-2\kappa_P^\rmu L}(\cos(2\sigma_P^\rmu L + \beta_1)\eqa_1 + (\cos(2\sigma_P^\rmu L + \beta_1)\eqa_1' - \sin(2\sigma_P^\rmu L + \beta_1)\eqa_1\beta_1')v).\nonumber \\
\]
Solving for $v$ and substituting the result into the equation for $\mu_2$ gives
\[
\mu_2 = a\left(\frac{\rme^{2\kappa_P^\rmu L}\mu_1 - \cos(2\sigma_P^\rmu L + \beta_1)\eqa_1}
{\cos(2\sigma_P^\rmu L + \beta_1)\eqa'_1 - \sin(2\sigma_P^\rmu L + \beta_1)\eqa_1\beta_1'}\right)^2 + 
\rme^{-2\kappa_P^\rms L}\cos(2\sigma_P^\rms L + \gamma_2)\eqb_2,
\]
note that the denominator typically never vanishes if $\eqa_1'\neq 0$ or $\eqa_1\beta'_1 \neq 0$ because $L=\ell T_P/2$ is constrained to a equi-distance discrete sequence.

Solving $\partial_{\mu_1}\mu_2=0$ gives the turning points of solution curves
\[
\mu_1=\mu_* := \rme^{-2\kappa_P^\rmu L}\cos(2\sigma_P^\rmu L + \beta_1)\eqa_1.
\]

Hence, the solution set typically is the union of parabolas with critical points at $\mu_1=\mu_*$. Depending on $\sigma_P^{\rmu}$ and $\sigma_P^\rms$ the critical points lie on the discrete evaluation of either a monotone or a `snaking' curve in the $\mu_1$ and $\mu_2$ direction, respectively, which can generate a spiraling sequence in the parameter plane.

\paragraph{$1$-homoclinic orbits to $P$} The bifurcation equations become
\begin{eqnarray}
\mu_1 &=& \rme^{-2\kappa_E^\rms L}\cos(2\sigma_E^\rms L + \gamma_1Ê+ \gamma_1'v)(\eqb_1+\eqb_1' v) \nonumber\\
\mu_2 &=& a v^2 
+ \rme^{-2\kappa_E^\rmu L}\cos(2\sigma_E^\rmu L + \beta_2)\eqa_2,\nonumber
\end{eqnarray}
where $L \geq L^*$ and $v$, $|v|< \eps$ are both continuous so that solutions typically come in two-dimensional sheets connected by folds or corners.

Eliminating $v$ as in the $E$-homoclinic case gives (for non-resonant $L$)
\[
\mu_2 = a\left(\frac{\rme^{2\kappa_E^\rms L}\mu_1 - \cos(2\sigma_E^\rms L + \gamma_1)\eqb_1}
{\cos(2\sigma_E^\rms L + \gamma_1)\eqb'_1 - \sin(2\sigma_E^\rms L + \gamma_1)\eqb_1\gamma_1'}\right)^2 + 
\rme^{-2\kappa_E^\rmu L}\cos(2\sigma_E^\rmu L + \beta_2)\eqa_2.
\]
Solving $\partial_{\mu_1}\mu_2=0$ yields the turning points
\[
\mu_1=\mu_* := \rme^{-2\kappa_E^\rms L}\cos(2\sigma_E^\rms L + \gamma_1)\eqb_1,
\]
which give the location of a fold curve of the solution sheet in the parameter plane.

For each non-resonant sequence of $L$-values the parameter locations are analogous to those of $1$-homoclinic orbits to $E$.  If $\sigma_E^\rms = \sigma_E^\rmu =0$ there are no resonances and the monotone $\mu_*(L)$ provides all folds.
Otherwise, for each resonant value of $L$, note that $\mu_2$ is quadratic in $v$ with critical point at $v=0$, which means $\mu_1=\mu_*$. Hence, to leading order, fold curves are given on the one hand via $\mu_*(L)$, and on the other hand via $\mu_2(v)$ at $\mu_1=\mu_*(L)$ for resonant $L$.
A more detailed description of the solution set for $n=3$ is given in \cite{krock}.

\end{document}